\numberwithin{equation}{section}
\newcommand{\q}{{\mathbb{Q}}}
\newcommand{\p}{{\mathbb{P}}}
\newcommand{\R}{{\mathbb{R}}}
\def\div{ \hbox{\rm div}\,  }
\newcommand{\Z}{{\mathbb{Z}}}
\def\ta{\varphi}
\def\La{ \Lambda }
\def\nn{\nonumber}
\def\R{{\mathbb R}}
\def\div{ \hbox{\rm div}\,  }
\def\Z{{\mathbb{Z}}}
\def\u{\mathbf{u}}
\def\v{\mathbf{v}}
\def\w{\mathbf{w}}
\def\La{\Lambda}
\def\ddj{\dot \Delta_j}
\def\e{\mathcal{E}_\infty}
\def\ee{\mathcal{E}_1}
\def\er{\mathcal{D}_1(t)}
\def\es{\mathcal{D}_\infty(t)}
\newcommand{\norm}[1]{\lVert #1 \rVert}
\theoremstyle{plain}
\newtheorem{theorem}{Theorem}[section]
\newtheorem{lemma}[theorem]{Lemma}
\newtheorem{definition}[theorem]{Definition}
\newtheorem{proposition}[theorem]{Proposition}
\newtheorem{remark}[theorem]{Remark}
\numberwithin{equation}{section}
\begin{document}
\title[Viscous and non-resistive MHD equations ]{Global small solutions to a special $2\frac12$-D compressible viscous non-resistive MHD system}

\author[B. Dong]{Boqing Dong}
\address[B. Dong]{School  of Mathematics and Statistics, Shenzhen University,
 Shenzhen, 518060, China.} \email{bqdong@szu.edu.cn}
\author[J. Wu]{Jiahong Wu}
\address[J. Wu]{Department of Mathematics, Oklahoma State University, 401 Mathematical Sciences, Stillwater,
OK 74078, USA. } \email{jiahong.wu@okstate.edu}
 \author[X. Zhai]{Xiaoping Zhai}
\address[X. Zhai]{School of Mathematics and Statistics, Guangdong University of Technology,
Guangzhou, 510520, China.} \email{zhaixp@szu.edu.cn (Corresponding author)}

\subjclass[2020]{35Q35, 35A01, 35A02, 76W05}

\keywords{Global solutions; Non-resistive compressible MHD; Decay rates}

\begin{abstract}
This paper solves the global well-posedness and stability problem on a special
$2\frac12$-D compressible viscous non-resistive MHD system near a steady-state solution.
The steady-state here consists of a positive constant density and a background magnetic field.
The global solution is constructed in $L^p$-based homogeneous Besov spaces, which allow general
and highly oscillating initial velocity. The well-posedness problem studied here is extremely
challenging due to the lack of the magnetic diffusion, and remains open for the
corresponding 3D MHD equations. Our approach exploits the enhanced dissipation and
stabilizing effect resulting from the background magnetic field, a phenomenon
observed in physical experiments.  In addition, we obtain the solution's optimal decay rate
when the initial data is further assumed to be in a Besov space of negative index.
	\end{abstract}

\maketitle

\tableofcontents
\section{Introduction and the main results}

The small data global well-posedness problem on the three-dimensional (3D) compressible
viscous non-resistive magnetohydrodynamic (MHD) equations remains an challenging open
problem. Mathematically the concerned MHD equations are given by
\begin{eqnarray}\label{mm1}
	\left\{\begin{aligned}
		&\partial_t\rho+\div(\rho \v)=0,\\
		&\rho(\partial_t\mathbf{v} + \v\cdot \nabla \v) -  \mu \Delta \v -(\lambda+\mu) \nabla \div \v+\nabla P=  (\nabla\times \mathbf{B})\times \mathbf{B},\\
		&\partial_t\mathbf{B}-\nabla\times(\v\times\mathbf{B})=0 ,\\
		&\div\mathbf{B}=0,
	\end{aligned}\right.
\end{eqnarray}
where  $\rho$ denotes the density of the fluid, $\v$  the  velocity field,  and $\mathbf{B}$ the magnetic field. The parameters $\mu$  and $\lambda$ are shear viscosity and volume viscosity coefficients,  respectively,  which satisfy the standard  strong parabolicity assumption,
\begin{align*}
	\mu>0\quad\hbox{and}\quad
	\nu\stackrel{\mathrm{def}}{=}\lambda+2\mu>0.
\end{align*}
The  pressure  $P=A\rho^\gamma$ for some $A>0$ and $\gamma\ge1$. The
compressible MHD equations model the motion of electrically conducting fluids in the presence of
a magnetic field. The compressible MHD  equations  can be derived from the isentropic Navier-Stokes-Maxwell system by taking
the zero dielectric constant limit \cite{lifucai}. When the effect of the magnetic field can be neglected or $\mathbf{B} = 0$,  \eqref{mm1} reduces to the isentropic compressible Navier-Stokes equations.

\vskip .1in
The goal of this paper is to solve the small data global well-posedness problem on a very special
two-and-half-dimensional ($2\frac12$-D) compressible
viscous non-resistive MHD equations (to be specified later). In addition,
we are also interested in the precise large-time behavior of the solutions.

\vskip .1in
Due to its wide physical applications and mathematical challenges, the compressible MHD equations
have attracted the interests of many
 physicists and mathematicians (see, e.g., \cite{biandongfen,Davidson,DJJ,Ducomet,fanjishan,FNS,HCH,HT,wuyifei,zhushengguo,zhongxin}  and the references therein). We briefly recall some results concerning the multi-dimensional barotropic compressible MHD equations, which are closely related to our investigation here.
Ducomet and Feireisl \cite{Ducomet} considered the heat-conducting fluids together with the influence of radiation, and obtained the global existence of weak solutions with finite energy initial data. Hu and Wang \cite{huxianpeng} proved the global existence of weak solutions to the 3D isentropic compressible MHD system via the Lions-Feireisl theory, see \cite{Lions} and \cite{Feireisl}.
We remark that there are  essential differences between the vacuum case and the non-vacuum case.
The global weak solution in the case of vacuum was obtained in the work of Li, Xu and Zhang \cite{lihailiang}. The local well-posedness in the framework of critical Besov spaces was shown
by Bian and Yuan \cite{biandongfen} when there is full dissipation and no vacuum. In the case of
vacuum and no magnetic diffusion, Li, Su and Wang \cite{wangdehua} proved the local
existence and uniqueness of strong solutions.
The small data global well-posedness problem is extremely difficult when there is
no magnetic diffusion.
There are some satisfactory results in the simplified 1D
geometry. Jiang and Zhang \cite{jiang2017} proved the existence and uniqueness of global strong solution
to the isentropic case with large initial data. We refer to \cite{li23}, \cite{li24}, \cite{li25} for more results in 1D concerning isentropic and heat-conductive non-resistive MHD system with large initial data.
 Wu and Wu \cite{wuyifei} presented a systematic approach to the
small data global well-posedness and
stability problem on the 2D compressible non-resistive MHD equations if the initial data close to an equilibrium state, especially with a background magnetic field.
 It appears difficult to extend the approach of \cite{wuyifei}
to $\R^3$.
 There are
some differences between 2D case and 3D case. For 2D case, when
applying $\nabla$ on equations, there will appear at least one good part in nonlinear terms. For
example, $\partial_1\v\cdot\nabla \mathbf{B}=\partial_1\v_1\partial_1 \mathbf{B}+\partial_1\v_2\partial_2 \mathbf{B}$  and $\partial_2\v\cdot\nabla \mathbf{B}$ (coming from $\nabla\v\cdot\nabla \mathbf{B}$) always contain
a strong dissipative part. However, this will not hold for 3D case.
 Tan and Wang \cite{TW} obtained the global existence of smooth solutions to the 3D compressible barotropic viscous non-resistive MHD system in the horizontally infinite flat layer $\Omega=\R^2\times(0,1).$
Initial- and boundary-value problems under some additional compatibility conditions for the 3D compressible MHD equations were examined by Fan and Yu \cite{fanjishan} and local solutions were
obtained even when there is a vacuum. Zhu \cite{zhushengguo} extended the result obtained in  \cite{wangdehua} to the case
of allowing non-negativity of the initial density. We mention that
there are many interesting results on the zero Mach limit results on the incompressible MHD equations
(see, e.g.,\cite{DJJ, HW3, FNS,JJL1, Li, lifucai2}).

\vskip .1in
If we neglect the effect of the magnetic field, the system \eqref{mm1} reduces to the compressible Navier-Stokes equations, which have also been studied by many researchers, see  \cite{danchin2000}, \cite{danchin2014},   \cite{helingbing}, \cite{xujiang}, \cite{xin6}, \cite{xin8}, \cite{xin3}, \cite{xin5}, \cite{xin7}, \cite{xin2}, \cite{zhaixiaoping} and the references therein.

\vskip .1in
Although the small data global well-posedness on the 2D compressible MHD equations
without magnetic diffusion has been successfully settled, this same problem on the 3D counterpart
appears to be inaccessible at this moment. This paper focuses on a very special $2\frac12$-D  compressible MHD system. The motion of fluids takes place in
the plane $\R^2$ while the magnetic field acts on fluids only in the vertical direction, namely
\begin{align*}
&\v=(\v^1(t,x_1,x_2),\v^2(t,x_1,x_2),0)\stackrel{\mathrm{def}}{=}(\u,0),\nn\\
&\rho\stackrel{\mathrm{def}}{=}\rho(t,x_1,x_2),\quad\mathbf{B}\stackrel{\mathrm{def}}{=}(0,0,m(t,x_1,x_2)).
\end{align*}
Then \eqref{mm1} is reduced to
\begin{eqnarray}\label{0mm2}
\left\{\begin{aligned}
&\partial_t\rho+\div(\rho \u)=0,\\
&\rho(\partial_t\mathbf{u} + \u\cdot \nabla \u) -  \mu \Delta \u -(\lambda+\mu) \nabla \div \u+\nabla P +\frac12\nabla m^2=  0,\\
&\partial_tm+\div(m \u)=0 .
\end{aligned}\right.
\end{eqnarray}
Clearly $(\rho^{(0)}, \u^{(0)}, m^{(0)})$ with
$$
\rho^{(0)}=1, \quad \u^{(0)}={0}, \quad  m^{(0)}=1
$$
solves (\ref{0mm2}). We intend to understand the well-posedness and stability problem on the system
governing the perturbation $(a, \u, b)$, where
$$
a =\rho -1, \quad b=m-1.
$$
It is easy to check that $(a, \u, b)$ satisfies
\begin{eqnarray}\label{333mm3}
\left\{\begin{aligned}
&\partial_ta+\div \u+\u\cdot\nabla a+a\,\div \u=0,\\
&\partial_t\u + \u\cdot \nabla \u -  \mu\Delta \u - (\lambda+\mu)\nabla \div \u+\nabla P(1+a) +\frac12\nabla {(b+1)^2}=\mathbf{M}(a,\u,{b}),\\
&\partial_t{b}+\div \u+\u\cdot\nabla {b}+{b}\,\div \u=0,\\
&(a,\u,b)|_{t=0}=(a_0,\u_0,b_0),
\end{aligned}\right.
\end{eqnarray}
with
\begin{align}\label{}
\mathbf{M}(a,\u,{b})\stackrel{\mathrm{def}}{=}&\frac{a}{1+a}(\nabla P(1+a) +\frac12\nabla {(b+1)^2}-(\mu\Delta \u+(\lambda+\mu)\nabla\div \u)).
\end{align}
As the first step of our main results, we provide a local well-posedness result in the Besov space.
\begin{proposition}(Local well-posedness)\quad \label{dingli1}
Let $1<p<4$. Assume  $\u_0\in \dot{B}_{p,1}^{\frac 2p-1}(\R^2)$,  $(a_0,b_0)\in \dot{B}_{p,1}^{\frac 2p}(\R^2)$ with $1 + a_0$ bounded away from zero. Then there exists a positive time
$T$ such that the system \eqref{333mm3} has a unique solution $(a, \u, b)$ satisfying
\begin{align*}
&(a,b)\in C([0,T ];{\dot{B}}_{p,1}^{\frac {2}{p}}),\quad \u\in C([0,T ];{\dot{B}}_{p,1}^{\frac {2}{p}-1})\cap L^{1}
([0,T];{\dot{B}}_{p,1}^{\frac 2p+1}).
\end{align*}
\end{proposition}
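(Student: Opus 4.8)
The plan is to follow the by-now classical scheme of Danchin for the isentropic compressible Navier--Stokes equations in critical Besov spaces, the new feature being that the magnetic perturbation $b$ obeys exactly the same transport structure as the density perturbation $a$ (compare the first and third equations of \eqref{333mm3}); hence $(a,b)$ can be propagated together by transport estimates while $\u$ is governed by a parabolic Lam\'e equation. Accordingly I work in
\[
E_T \stackrel{\mathrm{def}}{=} \big(C([0,T];\dot{B}_{p,1}^{\frac2p})\big)^{2}\times\big(C([0,T];\dot{B}_{p,1}^{\frac2p-1})\cap L^1([0,T];\dot{B}_{p,1}^{\frac2p+1})\big),
\]
and construct the solution by successive approximations: given $(a^n,\u^n,b^n)$, let $a^{n+1},b^{n+1}$ solve the linear transport equations $\partial_t a^{n+1}+\u^n\cdot\nabla a^{n+1}=-(1+a^n)\div\u^n$ (and likewise for $b^{n+1}$), and let $\u^{n+1}$ solve $\partial_t\u^{n+1}-\mu\Delta\u^{n+1}-(\lambda+\mu)\nabla\div\u^{n+1}=F(a^n,\u^n,b^n)$, where $F$ gathers $-\u^n\cdot\nabla\u^n$, $-\nabla(P(1+a^n)-P(1))$, $-\tfrac12\nabla((b^n+1)^2-1)$ and $\mathbf{M}(a^n,\u^n,b^n)$. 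The iteration is started from the free solutions of the decoupled linear equations with data $(a_0,\u_0,b_0)$ (truncated in low frequency if necessary so that the first iterate already lies in $E_T$).

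Two linear ingredients are needed. First, the maximal $L^1_T$-regularity estimate for the Lam\'e system,
\[
\norm{\u}_{L^\infty_T\dot{B}_{p,1}^{\frac2p-1}}+\mu\norm{\u}_{L^1_T\dot{B}_{p,1}^{\frac2p+1}}\lesssim \norm{\u_0}_{\dot{B}_{p,1}^{\frac2p-1}}+\norm{F}_{L^1_T\dot{B}_{p,1}^{\frac2p-1}},
\]
which gains two derivatives in $L^1_T$. Second, the transport estimate in the borderline space $\dot{B}_{p,1}^{\frac2p}$,
\[
\norm{a}_{L^\infty_T\dot{B}_{p,1}^{\frac2p}}\le\Big(\norm{a_0}_{\dot{B}_{p,1}^{\frac2p}}+\norm{g}_{L^1_T\dot{B}_{p,1}^{\frac2p}}\Big)\exp\Big(C\int_0^T\norm{\nabla\u^n}_{\dot{B}_{p,1}^{\frac2p}}\,d\tau\Big),
\]
the exponent being finite since $\int_0^T\norm{\nabla\u^n}_{\dot{B}_{p,1}^{\frac2p}}\,d\tau\le\norm{\u^n}_{L^1_T\dot{B}_{p,1}^{\frac2p+1}}$; both are standard (see Danchin and Bahouri--Chemin--Danchin).

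For the nonlinear terms I use that $\dot{B}_{p,1}^{\frac2p}(\R^2)$ is an algebra, the product law $\dot{B}_{p,1}^{\frac2p-1}\cdot\dot{B}_{p,1}^{\frac2p}\hookrightarrow\dot{B}_{p,1}^{\frac2p-1}$ — valid precisely because $(\tfrac2p-1)+\tfrac2p=\tfrac4p-1>0$, which is where the bound $p<4$ enters — and the composition estimates $\norm{P(1+a)-P(1)}_{\dot{B}_{p,1}^{\frac2p}}\lesssim C(\norm{a}_{L^\infty})\norm{a}_{\dot{B}_{p,1}^{\frac2p}}$ and $\norm{\tfrac{a}{1+a}}_{\dot{B}_{p,1}^{\frac2p}}\lesssim C(\norm{a}_{L^\infty})\norm{a}_{\dot{B}_{p,1}^{\frac2p}}$, legitimate since $\dot{B}_{p,1}^{\frac2p}\hookrightarrow L^\infty$ and $1+a$ is bounded away from zero. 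Writing $\tfrac12\nabla((b+1)^2-1)=\nabla b+b\nabla b$, the quadratic pieces are absorbed by the algebra while the linear piece only contributes $\norm{b}_{L^1_T\dot{B}_{p,1}^{\frac2p}}\le T\norm{b}_{L^\infty_T\dot{B}_{p,1}^{\frac2p}}$; likewise $\norm{\u\cdot\nabla\u}_{L^1_T\dot{B}_{p,1}^{\frac2p-1}}\lesssim\norm{\u}_{L^\infty_T\dot{B}_{p,1}^{\frac2p-1}}\norm{\u}_{L^1_T\dot{B}_{p,1}^{\frac2p+1}}$ and the terms of $\mathbf{M}$ are handled by the same product and composition laws. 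Inserting these bounds into the two linear estimates, one checks that for $T$ small (depending only on the data) the ball of radius $\approx 2\big(\norm{\u_0}_{\dot{B}_{p,1}^{\frac2p-1}}+\norm{(a_0,b_0)}_{\dot{B}_{p,1}^{\frac2p}}\big)$ in $E_T$ is stable under $(a^n,\u^n,b^n)\mapsto(a^{n+1},\u^{n+1},b^{n+1})$; moreover $a^{n}\in C([0,T];\dot{B}_{p,1}^{\frac2p})$ with $\sup_{t\le T}\norm{a^n(t)-a_0}_{\dot{B}_{p,1}^{\frac2p}}$ small for $T$ small, so $1+a^n$ stays bounded away from zero uniformly in $n$, which closes the composition estimates.

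It remains to pass to the limit and to prove uniqueness. One shows that $(a^n,\u^n,b^n)$ is Cauchy in the weaker space $C([0,T];\dot{B}_{p,1}^{\frac2p-1})$ for $(a,b)$ and $C([0,T];\dot{B}_{p,1}^{\frac2p-2})\cap L^1([0,T];\dot{B}_{p,1}^{\frac2p})$ for $\u$, the differences solving linear transport/Lam\'e equations whose sources are controlled by the uniform bounds with a factor tending to $0$ as $T\to0$; the limit belongs to $E_T$ by Fatou's lemma and solves \eqref{333mm3}, and the same difference estimate yields uniqueness. The main obstacle — already the delicate point for compressible Navier--Stokes — is that the $a$- and $b$-equations are of transport type and therefore lose one derivative relative to $\u$: this rules out a naive Picard contraction at the critical level, forces the borderline transport estimate and the sharp product law above, and is also why the convergence and uniqueness must be run one derivative below the existence level, where for $p\ge2$ an additional refinement (exploiting the extra time-integrability of $\u$, after Danchin) is required to absorb the term $\delta\u\cdot\nabla a$ in the equation for the density difference.
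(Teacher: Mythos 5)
Your proposal is correct in spirit but takes a genuinely different route from the paper. You use the classical \emph{Eulerian} iteration scheme --- transport estimates for $(a,b)$ in $\dot B^{2/p}_{p,1}$ combined with maximal $L^1_T$-regularity for the Lam\'e system governing $\u$, with the contraction carried out one derivative below the critical level. The paper instead follows Danchin's \emph{Lagrangian} method (Section 3): after changing variables along the flow map $X(t,y)$, the relations $J_{\bar\u}\bar\rho\equiv\rho_0$ and $J_{\bar\u}\bar m\equiv m_0$ turn the density and the magnetic field into explicit algebraic functions of the Lagrangian velocity $\bar\u$, so the whole system collapses to a single quasilinear Lam\'e-type equation for $\bar\u$; a standard contraction in the space $E_p(T)$ of velocity fields then produces both existence and uniqueness simultaneously, after which one returns to Eulerian variables. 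The Lagrangian route removes the transport equations (and hence the derivative-loss problem) at the source, so that the iteration and the uniqueness are handled at a single regularity level. Your Eulerian route places all the weight precisely on the point you flag as "the main obstacle": proving convergence and uniqueness one derivative below, where the term $\delta\u\cdot\nabla a$ needs the sharp product law (indices summing to $4/p-1>0$, hence $p<4$) together with the extra time-integrability of $\u$ at intermediate regularity. You correctly identify this refinement but leave it as a signpost rather than carrying it out; in the Eulerian framework this step is genuinely delicate for $p\ge 2$ (for compressible Navier--Stokes, uniqueness in the critical $L^p$ setting was historically what motivated the switch to Lagrangian coordinates in the first place), so as written the uniqueness part of your argument is incomplete, even though it can in principle be completed along the lines you indicate. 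The paper's Lagrangian proof avoids this issue entirely, at the price of having to verify the nonlinear estimates for the flow-map quantities $A_{\bar\u}$, $\mathscr A_{\bar\u}$, $J_{\bar\u}$, which it borrows from Danchin (2014) and Xu.
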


Before stating our main results, we introduce some notation.
Let $\mathcal{S}(\R^2)$ be the Schwartz space on $\R^2$ and $\mathcal{S}'(\R^2)$ be its dual
space. For any $z \in\mathcal{S}'(\R^2)$,
the lower and higher frequency parts are expressed as\footnote{Note that for technical reasons, we need a small
overlap between low and high frequency.}
\begin{align*}
z^\ell\stackrel{\mathrm{def}}{=}\sum_{j\leq j_0}\ddj z\quad\hbox{and}\quad
z^h\stackrel{\mathrm{def}}{=}\sum_{j\ge j_0-1}\ddj z
\end{align*}
for some fixed   integer $j_0$ (the value of $j_0$ is fixed in  the proofs of the main theorems).
The corresponding truncated semi-norms are defined  as follows:
\begin{align*}
\norm{z}^\ell_{\dot B^{s}_{p,1}}
\stackrel{\mathrm{def}}{=}  \norm{z^{\ell}}_{\dot B^{s}_{p,1}}
\ \hbox{ and }\   \norm{z}^{h}_{\dot B^{s}_{p,1}}
\stackrel{\mathrm{def}}{=}  \norm{z^{h}}_{\dot B^{s}_{p,1}}.
\end{align*}
Let $\mathbb P= I -\nabla\Delta^{-1} \nabla\cdot$ be the projection onto the divergence-free vector
fields and $\mathbb Q= I -\mathbb P=  \nabla\Delta^{-1} \nabla\cdot$.

The small data global well-posedness and stability result on (\ref{333mm3}) is stated in the
following theorem.
\begin{theorem}(Global  well-posedness)\quad \label{dingli2}
Let   $2\le p<4.$
 For any  $(a_0^\ell,\q\u_0^\ell,b_0^\ell)\in \dot{B}_{2,1}^{0}(\R^2)$,    $(a^h_0, b_0^h)\in \dot{B}_{p,1}^{\frac 2p}(\R^2)$ and $(\p\u_0,\q\u_0^h)\in \dot{B}_{p,1}^{\frac 2p-1}(\R^2)$,
 there
exists a positive constant $c_0$ such that,  if
\begin{align}\label{smallness}
\norm{(a^{\ell}_0,\q\u^{\ell}_0,{b}^{\ell}_0)}_{\dot{B}^{0}_{2,1}}
+\norm{(a^h_0,{b}^h_0)}_{\dot{B}^{\frac {2}{p}}_{p,1}}+\norm{(\p\u_0,\q\u^h_0)}_{\dot{B}^{\frac {2}{p}-1}_{p,1}}\leq c_0,
\end{align}
then
the system \eqref{333mm3} has a unique global solution $(a,\u,b)$ so that
\begin{align*}
&(a^\ell,b^\ell)\in C_b(\R^+;{\dot{B}}_{2,1}^{0}),\quad
 (a^h,b^h)\in C_b(\R^+;{\dot{B}}_{p,1}^{\frac {2}{p}}),\\
&\q\u^\ell\in C_b(\R^+;{\dot{B}}_{2,1}^{0}\cap L^{1}
(\R^+;{\dot{B}}_{2,1}^{2}),\quad (\p\u,\q\u^h)\in C_b(\R^+;{\dot{B}}_{p,1}^{\frac {2}{p}-1}\cap L^{1}
(\R^+;{\dot{B}}_{p,1}^{\frac 2p+1}).
\end{align*}
Moreover,  there exists some constant $C$ such that
\begin{align}\label{xiaonorm}
&\mathcal{X}(t)\leq Cc_0,
\end{align}
where
\begin{align*}
\mathcal{X}(t)\stackrel{\mathrm{def}}{=}&
\norm{(a^{\ell},\q\u^{\ell},{b}^{\ell})}_{\widetilde{L}^{\infty}_t(\dot{B}^{0}_{2,1})}
+\norm{(a^h,{b}^h)}_{\widetilde{L}^{\infty}_t(\dot{B}^{\frac {2}{p}}_{p,1})}
+\norm{(\p\u,\q\u^h)}_{\widetilde{L}^{\infty}_t(\dot{B}^{\frac {2}{p}-1}_{p,1})}
\notag\\
&+\norm{(\ta^{\ell},\q\u^{\ell})}_{L^1_t(\dot{B}^{2}_{2,1})}
+\norm{\varphi^h}_{L^{1}_t(\dot{B}^{\frac {2}{p}}_{p,1})}
+\norm{(\p\u,\q\u^h)}_{L^1_t(\dot{B}^{\frac {2}{p}+1}_{p,1})}.
\end{align*}
One refers to \eqref{tade} for the definition of $\varphi$.
\end{theorem}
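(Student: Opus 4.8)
The proof is a global continuation argument resting on the local theory of Proposition~\ref{dingli1}. Since that proposition already supplies a unique local-in-time solution in the relevant Besov classes, it suffices to establish the a priori bound \eqref{xiaonorm} on any interval $[0,T]$ on which the solution is known to exist; a standard bootstrap --- using the smallness of $c_0$, the continuity of $t\mapsto\mathcal{X}(t)$, and the usual continuation criterion from the local theory --- then produces a global solution with $\mathcal{X}(t)\le Cc_0$, the time-continuity in each space being routine from the equations. Concretely, the goal is an estimate of the shape $\mathcal{X}(t)\le C\bigl(c_0+\mathcal{X}(t)^2+\mathcal{X}(t)^3\bigr)$, which traps $\mathcal{X}(t)$ below $2Cc_0$ once $c_0$ is small.

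The first step is to recast \eqref{333mm3} in a form adapted to $\mathcal{X}(t)$. Because $a$ and $b$ solve the same equation, the difference $a-b$ is transported by $\u$ with only the amplification term $(a-b)\div\u$ and no source, so it inherits no smoothing but also no growth beyond the flow; the combination that actually drives the momentum balance is the quantity $\varphi$ of \eqref{tade} --- essentially $P'(1)a+b$ up to the nonlinear correction coming from $\tfrac12(b+1)^2$ --- which satisfies a continuity-type equation $\partial_t\varphi+(P'(1)+1)\div\u=(\text{convective and quadratic terms})$. Writing $\u=\mathbb{P}\u+\mathbb{Q}\u$, the linear part of the system then splits into a heat equation for $\mathbb{P}\u$, a damped barotropic compressible Navier--Stokes type system for $(\varphi,\mathbb{Q}\u)$, and pure transport equations for $a$ and for $b$. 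In this reduced geometry the magnetic field enters only as an extra, sign-definite pressure and there is no Alfv\'en-wave coupling; the advertised ``stabilizing effect'' is precisely that this effective pressure stays coercive.

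The core of the proof is a family of frequency-localized linear estimates summed against the hybrid weights of $\mathcal{X}(t)$. For $\mathbb{P}\u$ one uses $L^1_t$ maximal regularity of the heat semigroup in $\dot B^{\frac2p-1}_{p,1}$, uniformly in frequency. For $(\varphi,\mathbb{Q}\u)$ one separates low and high frequencies. At low frequency the linearized operator is parabolic-like (eigenvalues $\sim-c|\xi|^2$), so a dyadic-block energy functional with a suitable $\varphi$--$\mathbb{Q}\u$ cross term produces the $2^{2j}$-weighted dissipation recorded in $\norm{(\varphi,\mathbb{Q}\u)^{\ell}}_{L^1_t(\dot B^2_{2,1})}$. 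At high frequency one passes to the effective velocity $w=\mathbb{Q}\u+\nu^{-1}\nabla(-\Delta)^{-1}\varphi$: then $w$ obeys a damped heat equation and hence gains two derivatives just like $\mathbb{Q}\u$, while $\varphi$ obeys $\partial_t\varphi+\kappa\varphi=-\kappa\,\div w+(\text{nonlinear})$ for a fixed $\kappa>0$, so the exponential-in-time damping together with the fact that $\div w$ still lands in $L^1_t(\dot B^{\frac2p}_{p,1})$ (one of $w$'s two gained derivatives covers the divergence) yields $\varphi^h$ in \emph{both} $\widetilde L^\infty_t(\dot B^{\frac2p}_{p,1})$ and $L^1_t(\dot B^{\frac2p}_{p,1})$ --- at the same regularity, with no gain, exactly as $\varphi^h$ appears in $\mathcal{X}(t)$. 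For $a$ and $b$ one invokes the Besov transport estimate, whose exponential factor $\exp\!\bigl(C\norm{\nabla\u}_{L^1_t(\dot B^{\frac2p}_{p,1})}\bigr)$ is harmless once $\mathcal{X}(t)$ is small. Finally, all nonlinearities --- $\u\cdot\nabla\u$, $\u\cdot\nabla a$, $a\,\div\u$, $\mathbf{M}(a,\u,b)$, the quadratic part of the pressure, etc.\ --- are estimated by paraproduct and product laws in the hybrid setting: low frequencies in the $L^2$-based spaces, high in the $L^p$-based spaces, the two matched via $p\ge2$ (Bernstein inequalities) and $p<4$ (which is exactly what makes $\dot B^{\frac2p-1}_{p,1}\cdot\dot B^{\frac2p}_{p,1}\hookrightarrow\dot B^{\frac2p-1}_{p,1}$ valid). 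Every such term is at least quadratic in $\mathcal{X}(t)$, which closes the estimate.

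I expect the main obstacle to be the high-frequency control of $\varphi$: the momentum--continuity equation has no regularizing mechanism of its own, so the time-integrable bound $\varphi^h\in L^1_t(\dot B^{\frac2p}_{p,1})$ must be manufactured entirely out of the coupling with $\mathbb{Q}\u$, and carrying this out in the $L^p$ framework --- where the natural energy estimates are $L^2$-based --- requires the effective-velocity reformulation above together with a delicate matching of regularity indices across the low/high-frequency threshold. The secondary, more bookkeeping-heavy difficulty is propagating the hybrid norm through each nonlinear product estimate while tracking which Lebesgue exponent and which regularity index governs each dyadic range.
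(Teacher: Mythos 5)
Your proposal follows essentially the same route as the paper: the good unknown $\varphi$ of \eqref{tade} turning \eqref{333mm3} into a barotropic compressible Navier--Stokes type system for $(\varphi,\mathbb{Q}\u)$ plus a heat equation for $\mathbb{P}\u$; a dyadic energy functional with a $\varphi$--$\mathbb{Q}\u$ cross term at low frequencies (the paper's $\mathcal{L}_k^2$ in Lemma \ref{dipinlemma}); the effective velocity at high frequencies (your $w$ is exactly the paper's $\mathbf{G}=\mathbb{Q}\u-\frac12\Delta^{-1}\nabla\varphi$, yielding the damped equation \eqref{han4} for $\varphi^h$); hybrid $L^2$/$L^p$ product laws with $2\le p<4$; and the bootstrap closing $\mathcal{X}(t)\lesssim \mathcal{X}_0+\mathcal{X}(t)^2(1+\mathcal{X}(t))$.

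One step is stated too loosely to work as written: bounding $a^{\ell}$ and $b^{\ell}$ in $\widetilde L^{\infty}_t(\dot B^0_{2,1})$ by ``invoking the Besov transport estimate'' on their own equations fails, because each equation carries the \emph{linear} source $\div\u$, and at low frequencies $\norm{(\div\u)^{\ell}}_{L^1_t(\dot B^0_{2,1})}$ is not controlled by $\mathcal{X}(t)$ (you only have $\mathbb{Q}\u^{\ell}\in L^1_t(\dot B^2_{2,1})$, and for $j\to-\infty$ the weight $2^{2j}$ gives no control of the $\dot B^0_{2,1}$ norm of $\nabla\u^{\ell}$). The paper breaks this barrier by introducing $\delta=\varphi-3a$ (see \eqref{ggug1}--\eqref{ggug12}): the linear $\div\u$ terms cancel, $\delta$ solves a transport equation with only quadratic sources, and then $a^{\ell}=\frac13(\varphi^{\ell}-\delta^{\ell})$ is recovered from the already-established low-frequency bound on $\varphi$; the same is done for $b$. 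Your own observation that $a-b$ is purely transported is the germ of exactly this cancellation, so the fix is available within your framework --- you just need to combine that transported quantity with $\varphi$ to solve for $a$ and $b$ individually, rather than estimating each transport equation separately.
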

\subsection{Strategy of the proof of Theorem  \ref{dingli2}}
Let us point out new ingredients in the proof of Theorem \ref{dingli2}. For usual compressible
Navier-Stokes equations (see for example \cite{danchin2000, helingbing}), the major difficulty stems from the
convection term in the density equation, as it may cause a loss of one derivative of the density. To overcome it, previous proofs heavily relied on a paralinearized version combined with a Lagrangian change of variables. For the compressible viscous non-resistive MHD system \eqref{333mm3}, the situation becomes more complicated.
There are absence of dissipation in  the density  equation and the magnetic field equation, we cannot get  any smoothing effect of the  density and the magnetic field. This bring us big difficulty to construct the global solutions of the system.
 The new ingredient in the present paper lies in the introduction of un unknown good function $\varphi$ (see \eqref{tade}),
which enables us to capture the dissipation arising from combination of density and the magnetic field.
Finally, we complete the proof of Theorem \ref{dingli2} by a continuous argument.

It is natural and physically important to study the large-time behavior of the global solution
obtained in \eqref{dingli2}. The large-time behavior has always been a prominent topic
on the fluid equations. Important
results have been established for the compressible Navier-Stokes equations (see, e.g.,
\cite{xujiang, xujiang2021jde, zhaixiaoping}) and the compressible MHD equations (see, e.g.,
\cite{huxianpeng, lifucai}).

\vskip .1in
What is special here is that the system concerned here is partially dissipated with
no damping or dissipation in the equations of $\rho$ and $b$.
We show that, when the low modes of the initial data are in a Besov space with suitable
negative index, then the Sobolev norm of the solution is shown to decay at an optimal rate.
The proof relies on the enhanced dissipation resulting from the interaction between the velocity
and the magnetic field.

\begin{theorem}$\mathrm{(}$Optimal decay$\mathrm{)}$\quad\label{dingli3}
Let $\Lambda^sz\stackrel{\mathrm{def}}{=} \mathcal{F}^{-1}(|\xi|^s\mathcal{F}z)(s\in\R)$
 and
 ~$(a,\u,b)$ be the global small solutions addressed by Theorem \ref{dingli2} with $p=2$. For any $0<\sigma\le1,$
if additionally the initial data satisfying $(a_0^\ell,\u_0^\ell,b_0^\ell)\in{\dot{B}_{2,\infty}^{-\sigma}}(\R^2)$, then we have the following time-decay rate
\begin{align}\label{huichen}
\norm{\Lambda^{\gamma_1} (\varphi,\u)}_{L^2}
\le C(1+t)^{-\frac{\gamma_1+\sigma}{2}},\quad\quad &\forall \gamma_1\in\left(-\sigma,0\right]
\end{align}
with $\varphi$ is defined in \eqref{tade}.
\end{theorem}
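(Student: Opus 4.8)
The plan is to reduce \eqref{huichen} to a linear decay estimate for a well‑chosen reformulation of \eqref{333mm3} combined with a nonlinear bootstrap, in the spirit of the critical $L^p$ decay analysis of the compressible Navier--Stokes equations, but organized around the good unknown $\varphi$ of \eqref{tade}. First I would rewrite the system as an evolution for $(\varphi,\u)$, schematically
\[
\partial_t\varphi+c_1\div\u=F_1,\qquad
\partial_t\q\u+\nu\Lambda^2\q\u-c_2\nabla\varphi=F_2,\qquad
\partial_t\p\u+\mu\Lambda^2\p\u=F_3,
\]
where $c_1,c_2>0$ depend on $\gamma,A$ and $F_1,F_2,F_3$ collect the genuinely quadratic contributions (convection, the quadratic part of the pressure and of $\tfrac12\nabla(b+1)^2$, and the $\tfrac{a}{1+a}$ factor in $\mathbf M(a,\u,b)$). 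The incompressible part $\p\u$ obeys a heat equation up to a quadratic source, while $(\varphi,\q\u)$ obeys a damped wave system whose Fourier symbol has eigenvalues $\sim-\tfrac{\nu}{2}|\xi|^{2}\pm i\sqrt{c_1c_2}\,|\xi|$ for $|\xi|$ small and $\sim-\nu|\xi|^{2},\,-c_1c_2/\nu$ for $|\xi|$ large. Consequently the associated semigroup $e^{t\mathcal L}$ satisfies, by a direct resolvent computation, a low–frequency bound $\norm{\Lambda^{s}e^{t\mathcal L}g^\ell}_{L^2}\lesssim(1+t)^{-\frac{s+\sigma}{2}}\norm{g}^\ell_{\dot B^{-\sigma}_{2,\infty}}$ for every $s\ge-\sigma$ (identical to the heat kernel), and exponential decay of the high‑frequency part. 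This is Step~1.

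Step~2 is to propagate a uniform‑in‑time bound on the negative Besov norm of the low frequencies, namely $\sup_{t\ge0}\norm{(a,\u,b)^\ell}_{\dot B^{-\sigma}_{2,\infty}}\lesssim c_0+\norm{(a_0,\u_0,b_0)^\ell}_{\dot B^{-\sigma}_{2,\infty}}$. For $\varphi$ and $\u$ this follows from the Duhamel formula, the linear bound of Step~1 with $s=-\sigma$ (no time gain), and the product laws $\dot B^{-\sigma}_{2,\infty}\cdot\dot B^{\frac2p}_{p,1}\hookrightarrow\dot B^{-\sigma}_{2,\infty}$ valid for $0<\sigma\le1$ on $\R^2$, together with the global smallness $\mathcal X(t)\le Cc_0$ of Theorem~\ref{dingli2}, which absorbs the nonlinear contribution. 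For the "slow" mode — the combination of $a$ and $b$ that $\varphi$ does not see — one uses that it is transported, $\partial_t(a-b)+\u\cdot\nabla(a-b)+(a-b)\,\div\u=0$, and the transport estimate in $\dot B^{-\sigma}_{2,\infty}$ with weight $\exp\!\big(C\!\int_0^t\norm{\nabla\u}_{\dot B^{\frac2p}_{p,1}}\big)\le C$, which gives a uniform bound (but no decay, consistent with $a-b$ being absent from \eqref{huichen}); together with the bound on $\varphi$ this controls $a^\ell$ and $b^\ell$ in $\dot B^{-\sigma}_{2,\infty}$.

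Step~3 is the bootstrap. I would introduce a time‑weighted functional of the form
\[
\mathcal D(t)\stackrel{\mathrm{def}}{=}\sup_{0\le\tau\le t}\Big[(1+\tau)^{\frac\sigma2}\norm{(\varphi,\u)^\ell(\tau)}_{\dot B^{0}_{2,1}}+(1+\tau)^{\frac{\sigma+1}{2}}\norm{(\varphi,\u)^\ell(\tau)}_{\dot B^{1}_{2,1}}+(1+\tau)^{N}\norm{(\varphi,\u)^h(\tau)}_{\dot B^{1}_{2,1}\cap\dot B^{0}_{2,1}}\Big],
\]
with $N$ large, and prove $\mathcal D(t)\le C\big(c_0+\norm{(a_0,\u_0,b_0)^\ell}_{\dot B^{-\sigma}_{2,\infty}}\big)$ by inserting the bounds of Steps~1--2 into the Duhamel formula: the high‑frequency part is handled by the exponential decay of Step~1 and the a priori bound $\norm{(\varphi,\u)^h}\lesssim c_0$ from $\mathcal X$; for the low‑frequency part one estimates the quadratic source $F_i$ simultaneously in $\dot B^{-\sigma}_{2,\infty}$ and in $\dot B^{0}_{2,1}$ using $\mathcal D$ and the product laws, so that $\norm{F_i^\ell(\tau)}\lesssim c_0(1+\tau)^{-\beta}$ with $\beta$ strictly above the integrability threshold, whence $\int_0^t(1+t-\tau)^{-\frac{s+\sigma}{2}}(1+\tau)^{-\beta}\,d\tau\lesssim(1+t)^{-\frac{s+\sigma}{2}}$ closes the estimate. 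Finally, \eqref{huichen} follows for $\gamma_1\in(-\sigma,0)$ by interpolating the uniform bound $\norm{(\varphi,\u)^\ell}_{\dot B^{-\sigma}_{2,\infty}}\lesssim1$ against $\norm{(\varphi,\u)^\ell}_{\dot B^{0}_{2,1}}\lesssim(1+t)^{-\sigma/2}$ (with $\theta=-\gamma_1/\sigma$ this yields exactly the exponent $\tfrac{\gamma_1+\sigma}{2}$), for $\gamma_1=0$ directly, while the high frequencies contribute a faster exponential rate.

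The step I expect to be the main obstacle is the nonlinear bookkeeping in Step~3 in two space dimensions: the naive bound $\norm{F_i}_{L^2}\lesssim\norm{(\varphi,\u)}_{L^2}^2\sim(1+t)^{-\sigma}$ is not time‑integrable when $\sigma\le1$, so one must exploit that every nonlinear term carries at least one derivative — making one factor a $\nabla\u$‑ or $\Lambda\varphi$‑type quantity with the faster rate $(1+t)^{-(\sigma+1)/2}$ — and, when this is still insufficient for small $\sigma$, measure the source in the negative space via estimates such as $\norm{uv}_{\dot B^{-\sigma}_{2,\infty}}\lesssim\norm{u}_{L^2}\norm{v}_{\dot B^{1-\sigma}_{2,1}}$ (and $\norm{uv}_{\dot B^{-1}_{2,\infty}}\lesssim\norm{u}_{L^2}\norm{v}_{L^2}$ at the endpoint $\sigma=1$). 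Balancing these choices against the absence of smoothing for $a$ and $b$ — the very difficulty that forced the introduction of $\varphi$ in Theorem~\ref{dingli2} — is where the real work lies; the remainder is the standard Duhamel/Gronwall machinery.
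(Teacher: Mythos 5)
Your outline is a legitimate strategy, but it is not the one the paper follows, and the step you defer to the end (``where the real work lies'') is precisely the step the paper's method is designed to avoid. The paper performs no spectral analysis and no Duhamel iteration. It extracts from the a priori estimates of Theorem \ref{dingli2} the differential inequality \eqref{sa3}; it proves your Step~2 as Proposition \ref{propagate} (by energy estimates on \eqref{ping3} and on the transported combination $\delta=\varphi-3a$, together with a nonlinear Gronwall argument, rather than by Duhamel — your observation that the ``slow'' mode is purely transported and hence bounded but non-decaying is exactly the role of $\delta$ there); and it then interpolates $\norm{(\varphi,\u)}^{\ell}_{\dot B^{0}_{2,1}}\le C\norm{(\varphi,\u)}_{\dot B^{-\sigma}_{2,\infty}}^{\alpha_1}\bigl(\norm{(\varphi,\u)}^{\ell}_{\dot B^{2}_{2,1}}\bigr)^{1-\alpha_1}$ with $\alpha_1=\tfrac{2}{2+\sigma}$ to turn \eqref{sa3} into the Lyapunov inequality \eqref{sa58}, $\tfrac{d}{dt}E+\widetilde c_0E^{1+2/\sigma}\le0$, whose explicit solution gives $E\lesssim(1+t)^{-\sigma/2}$ directly. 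Your final interpolation between $\dot B^{-\sigma}_{2,\infty}$ and $\dot B^{0}_{2,1}$ is then identical to the paper's. What the paper's route buys is that the nonlinearity has already been absorbed using only the smallness \eqref{sa2}, so there is no convolution integral and no integrability threshold to meet; what your route would buy, if completed, is sharper information (separate rates for each frequency block and for higher norms), at the price of the weighted bookkeeping.

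Two concrete points in your Step~3 would need repair. First, the weight $(1+\tau)^{N}$ with $N$ large on $\norm{(\varphi,\u)^h}$ is not attainable: the high-frequency Duhamel source contains terms such as $\dot{T}_{\Delta\u^{\ell}}(I(a))^{h}$ inside $\mathbf{F}(a,\u,\varphi)$, and $(a^{h},b^{h})$ never decay, so the high frequencies of $(\varphi,\u)$ inherit only the algebraic rate carried by the low-frequency velocity. This is harmless for \eqref{huichen}, since $\gamma_1\le0$ only requires the rate $(1+t)^{-\sigma/2}$ from the high block (compare \eqref{sa59}), but the functional should be stated accordingly. Second, for $\sigma\le\tfrac12$ even the ``one factor carries a derivative'' improvement gives a source decaying like $(1+\tau)^{-\sigma-\frac12}$, which is not integrable, so closing the convolution estimate genuinely requires the splitting of the time integral with the source measured in $\dot B^{-\sigma}_{2,\infty}$ on the far piece; you have named this device but not executed it, and it is the entire content of the argument in your formulation. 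The paper's Lyapunov/interpolation mechanism renders both issues moot.
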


\begin{remark}\label{yaya3}
The above decay rate \eqref{huichen} coincides with the  heat flows, thus it is optimal in some sense.
\end{remark}

Finally, we  mention the small data global well-posedness result for a closely related
system of inhomogeneous incompressible MHD equations. The general inhomogeneous incompressible MHD equations are of the form
\begin{eqnarray}\label{nmm1}
\left\{\begin{aligned}
&\partial_t\rho+\div(\rho \v)=0,\\
&\rho(\partial_t\mathbf{v} + \v\cdot \nabla \v) -  \mu \Delta \v +\nabla P=  (\nabla\times \mathbf{B})\times \mathbf{B},\\
&\partial_t\mathbf{B}-\nabla\times(\v\times\mathbf{B})=0,\\
&\div\v=\div\mathbf{B}=0,\\
&(\rho,\v,\mathbf{B})|_{t=0}=(\rho_0,\v_0,\mathbf{B}_0).
\end{aligned}\right.
\end{eqnarray}
If we set
\begin{align*}
&\v=(\v^1(t,x_1,x_2),\v^2(t,x_1,x_2),0)\stackrel{\mathrm{def}}{=}(\u,0),\nn\\
&\rho\stackrel{\mathrm{def}}{=}\rho(t,x_1,x_2),\quad\mathbf{B}\stackrel{\mathrm{def}}{=}(0,0,b(t,x_1,x_2)),
\end{align*}
then \eqref{nmm1} is reduced to
\begin{eqnarray}\label{nmm2}
\left\{\begin{aligned}
&\partial_t\rho+\div(\rho \u)=0,\\
&\rho(\partial_t\mathbf{u} + \u\cdot \nabla \u) -  \mu \Delta \u +\nabla P +\frac12\nabla b^2=  0,\\
&\partial_tb+\div(b \u)=0 ,\\
&\div\u=0,\\
&(\rho,\u,b)|_{t=0}=(\rho_0,\u_0,b_0).
\end{aligned}\right.
\end{eqnarray}

Different from the compressible MHD equations, the combination  $\Pi:= P+\frac12 b^2$ can be regarded as new pressure and the new system \eqref{nmm2} is decoupled into equations of $(\rho,\u)$ and the equation of $b$. We can solve the equations of $(\rho,\u)$ first and then get the solution of $b$ through the third equation of \eqref{nmm2}.
Now we write $\rho=1+a$, inspired by \cite{abidi}, \cite{xuhuan} and the previous well-posedness result on the compressible MHD equations, we obtain the following global well-posedness result on \eqref{nmm2}.
We shall not provide a detailed proof for this result.

\begin{theorem}\label{dingli4}
Let $p\in(1,4)$, $(a_0,\u_0,b_0)\in\dot{B}_{p,1}^{\frac{2}{p}}(\R^2)$
with $\div \u_0=0$ and  $1 + a_0$ bounded away from zero.  Then \eqref{nmm2} has a unique global solution $(a,\u,\nabla\Pi,b)$ such that for any $t>0$,
\begin{align*}
(a,b)\in& C(\R^+;\dot{B}_{p,1}^{\frac{2}{p}}(\R^2))\cap \widetilde{L}_t^\infty(\dot{B}_{p,1}^{\frac{2}{p}}(\R^2)),
\ \nabla\Pi\in L_t^1(\dot{B}_{p,1}^{\frac{2}{p}-1}(\R^2)),\nonumber\\
\u\in& C(\R^+;;\dot{B}_{p,1}^{\frac{2}{p}-1}(\R^2))\cap \widetilde{L}_t^\infty(\dot{B}_{p,1}^{\frac{2}{p}-1}(\R^2))
\cap L_t^1(\dot{B}_{p,1}^{\frac{2}{p}+1}(\R^2)).
\end{align*}
Moreover, we have
\begin{align*}
\|(a,b)\|_{\widetilde{L}_t^\infty(B_{p,1}^{\frac{2}{p}})}+\|\u\|_{\widetilde{L}_t^\infty(\dot{B}_{p,1}^{\frac{2}{p}-1})}+\|\u\|_{L_t^1(\dot{B}_{p,1}^{\frac{2}{p}+1})}
+\|\nabla\Pi\|_{L_t^1(\dot{B}_{p,1}^{\frac{2}{p}-1})}\leq C\exp\left(C\exp\big(Ct^\frac12\big)\right)
\end{align*}
for some time-independent constant $C$.
\end{theorem}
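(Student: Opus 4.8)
The proof rests on the decoupling of system \eqref{nmm2} already pointed out above. Introducing the effective pressure $\Pi:=P+\tfrac12 b^2$ and writing $\rho=1+a$, the triple $(a,\u,\nabla\Pi)$ solves the \emph{inhomogeneous incompressible Navier--Stokes equations}
\[
\partial_t\rho+\div(\rho\u)=0,\qquad \rho(\partial_t\u+\u\cdot\nabla\u)-\mu\Delta\u+\nabla\Pi=0,\qquad \div\u=0,
\]
in which $b$ no longer appears; meanwhile, since $\div\u=0$, the last equation of \eqref{nmm2} is the \emph{linear transport equation} $\partial_t b+\u\cdot\nabla b=0$. The plan is therefore: (i) solve the density--velocity subsystem globally in the critical framework $a_0\in\dot{B}^{\frac2p}_{p,1}$, $\u_0\in\dot{B}^{\frac2p-1}_{p,1}$ with $1+a_0$ bounded away from zero; (ii) transport $b_0$ by the velocity field just produced; (iii) assemble the bounds into the claimed double-exponential estimate.

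For step (i) I would appeal to---and, where necessary, adapt---the global well-posedness theory in $\R^2$ for density-dependent incompressible Navier--Stokes in critical $L^p$-Besov spaces \emph{without} smallness of the density fluctuation, following \cite{abidi,xuhuan}. The mechanism is by now standard: pass to Lagrangian coordinates generated by $\u$ so that the density becomes stationary, which removes the convection term $\u\cdot\nabla\rho$ and turns the momentum equation into a Stokes-type system with coefficients that are constant in time; then combine the maximal $L^1_t(\dot{B}^{\frac2p+1}_{p,1})$-regularity for that Stokes system with the control of $a_0$ in the multiplier algebra $\dot{B}^{\frac2p}_{p,1}$ (this is where $1<p<4$ enters, through the relevant product estimates). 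Because the dimension is two, one closes the a priori estimate \emph{globally}, at the price of a logarithmic-type interpolation inequality in the bound for $\norm{\nabla\u}_{L^1_t(L^\infty)}$, which is precisely the origin of the growth $\exp(C\exp(Ct^{1/2}))$; existence then follows from a standard approximation/compactness scheme and uniqueness from a Lagrangian (or paralinearized) stability estimate.

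For step (ii), with $\u\in L^1_{\mathrm{loc}}(\R^+;\dot{B}^{\frac2p+1}_{p,1}(\R^2))$ and $\div\u=0$ in hand, the problem $\partial_t b+\u\cdot\nabla b=0$, $b|_{t=0}=b_0\in\dot{B}^{\frac2p}_{p,1}$, is handled by the classical estimates for transport equations in homogeneous Besov spaces: the index $\frac2p$ is admissible when $r=1$, so $b\in C(\R^+;\dot{B}^{\frac2p}_{p,1})\cap\widetilde{L}^{\infty}_t(\dot{B}^{\frac2p}_{p,1})$ with
\[
\norm{b}_{\widetilde{L}^{\infty}_t(\dot{B}^{\frac2p}_{p,1})}
\le \norm{b_0}_{\dot{B}^{\frac2p}_{p,1}}\,\exp\!\Big(C\int_0^t\norm{\nabla\u(\tau)}_{\dot{B}^{\frac2p}_{p,1}}\,d\tau\Big)
\le \norm{b_0}_{\dot{B}^{\frac2p}_{p,1}}\,\exp\!\big(C\norm{\u}_{L^1_t(\dot{B}^{\frac2p+1}_{p,1})}\big),
\]
where we used $\dot{B}^{\frac2p}_{p,1}\hookrightarrow L^\infty$. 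Inserting the bound on $\norm{\u}_{L^1_t(\dot{B}^{\frac2p+1}_{p,1})}$ from step (i) yields the stated control of $b$; together with the estimates for $a$, $\u$, $\nabla\Pi$ from step (i) this gives the final inequality, and uniqueness of the full solution follows from uniqueness in (i) plus uniqueness for the linear transport equation.

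The main obstacle is entirely contained in step (i): establishing (or precisely quoting) the \emph{large-density} global well-posedness of 2D inhomogeneous Navier--Stokes in the $L^p$-critical setting, including the delicate pressure and maximal-regularity estimates and the two-dimensional global continuation argument with its double-exponential bound. This is exactly the point at which the paper relies on \cite{abidi,xuhuan} rather than reproducing the argument; once that input is granted, steps (ii)--(iii) are routine, which is why a detailed proof is omitted.
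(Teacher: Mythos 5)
Your proposal is correct and follows exactly the route the paper indicates: absorb $\tfrac12 b^2$ into the pressure $\Pi$ so that $(\rho,\u,\nabla\Pi)$ solves the 2D inhomogeneous incompressible Navier--Stokes system, invoke the large-data critical-space global theory of \cite{abidi,xuhuan} for that subsystem (the source of the $\exp(C\exp(Ct^{1/2}))$ bound), and then propagate $b_0\in\dot B^{2/p}_{p,1}$ by the linear transport equation $\partial_t b+\u\cdot\nabla b=0$ using $\nabla\u\in L^1_t(\dot B^{2/p}_{p,1})\hookrightarrow L^1_t(L^\infty)$. The paper deliberately omits the details and defers to the same references, so there is nothing to compare beyond noting the agreement.
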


The rest of this paper is arranged as follows. In the second section, we  recall some basic facts about Littlewood-Paley theory.
In the third section, we use the fixed point theorem  to outline the proof of Proposition \ref{dingli1}.
In the forth section, we use three subsections to prove Theorem \ref{dingli2}.
 In the first subsection,  we  exploit the special  structure of \eqref{333mm3} to capture the dissipation arising from combination of density and the magnetic field at low frequencies part and
 in the second subsection, we introduce
   a so called effective velocity to capture  the dissipation arising from combination of density and the magnetic field at high frequencies part, respectively.
 In the last subsection, we use the continuity argument to close  the energy estimates
and thus complete the proof of Theorem \ref{dingli2}.
We shall prove the Theorem \ref{dingli3} in Section 5. Inspired by the papers \cite{xujiang2021jde},
 our main task is to establish a Lyapunov-type inequality in time for energy norms
(see \eqref{sa58}) by using the pure energy argument (independent of spectral analysis).

\medskip
Let us introduce some  notations.
For two operators $A$ and $B$, we denote $[A,B]=AB-BA$, the commutator between $A$ and $B$.
The letter $C$ stands for a generic constant whose meaning is clear from the context.
We denote $\langle a,b\rangle$ the $L^2(\R^2)$ inner product of $a$ and $b$
 and write $a\lesssim b$ instead of $a\leq Cb$.
Given a Banach space $X$, we shall denote $\norm{(a,b)}_{X}=\norm{a}_{X}+\norm{b}_{X}$.

\bigskip
\section{ Preliminaries }

This section reviews Besov spaces and related facts to be used in the subsequent sections.
We start with the Littlewood-Paley decomposition.
To define it,  we fix a smooth radial non-increasing function $\chi$
supported in the ball $B(0,\frac 43)$ of $\R^2,$ and with value $1$ on $B(0,\frac34)$ such that, for
$\varphi(\xi)=\chi(\frac{\xi}{2})-\chi(\xi),$
$$
\qquad\sum_{j\in\Z}\varphi(2^{-j}\cdot)=1\ \hbox{ in }\ \R^2\setminus\{0\}
\quad\hbox{and}\quad \mathrm{Supp}\,\varphi\subset \Big\{\xi\in\R^2 : \frac34\leq|\xi|\leq\frac83\Big\}\cdotp
$$
The homogeneous dyadic blocks $\dot{\Delta}_j$ are defined on tempered distributions by
$$\dot{\Delta}_j u\stackrel{\mathrm{def}}{=}\varphi(2^{-j}D)u\stackrel{\mathrm{def}}{=}{\mathcal F}^{-1}(\varphi(2^{-j}\cdot){\mathcal F} u).
$$
For any homogeneous function $A$ of order 0 and smooth outside 0, we have
\begin{equation*}\label{}
\forall p\in[1,\infty],\quad\quad\|\ddj (A(D) u)\|_{L^p}\le C\|\ddj u\|_{L^p}.
\end{equation*}
\begin{definition}
Let $p,r$ be in~$[1,+\infty]$ and~$s$ in~$\R$, $u\in\mathcal{S}'(\R^2)$. We define the Besov norm by
$$
\|u\|_{{\dot{B}^s_{p,r}}}\stackrel{\mathrm{def}}{=}\big\|\big(2^{js}\|\ddj
u\|_{L^{p}}\big)_j\bigr\|_{\ell ^{r}({\mathop{\mathbb Z\kern 0pt}\nolimits})}.
$$
We then define the homogeneous Besov spaces by
$\dot{B}_{p,r}^s\stackrel{\mathrm{def}}{=}\left\{u\in\mathcal{S}'_h(\R^2),
\|u\|_{\dot{B}_{p,r}^s}<\infty\right\}$, where $u\in \mathcal{S}'_h(\R^2)$ means that $u\in \mathcal{S}'(\R^2)$ and $\lim_{j\to -\infty}\|\dot{S}_ju\|_{L^\infty}=0$ (see Definition 1.26 of \cite{bcd}).
\end{definition}

When employing parabolic estimates in Besov spaces, it is somehow natural to take the time-Lebesgue norm before performing the summation for computing the Besov norm. So we next introduce the following Besov-Chemin-Lerner space $\widetilde{L}_T^q(\dot{B}_{p,r}^s)$ (see\,\cite{bcd}):
$$
\widetilde{L}_T^q(\dot{B}_{p,r}^s)={\Big\{}u(t,x)\in (0,+\infty)\times\mathcal{S}'_h(\R^2):
\|u\|_{\widetilde{L}_T^q(\dot{B}_{p,r}^s)}<+\infty{\Big\}},
$$
where
$$
\|u\|_{\widetilde{L}_T^q(\dot{B}_{p,r}^s)}\stackrel{\mathrm{def}}{=}\bigl{\|}2^{ks}\|\dot{\Delta}_k u(t)\|_{L^q(0,T;L^p)}\bigr{\|}_{\ell^r}.
$$
The index $T$ will be omitted if $T=+\infty$ and we shall denote by $\widetilde{\mathcal{C}}_b([0,T]; \dot{B}^s_{p,r})$ the subset of functions of $\widetilde{L}^\infty_T(\dot{B}^s_{p,r})$ which are also continuous from
$[0,T]$ to $\dot{B}^s_{p,r}$.

By the Minkowski inequality, we have the following inclusions between the
Chemin-Lerner space ${\widetilde{L}^\lambda_{T}(\dot{B}_{p,r}^s)}$ and the Bochner space ${{L}^\lambda_{T}(\dot{B}_{p,r}^s)}$:
\begin{align*}
\|u\|_{\widetilde{L}^\lambda_{T}(\dot{B}_{p,r}^s)}\le\|u\|_{L^\lambda_{T}(\dot{B}_{p,r}^s)}\hspace{0.5cm} \mathrm{if }\hspace{0.2cm}  \lambda\le r,\hspace{0.5cm}
\|u\|_{\widetilde{L}^\lambda_{T}(\dot{B}_{p,r}^s)}\ge\|u\|_{L^\lambda_{T}(\dot{B}_{p,r}^s)},\hspace{0.5cm} \mathrm{if }\hspace{0.2cm}  \lambda\ge r.
\end{align*}
The following Bernstein's lemma will be repeatedly used throughout this paper.

\begin{lemma}\label{bernstein}
Let $\mathcal{B}$ be a ball and $\mathcal{C}$ a ring of $\mathbb{R}^2$. A constant $C$ exists so that for any positive real number $\lambda$, any
non-negative integer k, any smooth homogeneous function $\sigma$ of degree m, and any couple of real numbers $(p, q)$ with
$1\le p \le q\le\infty$, there hold
\begin{align*}
&&\mathrm{Supp} \,\hat{u}\subset\lambda \mathcal{B}\Rightarrow\sup_{|\alpha|=k}\|\partial^{\alpha}u\|_{L^q}\le C^{k+1}\lambda^{k+2(\frac1p-\frac1q)}\|u\|_{L^p},\\
&&\mathrm{Supp} \,\hat{u}\subset\lambda \mathcal{C}\Rightarrow C^{-k-1}\lambda^k\|u\|_{L^p}\le\sup_{|\alpha|=k}\|\partial^{\alpha}u\|_{L^p}
\le C^{k+1}\lambda^{k}\|u\|_{L^p},\\
&&\mathrm{Supp} \,\hat{u}\subset\lambda \mathcal{C}\Rightarrow \|\sigma(D)u\|_{L^q}\le C_{\sigma,m}\lambda^{m+2(\frac1p-\frac1q)}\|u\|_{L^p}.
\end{align*}
\end{lemma}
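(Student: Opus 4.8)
The plan is to prove all three inequalities along the same standard route: freeze a smooth cut-off that is identically $1$ on the (rescaled) Fourier support of $u$, realise each operator as convolution against a dilated Schwartz kernel, and then combine Young's convolution inequality with the elementary scaling of $L^r$-norms under dilations. The only delicate point, which I expect to be the main obstacle, is the bookkeeping of constants, so that the dependence on the derivative order $k$ comes out as $C^{k+1}$.

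For the first inequality I would fix $\psi\in C^\infty_c(\R^2)$ with $\psi\equiv1$ on a neighbourhood of $\mathcal B$, so that $\widehat u=\psi(\cdot/\lambda)\widehat u$, and write, for $|\alpha|=k$,
\[
\partial^\alpha u=\mathcal F^{-1}\!\big((i\xi)^\alpha\psi(\xi/\lambda)\widehat u(\xi)\big)=\lambda^{k}\,(K_\alpha)_\lambda * u,\qquad K_\alpha:=\mathcal F^{-1}\!\big((i\cdot)^\alpha\psi\big),
\]
where $(f)_\lambda(x):=\lambda^{2}f(\lambda x)$ has Fourier transform $\widehat f(\cdot/\lambda)$. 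Then Young's inequality with $1+\tfrac1q=\tfrac1r+\tfrac1p$ (here $r\in[1,\infty]$ since $p\le q$) together with the scaling identity $\|(K_\alpha)_\lambda\|_{L^r}=\lambda^{2(1-1/r)}\|K_\alpha\|_{L^r}=\lambda^{2(1/p-1/q)}\|K_\alpha\|_{L^r}$ gives
\[
\|\partial^\alpha u\|_{L^q}\le\lambda^{k+2(1/p-1/q)}\,\|K_\alpha\|_{L^r}\,\|u\|_{L^p}.
\]
It then remains to check $\sup_{|\alpha|=k}\|K_\alpha\|_{L^r}\le C^{k+1}$ uniformly in $r\in[1,\infty]$: in $\R^2$ one controls $\|K_\alpha\|_{L^1}$ by $\|(1+|x|)^{3}K_\alpha\|_{L^\infty}$, hence by $\sum_{|\beta|\le3}\|\partial^\beta_\xi((i\xi)^\alpha\psi)\|_{L^1}$, and the Leibniz rule produces only polynomially many terms in $k$, each the product of a polynomial of degree $\le k$ with coefficients $\lesssim k^{|\gamma|}$ and a derivative of $\psi$, evaluated on the fixed compact set $\mathrm{Supp}\,\psi$, so the total is $\le C^{k}$; the $L^\infty$-norm is estimated identically, and $\|K_\alpha\|_{L^r}\le\max(\|K_\alpha\|_{L^1},\|K_\alpha\|_{L^\infty})$. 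Taking $q=p$ (a ring being contained in a slightly larger ball) this already yields the upper bound in the second line.

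For the lower bound in the second line I would pick $\widetilde\psi\in C^\infty_c(\R^2)$ supported away from the origin with $\widetilde\psi\equiv1$ on $\mathcal C$, and use the multinomial identity $1=\big(\sum_j\xi_j^2/|\xi|^2\big)^k=\sum_{|\alpha|=k}\binom{k}{\alpha}\xi^{2\alpha}/|\xi|^{2k}$, valid on $\lambda\mathcal C\supset\mathrm{Supp}\,\widehat u$. Since $\xi^{2\alpha}\widehat u=(-i)^k\xi^\alpha\mathcal F(\partial^\alpha u)$ and $\xi^\alpha\widetilde\psi(\xi/\lambda)/|\xi|^{2k}=\lambda^{-k}\eta_\alpha(\xi/\lambda)$ with $\eta_\alpha:=\xi^\alpha\widetilde\psi/|\xi|^{2k}\in C^\infty_c(\R^2\setminus\{0\})$, one obtains
\[
u=\lambda^{-k}\sum_{|\alpha|=k}\binom{k}{\alpha}(-i)^k\,(\mathcal F^{-1}\eta_\alpha)_\lambda * \partial^\alpha u,
\]
and Young with $p=q$, $r=1$ (the $L^1$-norm being dilation invariant), $\sum_{|\alpha|=k}\binom{k}{\alpha}=2^k$, and the bound $\|\mathcal F^{-1}\eta_\alpha\|_{L^1}\le C^{k+1}$ (same Leibniz bookkeeping as before, now also differentiating the homogeneous factor $|\xi|^{-2k}$, which again costs a factor $\lesssim k$ per derivative on $\mathrm{Supp}\,\widetilde\psi$) give $\lambda^k\|u\|_{L^p}\le(2C)^{k+1}\sup_{|\alpha|=k}\|\partial^\alpha u\|_{L^p}$, i.e. the claimed lower bound after renaming the constant.

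Finally, for the third inequality, with $\widetilde\psi$ as above and $\rho_\sigma:=\sigma\widetilde\psi\in C^\infty_c(\R^2\setminus\{0\})$, homogeneity of $\sigma$ gives $\sigma(\xi)\widetilde\psi(\xi/\lambda)=\lambda^m\rho_\sigma(\xi/\lambda)$, hence $\sigma(D)u=\lambda^m(\mathcal F^{-1}\rho_\sigma)_\lambda*u$, and Young with $1+\tfrac1q=\tfrac1r+\tfrac1p$ plus the same scaling yield $\|\sigma(D)u\|_{L^q}\le\lambda^{m+2(1/p-1/q)}\|\mathcal F^{-1}\rho_\sigma\|_{L^r}\|u\|_{L^p}$ with $C_{\sigma,m}:=\|\mathcal F^{-1}\rho_\sigma\|_{L^r}<\infty$ since $\rho_\sigma$ is a test function; here the constant may depend on $\sigma$ and $m$, so no tracking of $k$ is needed. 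As indicated, the only genuine work in the whole argument is the geometric-in-$k$ control of the multiplier norms $\|K_\alpha\|_{L^r}$ and $\|\mathcal F^{-1}\eta_\alpha\|_{L^1}$; granting that, the remainder is just cut-off, Young's inequality, and scaling.
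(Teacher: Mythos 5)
Your proposal is correct and is essentially the classical proof of Bernstein's inequalities (the one in Bahouri--Chemin--Danchin, Lemma~2.1), which the paper simply quotes without proof: cut-off on the Fourier side, realisation as convolution with a dilated Schwartz kernel, Young's inequality plus $L^r$-scaling, and for the reverse inequality the multinomial decomposition $1=\sum_{|\alpha|=k}\binom{k}{\alpha}\xi^{2\alpha}|\xi|^{-2k}$ on the ring. The Leibniz bookkeeping you give for the geometric-in-$k$ bound on the kernel norms is the right (and only) delicate point, and it is handled correctly; the only cosmetic remark is that in the third inequality the constant should be taken as $\max\bigl(\|\mathcal F^{-1}\rho_\sigma\|_{L^1},\|\mathcal F^{-1}\rho_\sigma\|_{L^\infty}\bigr)$ so that it is uniform in $(p,q)$, exactly as you already do for $K_\alpha$.
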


Next we  recall a few nonlinear estimates in Besov spaces which may be
obtained by means of paradifferential calculus.
Here, we recall the decomposition in the homogeneous context:
\begin{align}\label{bony}
uv=\dot{T}_uv+\dot{T}_vu+\dot{R}(u,v)=\dot{T}_uv+\dot{T}'_vu,
\end{align}
where
$$\dot{T}_uv\stackrel{\mathrm{def}}{=}\sum_{j\in \mathbb{Z}}\dot{S}_{j-1}u\dot{\Delta}_jv, \hspace{0.5cm}\dot{R}(u,v)\stackrel{\mathrm{def}}{=}\sum_{j\in \Z}
\dot{\Delta}_ju\widetilde{\dot{\Delta}}_jv,$$
and
$$ \widetilde{\dot{\Delta}}_jv\stackrel{\mathrm{def}}{=}\sum_{|j-j'|\le1}\dot{\Delta}_{j'}v,\hspace{0.5cm} \dot{T}'_vu\stackrel{\mathrm{def}}{=}\sum_{j\in \Z}\dot{S}_{j+2}v\dot{\Delta}_ju.$$

The paraproduct $\dot{T}$ and the remainder $\dot{R}$ operators satisfy the following
continuous properties.

\begin{lemma}\label{fangji}
Let $(s,r)\in\R\times[1,\infty]$  and $1\leq p,p_1,p_2\leq\infty$ with  $\frac 1 p=\frac{1}{p_1}+\frac{1}{p_2}.$
\begin{itemize}
\item We have:
$$
\|\dot{T}_uv\|_{\dot B^s_{p,r}}\lesssim \|u\|_{L^{p_1}}\|v\|_{\dot B^{s}_{p_2,r}}\quad\hbox{and}\quad
\|\dot{T}_uv\|_{\dot B^{s+t}_{p,r}}\lesssim \|u\|_{\dot B^t_{p_1,\infty}}\|v\|_{\dot B^{s}_{p_2,r}},\quad
\hbox{if }\ t<0.
$$
\item If
$s_1+s_2>0$ and $\frac{1}{r}=\frac{1}{r_1}+\frac{1}{r_2}\leq1$ then
 $$\|\dot{R}(u,v)\|_{\dot B^{s_1+s_2}_{p,r}}\lesssim\|u\|_{\dot B^{s_1}_{p_1,r_1}}
\|v\|_{\dot B^{s_2}_{p_2,r_2}}.$$
\item If $s_1+s_2=0$  and $\frac{1}{r_1}+\frac{1}{r_2}\geq1$ then
\begin{align}\label{reminder}
\|\dot{R}(u,v)\|_{\dot B^{0}_{p,\infty}}\lesssim\|u\|_{\dot B^{s_1}_{p_1,r_1}}
\|v\|_{\dot B^{s_2}_{p_2,r_2}}.
\end{align}
\end{itemize}

 \end{lemma}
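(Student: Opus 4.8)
\textbf{Plan of proof for Lemma \ref{fangji}.}
The three assertions are the standard continuity properties of the paraproduct $\dot T$ and the remainder $\dot R$ from the Bony decomposition \eqref{bony}, and the plan is to prove each by a direct frequency-localization argument combined with Bernstein's lemma (Lemma \ref{bernstein}) and convolution inequalities for $\ell^r$-sequences. Throughout, I will use the quasi-orthogonality of the Littlewood--Paley blocks: $\dot\Delta_j(\dot S_{j'-1}u\,\dot\Delta_{j'}v)=0$ unless $|j-j'|\le N_0$ for some absolute constant $N_0$, and $\dot\Delta_j(\dot\Delta_{j'}u\,\widetilde{\dot\Delta}_{j'}v)=0$ unless $j'\ge j-N_0$.

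\emph{Paraproduct estimates.} For the first inequality, write $\dot\Delta_j(\dot T_uv)=\sum_{|j-j'|\le N_0}\dot\Delta_j(\dot S_{j'-1}u\,\dot\Delta_{j'}v)$. By H\"older's inequality with $\frac1p=\frac1{p_1}+\frac1{p_2}$ and the uniform bound $\|\dot S_{j'-1}u\|_{L^{p_1}}\lesssim\|u\|_{L^{p_1}}$ (which follows since $\dot S_{j'-1}$ is a uniformly bounded Fourier multiplier on $L^{p_1}$), one gets $\|\dot\Delta_j(\dot T_uv)\|_{L^p}\lesssim\|u\|_{L^{p_1}}\sum_{|j-j'|\le N_0}\|\dot\Delta_{j'}v\|_{L^{p_2}}$. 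Multiplying by $2^{js}$, using $2^{js}\lesssim 2^{j's}$ for $|j-j'|\le N_0$, and taking the $\ell^r$ norm in $j$ (the finite sum over $j'$ costs only a constant) yields $\|\dot T_uv\|_{\dot B^s_{p,r}}\lesssim\|u\|_{L^{p_1}}\|v\|_{\dot B^s_{p_2,r}}$. For the second inequality (the case $t<0$), replace the crude bound on $\dot S_{j'-1}u$ by $\|\dot S_{j'-1}u\|_{L^{p_1}}\le\sum_{k\le j'-2}\|\dot\Delta_k u\|_{L^{p_1}}\le\big(\sum_{k\le j'-2}2^{-kt}\big)\sup_k 2^{kt}\|\dot\Delta_k u\|_{L^{p_1}}\lesssim 2^{-j't}\|u\|_{\dot B^t_{p_1,\infty}}$, where the geometric sum converges precisely because $t<0$. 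Then $2^{j(s+t)}\|\dot\Delta_j(\dot T_uv)\|_{L^p}\lesssim\|u\|_{\dot B^t_{p_1,\infty}}\sum_{|j-j'|\le N_0}2^{j's}\|\dot\Delta_{j'}v\|_{L^{p_2}}$, and taking $\ell^r$ in $j$ gives the claim.

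\emph{Remainder estimates.} For the remainder, write $\dot\Delta_j\dot R(u,v)=\sum_{j'\ge j-N_0}\dot\Delta_j(\dot\Delta_{j'}u\,\widetilde{\dot\Delta}_{j'}v)$. Here the key extra input is Bernstein's lemma: the product $\dot\Delta_{j'}u\,\widetilde{\dot\Delta}_{j'}v$ has Fourier support in a ball of radius $\lesssim 2^{j'}$, so $\dot\Delta_j$ applied to it maps $L^{p'}\to L^p$ with a gain; more simply one estimates $\|\dot\Delta_j(\dot\Delta_{j'}u\,\widetilde{\dot\Delta}_{j'}v)\|_{L^p}\lesssim\|\dot\Delta_{j'}u\|_{L^{p_1}}\|\widetilde{\dot\Delta}_{j'}v\|_{L^{p_2}}$ (using that $\dot\Delta_j$ is $L^p$-bounded and H\"older). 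Thus $2^{j(s_1+s_2)}\|\dot\Delta_j\dot R(u,v)\|_{L^p}\lesssim\sum_{j'\ge j-N_0}2^{(j-j')(s_1+s_2)}\,2^{j's_1}\|\dot\Delta_{j'}u\|_{L^{p_1}}\,2^{j's_2}\|\widetilde{\dot\Delta}_{j'}v\|_{L^{p_2}}$. When $s_1+s_2>0$ the factor $2^{(j-j')(s_1+s_2)}$ is summable in $j'\ge j-N_0$ with a uniform bound, so this is a convolution of the sequences $(2^{j's_1}\|\dot\Delta_{j'}u\|_{L^{p_1}})$ and $(2^{j's_2}\|\widetilde{\dot\Delta}_{j'}v\|_{L^{p_2}})$ against an $\ell^1$ sequence; by Young's inequality for series with $\frac1r=\frac1{r_1}+\frac1{r_2}$ one obtains $\|\dot R(u,v)\|_{\dot B^{s_1+s_2}_{p,r}}\lesssim\|u\|_{\dot B^{s_1}_{p_1,r_1}}\|v\|_{\dot B^{s_2}_{p_2,r_2}}$, absorbing the harmless $\widetilde{\dot\Delta}_{j'}$ by $\|\widetilde{\dot\Delta}_{j'}v\|_{L^{p_2}}\lesssim\sum_{|j'-j''|\le1}\|\dot\Delta_{j''}v\|_{L^{p_2}}$. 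For the borderline case $s_1+s_2=0$, the factor $2^{(j-j')(s_1+s_2)}=1$ is no longer summable, so one cannot land in $\dot B^0_{p,r}$; instead one only takes the supremum over $j$: $\sup_j\|\dot\Delta_j\dot R(u,v)\|_{L^p}\lesssim\sup_j\sum_{j'\ge j-N_0}2^{j's_1}\|\dot\Delta_{j'}u\|_{L^{p_1}}2^{j's_2}\|\widetilde{\dot\Delta}_{j'}v\|_{L^{p_2}}$, and bounding this tail sum by H\"older in $j'$ with exponents $r_1,r_2$ (legitimate since $\frac1{r_1}+\frac1{r_2}\ge1$ forces $\frac1{r_1}+\frac1{r_2}\ge1\ge$ the conjugacy needed, equivalently $\ell^{r_1}\cdot\ell^{r_2}\hookrightarrow\ell^1$) gives $\|\dot R(u,v)\|_{\dot B^0_{p,\infty}}\lesssim\|u\|_{\dot B^{s_1}_{p_1,r_1}}\|v\|_{\dot B^{s_2}_{p_2,r_2}}$, which is \eqref{reminder}.

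\emph{Main obstacle.} There is no deep obstacle here; the only points requiring care are bookkeeping ones: tracking the quasi-orthogonality constant $N_0$ so that the finite near-diagonal sums in the paraproduct genuinely cost only a constant, correctly justifying the geometric summation $\sum_{k\le j'-2}2^{-kt}\lesssim 2^{-j't}$ in the $t<0$ case, and — most delicately — verifying in the borderline remainder estimate that the hypothesis $\frac1{r_1}+\frac1{r_2}\ge1$ is exactly what makes the H\"older step $\ell^{r_1}\times\ell^{r_2}\hookrightarrow\ell^1$ valid, so that the loss of one integrability index (from $r$ to $\infty$) is unavoidable but the product still lands in $\dot B^0_{p,\infty}$.
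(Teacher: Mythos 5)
Your proof is correct and is the standard argument (quasi-orthogonality of the dyadic blocks, H\"older, Bernstein, and Young/H\"older for series) that underlies this lemma; the paper itself states it without proof as a recalled fact from paradifferential calculus (cf. \cite{bcd}), so there is nothing to contrast. All the delicate points — the geometric summation requiring $t<0$, the summability of $2^{(j-j')(s_1+s_2)}$ requiring $s_1+s_2>0$, and the $\ell^{r_1}\cdot\ell^{r_2}\hookrightarrow\ell^1$ step requiring $\frac1{r_1}+\frac1{r_2}\ge1$ in the borderline case — are handled correctly.
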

From Lemma \ref{fangji}, we may deduce the following  several nonlinear estimates in Besov spaces
\begin{lemma}\label{guangyidaishu}{\rm(\cite{bcd})}
 Let $s_1\le \frac{2}{p}, s_2< \frac{2}{p},\, s_1+s_2\ge 2\max(0,\frac 2p-1)$,
and $1\le p\le \infty$.
Assume that  $u\in\dot B^{s_1}_{p,1}(\R^2)$ and
$v\in \dot B^{s_2}_{p,\infty}(\R^2)$. Then there
holds
\begin{align*}
\|uv\|_{\dot B^{s_1+s_2-\frac{2}{p}}_{p,\infty}}
\le C\|u\|_{\dot B^{s_1}_{p,1}}\|v\|_{\dot B^{s_2}_{p,\infty}}.
\end{align*}
\end{lemma}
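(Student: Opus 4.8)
The plan is to run Bony's decomposition $uv=\dot T_uv+\dot T_vu+\dot R(u,v)$ and to bound the three pieces separately in $\dot B^{s_1+s_2-\frac2p}_{p,\infty}$, keeping track of the third (summability) index; the only inputs will be Lemma~\ref{fangji} and the Besov embeddings $\dot B^{s}_{p,1}(\R^2)\hookrightarrow\dot B^{s-\frac2p}_{\infty,\infty}(\R^2)$ (valid for every $s$) and $\dot B^{\frac2p}_{p,1}(\R^2)\hookrightarrow L^{\infty}(\R^2)$. For $\dot T_uv$: when $s_1<\frac2p$, embed $u\hookrightarrow\dot B^{s_1-\frac2p}_{\infty,\infty}$ and apply the second paraproduct estimate of Lemma~\ref{fangji} with $t=s_1-\frac2p<0$, $p_1=\infty$, $p_2=p$, $r=\infty$, yielding $\|\dot T_uv\|_{\dot B^{s_1+s_2-\frac2p}_{p,\infty}}\lesssim\|u\|_{\dot B^{s_1}_{p,1}}\|v\|_{\dot B^{s_2}_{p,\infty}}$; when $s_1=\frac2p$, use instead the first paraproduct estimate with $\|u\|_{L^\infty}$ in place of $\|u\|_{\dot B^{t}_{p_1,\infty}}$. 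For $\dot T_vu$: since $s_2<\frac2p$ strictly, embed $v\hookrightarrow\dot B^{s_2-\frac2p}_{\infty,\infty}$ and apply the second paraproduct estimate with $u$ now the high-frequency factor and $r=1$, giving $\|\dot T_vu\|_{\dot B^{s_1+s_2-\frac2p}_{p,1}}\lesssim\|v\|_{\dot B^{s_2}_{p,\infty}}\|u\|_{\dot B^{s_1}_{p,1}}$, and then $\dot B^{s_1+s_2-\frac2p}_{p,1}\hookrightarrow\dot B^{s_1+s_2-\frac2p}_{p,\infty}$.

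The remainder $\dot R(u,v)$ is where the hypothesis $s_1+s_2\ge2\max(0,\frac2p-1)$ really enters, since its output must sit in an $L^p$-based space while each factor lives only in $L^p$, so one has to spend $\frac2p$ derivatives. I would set $q=\max(p,2)$, so that $q\ge2$ and $q\ge p$, embed $u\in\dot B^{s_1}_{p,1}\hookrightarrow\dot B^{\tilde s_1}_{q,1}$ and $v\in\dot B^{s_2}_{p,\infty}\hookrightarrow\dot B^{\tilde s_2}_{q,\infty}$ with $\tilde s_i:=s_i-\frac2p+\frac2q$, and note that $\frac1q+\frac1q\le1$ while the pair of third indices $(1,\infty)$ satisfies $\frac11+\frac1\infty=1$. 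The remainder estimate of Lemma~\ref{fangji} with $p_1=p_2=q$ then gives $\dot R(u,v)\in\dot B^{\tilde s_1+\tilde s_2}_{q/2,1}$ when $\tilde s_1+\tilde s_2>0$, while in the endpoint case $\tilde s_1+\tilde s_2=0$ the estimate \eqref{reminder} gives $\dot R(u,v)\in\dot B^{0}_{q/2,\infty}$; crucially, $\tilde s_1+\tilde s_2\ge0$ is precisely $s_1+s_2\ge\frac4p-\frac4q=2\max(0,\frac2p-1)$, the hypothesis. Embedding back, $\dot B^{\tilde s_1+\tilde s_2}_{q/2,r}\hookrightarrow\dot B^{\tilde s_1+\tilde s_2-2(\frac2q-\frac1p)}_{p,r}=\dot B^{s_1+s_2-\frac2p}_{p,r}\hookrightarrow\dot B^{s_1+s_2-\frac2p}_{p,\infty}$, with norm bounded by $\|u\|_{\dot B^{s_1}_{p,1}}\|v\|_{\dot B^{s_2}_{p,\infty}}$. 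Adding the three bounds completes the proof.

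I expect the only genuine obstacle to be the bookkeeping around the remainder: one must pick the auxiliary integrability exponent $q=\max(p,2)$ so that the product of the two frequency-localized factors genuinely lands in a Banach space ($L^{q/2}$ with $q/2\ge1$, $\ell^r$ with $r\ge1$), and one must check that the borderline value $s_1+s_2=2\max(0,\frac2p-1)$ is covered exactly by the endpoint estimate \eqref{reminder}, whose hypothesis $\frac1{r_1}+\frac1{r_2}\ge1$ holds here because $u$ is measured with $r_1=1$ and $v$ with $r_2=\infty$. The two paraproduct terms are comparatively routine, each needing one Besov embedding followed by one use of Lemma~\ref{fangji}; the hypotheses $s_1\le\frac2p$ and $s_2<\frac2p$ are exactly what legitimize those embeddings (and, when $s_1=\frac2p$, the $L^\infty$ endpoint of the $\dot T_uv$ bound).
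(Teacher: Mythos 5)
The paper does not prove this lemma at all; it is quoted from \cite{bcd} without argument, so there is no in-paper proof to compare against. Your proposal is a correct and complete proof, and it is the standard one: Bony decomposition, with the paraproducts handled via the embedding $\dot B^{s}_{p,1}\hookrightarrow \dot B^{s-\frac2p}_{\infty,\infty}$ (resp.\ $\dot B^{\frac2p}_{p,1}\hookrightarrow L^\infty$ at the endpoint $s_1=\frac2p$) together with Lemma~\ref{fangji}, and the remainder handled after lifting both factors to an auxiliary exponent $q=\max(p,2)$ so that the product space $L^{q/2}$ is Banach. Your bookkeeping is exact: $\tilde s_1+\tilde s_2=s_1+s_2-\frac4p+\frac4q\ge 0$ is precisely the hypothesis $s_1+s_2\ge 2\max(0,\frac2p-1)$ in both regimes $p\le 2$ and $p\ge 2$; the third indices $(1,\infty)$ satisfy both the condition $\frac1{r_1}+\frac1{r_2}\le 1$ needed when $\tilde s_1+\tilde s_2>0$ and the condition $\frac1{r_1}+\frac1{r_2}\ge1$ needed for the endpoint estimate \eqref{reminder}; and the Bernstein embedding $\dot B^{\tilde s_1+\tilde s_2}_{q/2,r}\hookrightarrow\dot B^{s_1+s_2-\frac2p}_{p,r}$ is legitimate since $q/2\le p$ in both regimes. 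I see no gap.
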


\begin{lemma}\label{law}{\rm(\cite[Proposition A.1]{xujiang2021jde})}
Let $1\leq p, q\leq \infty$, $s_1\leq \frac{2}{q}$, $s_2\leq 2\min\{\frac1p,\frac1q\}$ and $s_1+s_2>2\max\{0,\frac1p +\frac1q -1\}$. For any $(u,v)\in\dot{B}_{q,1}^{s_1}({\mathbb R} ^2)\times\dot{B}_{p,1}^{s_2}({\mathbb R} ^2)$, we have
\begin{align*}
\|uv\|_{\dot{B}_{p,1}^{s_1+s_2 -\frac{2}{q}}}\lesssim \|u\|_{\dot{B}_{q,1}^{s_1}}\|v\|_{\dot{B}_{p,1}^{s_2}}.
\end{align*}
\end{lemma}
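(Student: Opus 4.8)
The plan is to reduce the product estimate to the paraproduct decomposition \eqref{bony}, $uv=\dot T_uv+\dot T_vu+\dot R(u,v)$, and bound each of the three pieces separately, using Lemma \ref{fangji} as the engine. The guiding principle is that in $\R^2$ one has $\dot B^{0}_{q,1}\hookrightarrow L^\infty$ only in the borderline sense, so one must instead always feed a factor into the paraproduct/remainder operators in the form required by Lemma \ref{fangji}, then convert Besov norms across Lebesgue exponents via Bernstein's inequality (Lemma \ref{bernstein}): $\|u\|_{\dot B^{s}_{p,1}}\lesssim\|u\|_{\dot B^{s+2/q-2/p}_{q,1}}$ whenever $q\le p$, and likewise when interchanging $p$ and $q$.

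First I would treat $\dot T_uv$. Since $s_1\le 2/q$, we have $u\in\dot B^{s_1}_{q,1}\hookrightarrow\dot B^{s_1-2/q}_{\infty,1}$; combined with $v\in\dot B^{s_2}_{p,1}$, the low-high paraproduct estimate (the first inequality of the first bullet of Lemma \ref{fangji}, applied with $L^\infty$ replaced by the relevant Besov embedding, or equivalently the second inequality with $t=s_1-2/q$, which is $<0$ unless $s_1=2/q$) gives
\begin{align*}
\|\dot T_uv\|_{\dot B^{s_1+s_2-2/q}_{p,1}}\lesssim\|u\|_{\dot B^{s_1}_{q,1}}\|v\|_{\dot B^{s_2}_{p,1}},
\end{align*}
where the endpoint case $s_1=2/q$ is handled by noting $\dot B^{2/q}_{q,1}\hookrightarrow L^\infty$. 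Next, for $\dot T_vu$ I would use that $s_2\le 2\min\{1/p,1/q\}\le 2/p$, so $v\in\dot B^{s_2}_{p,1}\hookrightarrow\dot B^{s_2-2/p}_{\infty,1}$; since the target regularity is $s_1+s_2-2/q=s_1+(s_2-2/p)+ (2/p-2/q)$, one applies the paraproduct estimate with $u\in\dot B^{s_1}_{q,1}$ and then converts the output from exponent $q$ to exponent $p$ by Bernstein — this is where the hypothesis $s_2\le 2/q$ (hidden inside $s_2\le 2\min\{1/p,1/q\}$) is needed so that the shifted index $s_2-2/q\le 0$ makes the paraproduct bound legitimate. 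The case distinction $p\lessgtr q$ must be carried out here.

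Finally, for the remainder $\dot R(u,v)$ I would invoke the second or third bullet of Lemma \ref{fangji}: writing the desired index as $(s_1-2/q+2/p)+s_2-2/p$ after a Bernstein conversion, or working directly in exponent $p$, we need $s_1+s_2-2/q\cdot(\text{correction})>0$, which is exactly guaranteed by $s_1+s_2>2\max\{0,1/p+1/q-1\}$: this is the precise threshold making the remainder sum converge in $\ell^1$, since $1/r=1/1+1/1=2>1$ forces us into the $s_1+s_2>0$-type regime after accounting for the difference of Lebesgue exponents. The main obstacle — and the only genuinely delicate point — is bookkeeping the Lebesgue-exponent shifts so that at every application of Lemma \ref{fangji} the regularity index fed into a paraproduct is $\le 0$ (or $<0$) and the index fed into a remainder is $>0$; the hypotheses $s_1\le 2/q$, $s_2\le 2\min\{1/p,1/q\}$, and $s_1+s_2>2\max\{0,1/p+1/q-1\}$ are each calibrated to make exactly one of these three checks pass, and verifying that they suffice simultaneously in both cases $p\le q$ and $q\le p$ is the crux. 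Summing the three bounds and using $\|\cdot\|_{\dot B^s_{p,1}}\lesssim\|\cdot\|_{\widetilde{\ell^1}}$-type subadditivity over the decomposition completes the proof.
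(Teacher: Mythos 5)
The paper does not prove this lemma at all: it is imported verbatim as Proposition A.1 of \cite{xujiang2021jde}, so there is no in-paper argument to compare against. Your Bony-decomposition plan is the standard (and, in the cited reference, essentially the actual) route, and the way you calibrate the three hypotheses to the three pieces is correct: $s_1\le 2/q$ governs $\dot T_uv$ (via $\dot B^{s_1}_{q,1}\hookrightarrow \dot B^{s_1-2/q}_{\infty,\infty}$, with the endpoint $s_1=2/q$ absorbed by $\dot B^{2/q}_{q,1}\hookrightarrow L^\infty$); $s_2\le 2\min\{1/p,1/q\}$ governs $\dot T_vu$, where the case split is genuinely needed ($s_2\le 2/q$ when $p\le q$, but $s_2\le 2/p$ when $q\le p$, followed by a Bernstein conversion of the output from exponent $q$ to $p$ — your parenthetical singling out $s_2\le 2/q$ covers only one of the two cases); and $s_1+s_2>2\max\{0,1/p+1/q-1\}$ is exactly the convergence threshold for the remainder. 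One remark in your remainder discussion is off target: the constraint ``$1/r_1+1/r_2\le 1$'' in Lemma \ref{fangji} concerns the summability indices and is harmless here (take $r_1=1$, $r_2=\infty$); the genuine obstruction when $1/p+1/q>1$ is that H\"older cannot land in a Lebesgue space of exponent $\ge 1$, so one must first apply Bernstein to the $L^q$ factor to bring it to $L^{q'}$ with $1/q'+1/p=1$, paying $2^{2j(1/p+1/q-1)}$ per block, and it is precisely this loss that the hypothesis on $s_1+s_2$ compensates. With that point made precise, your outline assembles into a complete proof.
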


\begin{lemma}\label{daishu}
Let
$2\leq p <4$. For any
 $u\in\dot{B}_{p,1}^{\frac {2}{p}}(\R^2), v^\ell\in\dot{B}_{2,1}^{0}(\R^2)$ and $ v^h\in\dot{B}_{p,1}^{\frac {2}{p}-1}(\R^2),$ we have
\begin{align}\label{pengyou}
&\|(uv)^\ell\|_{\dot{B}_{2,1}^{0}}\lesssim(\|v^\ell\|_{\dot{B}_{2,1}^{0}}+\|v^h\|_{\dot{B}_{p,1}^{\frac {2}{p}-1}})\|u\|_{\dot{B}_{p,1}^{\frac {2}{p}}}.
\end{align}
\end{lemma}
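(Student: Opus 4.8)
The plan is to use Bony's decomposition \eqref{bony} to write $uv = \dot{T}_u v + \dot{T}_v u + \dot{R}(u,v)$ and estimate the low-frequency part $(uv)^\ell$ term by term, splitting $v = v^\ell + v^h$ wherever the difference in integrability exponents between $\dot{B}^0_{2,1}$ and $\dot{B}^{2/p-1}_{p,1}$ must be reconciled. The guiding principle is that low frequencies of a product are controlled by the full (all-frequency) norm of one factor against the full norm of the other, and since $(uv)^\ell$ only involves dyadic blocks $j \le j_0$, we may freely pay or gain a bounded number of negative powers of $2$; concretely, for any $s' \le 0$ one has $\|w^\ell\|_{\dot B^0_{2,1}} \lesssim \|w^\ell\|_{\dot B^{s'}_{2,1}}$ up to a constant depending on $j_0$, which lets us absorb mismatched indices. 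Note $\frac2p \le 1$ for $p \ge 2$, so $\dot{B}^{2/p}_{p,1}$ embeds into $L^\infty \cap \dot{B}^{2/p-1}_{p,1}$ (using $p<4$, hence $\frac2p-1 > -\frac12 > -\frac2p$, so Sobolev embedding into $L^2$-based spaces is available), and these embeddings will be used repeatedly for the factor $u$.

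First I would handle the paraproduct $\dot{T}_v u$. Split $v = v^\ell + v^h$. For the low part, since $v^\ell \in \dot{B}^0_{2,1} \hookrightarrow \dot{B}^{2/p - 2 \cdot (1/2 - 1/p)}_{p,\infty}$ — actually cleaner: $v^\ell \in L^2$ only marginally, so instead I would use $\|\dot T_{v^\ell} u\|_{\dot B^0_{p,1}}$ via the second paraproduct estimate in Lemma \ref{fangji} with a slightly negative $t$, writing $v^\ell \in \dot B^{t}_{2,\infty}$ for small $t<0$ (valid since $v^\ell$ lives at low frequencies so the $\dot B^0_{2,1}$ norm dominates the $\dot B^t_{2,\infty}$ norm), pairing against $u \in \dot B^{-t+\text{something}}_{\cdot,1}$; then convert the resulting $L^p$-based low-frequency bound to $\dot B^0_{2,1}$ by Bernstein (gaining $2(1/p-1/2)<0$ on low frequencies is the wrong sign, so one instead localizes: $(uv)^\ell$ is spectrally supported in a fixed ball, and on a fixed ball $\|\cdot\|_{L^2} \lesssim \|\cdot\|_{L^p}$ fails for $p>2$ — so the correct move is $\|\cdot\|_{L^p}\lesssim\|\cdot\|_{L^2}$, i.e. estimate into $L^p$-based Besov and then note we actually need the $L^2$-based one, forcing us to keep $v^\ell$ in $L^2$). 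For the high part $\dot T_{v^h} u$: here $v^h \in \dot B^{2/p-1}_{p,1}$ and $u \in \dot B^{2/p}_{p,1} \hookrightarrow L^\infty$, so $\dot T_{v^h} u \in \dot B^{2/p-1}_{p,1}$ by the first estimate of Lemma \ref{fangji} (with $u \in L^\infty$), and since $2/p - 1 \le 0$ and the output low-frequency part sits in a fixed ball, $\|(\dot T_{v^h}u)^\ell\|_{\dot B^0_{2,1}} \lesssim \|\dot T_{v^h} u\|_{\dot B^{2/p-1}_{p,1}} \lesssim \|v^h\|_{\dot B^{2/p-1}_{p,1}}\|u\|_{\dot B^{2/p}_{p,1}}$, using $\dot B^{2/p-1}_{p,1}\hookrightarrow \dot B^{2/p-1 - 2(1/p-1/2)}_{2,1} = \dot B^{0}_{2,1}$ wait that needs $2/p-1-2(1/p-1/2)=0$, which is exactly an equality — good, so the Sobolev embedding $\dot B^{2/p-1}_{p,1}\hookrightarrow\dot B^0_{2,1}$ holds with equality of indices, which is the clean way to pass from $L^p$ to $L^2$ based spaces here.

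Next, the paraproduct $\dot T_u v$: with $u \in L^\infty$, the first estimate of Lemma \ref{fangji} gives $\|\dot T_u v\|_{\dot B^s_{r,1}} \lesssim \|u\|_{L^\infty}\|v\|_{\dot B^s_{r,1}}$ for the appropriate $(s,r)$; applying this separately with $(v^\ell, s=0, r=2)$ and $(v^h, s = 2/p-1, r=p)$ and using the embedding $\dot B^{2/p-1}_{p,1}\hookrightarrow \dot B^0_{2,1}$ for the high piece, while for the low piece $\dot T_u v^\ell \in \dot B^0_{2,1}$ directly, and restricting to low output frequencies costs nothing. Finally the remainder $\dot R(u,v)$: apply Lemma \ref{fangji}'s remainder estimate. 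For $\dot R(u, v^\ell)$ take $s_1 = 2/p$ (from $u$), $s_2 = 0$ (from $v^\ell \in \dot B^0_{2,1}$, with $r_2=1$), $p_1 = p$, $p_2 = 2$: then $s_1+s_2 = 2/p > 0$, and $\dot R(u,v^\ell) \in \dot B^{2/p}_{q,1}$ with $1/q = 1/p + 1/2$; by Bernstein on the fixed-ball low-frequency support we pass to $\dot B^0_{2,1}$ (since $q \le 2$, $L^q \hookrightarrow$ nothing, but on a ball $\dot B^{2/p}_{q,1}\hookrightarrow \dot B^{2/p - 2(1/q - 1/2)}_{2,1} = \dot B^{0}_{2,1}$ — check: $2(1/q-1/2) = 2/p$, so index is $0$, perfect). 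For $\dot R(u, v^h)$ take $s_1 = 2/p$, $s_2 = 2/p - 1$, $p_1 = p_2 = p$, $r_1 = r_2 = 1$: need $s_1 + s_2 = 4/p - 1 > 0$, i.e. $p < 4$ — this is exactly where the hypothesis $p<4$ is used; then $\dot R \in \dot B^{4/p-1}_{p/2, 1}\hookrightarrow$ (on the low-frequency ball) $\dot B^{4/p-1-2(2/p-1/2)}_{2,1} = \dot B^{0}_{2,1}$. Summing the six pieces gives \eqref{pengyou}.

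The main obstacle I anticipate is bookkeeping the passage from $L^p$-based to $L^2$-based Besov regularity for each piece: one must check in every term that the Sobolev-type embedding $\dot B^{\sigma}_{q,1}\hookrightarrow \dot B^{0}_{2,1}$ holds on the (fixed, compact, away-from-origin-free) low-frequency ball, which amounts to verifying $\sigma - 2(1/q - 1/2) = 0$ or $\ge 0$ — and the worst case, the high-high remainder $\dot R(u,v^h)$, forces $4/p - 1 > 0$, pinning down the restriction $p < 4$. A secondary subtlety is the treatment of $\dot T_{v^\ell} u$: since $v^\ell$ is only in $L^2$ and $u$'s regularity $2/p$ may be small, one should exploit that $v^\ell$ is low-frequency to borrow integrability (e.g. $v^\ell \in \dot B^{2/p-1}_{p,1}$ or into $L^{p'}$ for suitable $p'$ by Bernstein, since low-frequency $L^2$ functions are also in higher $L^q$), so that the first paraproduct estimate applies with $u \in \dot B^{2/p}_{p,1}$; this is routine once one commits to using Bernstein on $v^\ell$ freely.
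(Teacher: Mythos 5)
Your overall architecture (Bony decomposition of $(uv)^\ell$, splitting $v=v^\ell+v^h$, with the high--high remainder $\dot R(u,v^h)$ pinning down $p<4$) matches the paper's, and your treatment of $\dot T_u v^\ell$ and of both remainder pieces is correct. But the argument breaks at exactly the step you flag as the main bookkeeping issue: you repeatedly invoke the ``embedding'' $\dot B^{2/p-1}_{p,1}(\R^2)\hookrightarrow\dot B^{0}_{2,1}(\R^2)$, justified by the equality of scaling indices $2/p-1-2(1/p-1/2)=0$. For $2<p<4$ this embedding is false: Sobolev/Bernstein embeddings between Besov spaces of equal scaling only allow the Lebesgue exponent to \emph{increase}, and even for functions with Fourier transform supported in a fixed ball one does not have $\|f\|_{L^2}\lesssim\|f\|_{L^p}$ when $p>2$ (take a sum of $N$ widely separated translates of a fixed band-limited bump: the $L^2$ norm grows like $N^{1/2}$ while the $L^p$ norm grows like $N^{1/p}$). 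This invalidates your bounds for both $\dot T_u v^h$ and $\dot T_{v^h}u$, i.e.\ precisely the two pieces where the $L^p$-based datum $v^h$ enters a paraproduct; restricting to the low-frequency output does not help. (Two smaller issues: your $\dot T_{v^\ell}u$ discussion is left inconclusive, and in $\dot T_{v^h}u$ the first estimate of Lemma \ref{fangji} requires the \emph{first} factor $v^h$, not $u$, to lie in a Lebesgue space, so the roles are swapped there as well.)

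The repair --- and what the paper actually does --- is to perform the $L^p\to L^2$ conversion on a \emph{factor}, not on the product. Introduce $p^*$ with $1/p^*=1/2-1/p$; since $p<4$ one has $p^*\ge p$, so $\dot B^{2/p-1}_{p,1}\hookrightarrow\dot B^{2/p^*-1}_{p^*,1}=\dot B^{-2/p}_{p^*,1}$ is a legitimate Bernstein embedding, and pairing an $\dot B^{-2/p}_{p^*,\cdot}$ factor with $u\in\dot B^{2/p}_{p,1}$ lands the paraproduct directly in $\dot B^{0}_{2,1}$ because $1/p^*+1/p=1/2$ (this is \eqref{D116}). The analogous fix for $\dot T_u v^h$ is to put $\dot S_{j-1}u$ in $L^{p^*}$, or, as the paper does, to write $\dot S_{j_0+1}\dot T_u v=\dot T_u\dot S_{j_0+1}v+[\dot S_{j_0+1},\dot T_u]v$ and estimate the commutator, which gains a derivative on $u$ and is then handled with the same $p^*$ device. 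Your route for the remainders --- landing in $\dot B^{4/p-1}_{p/2,1}$ (resp.\ $\dot B^{2/p}_{q,1}$ with $1/q=1/p+1/2$) and embedding up into $\dot B^0_{2,1}$ --- is fine precisely because there $p/2\le 2$ and $q\le 2$, so the Lebesgue exponent increases.
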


\begin{proof}
We first use  Bony's decomposition to write
\begin{align}\label{D9}
\dot S_{j_0+1} (uv)=\dot{T}_{u}\dot S_{j_0+1}  v+\dot S_{j_0+1} \bigl(\dot{T}_{v} u+ \dot{R}(v,u)\bigr)
+[\dot S_{j_0+1} ,\dot{T}_{u}]v.
\end{align}
Applying Lemma \ref{fangji}, we have
\begin{align}\label{D11-1}
\|\dot{T}_{u}\dot S_{j_0+1}  v\|_{\dot{B}_{2,1}^{0}}\lesssim&\|u\|_{L^\infty}\|v^\ell\|_{\dot{B}_{2,1}^{0}}
\lesssim\|v^\ell\|_{\dot{B}_{2,1}^{0}}\|u\|_{\dot{B}_{p,1}^{\frac {2}{p}}},
\end{align}
and, for  $\frac  1{p*}=\frac  12-\frac1p$,
\begin{align}\label{D116}
\|\dot S_{j_0+1} \dot{T}_{v} u\|_{\dot{B}_{2,1}^{0}}
\lesssim\|v\|_{\dot{B}_{p^*,1}^{\frac {2}{p^*}-1}}\|u\|_{\dot{B}_{p,1}^{\frac {2}{p}}}
\lesssim\|v\|_{\dot{B}_{p,1}^{\frac {2}{p}-1}}\|u\|_{\dot{B}_{p,1}^{\frac{2}{p}}}.
\end{align}
For the reminder term $\|\dot S_{j_0+1}  \dot{R}(v,u)\|_{\dot{B}_{2,1}^{0}}$, we cannot use Lemma \ref{fangji} directly, however, in view of the fact that $1\leq \frac{p}{2} <2$ and $ \frac{4}{p}- 1 >0 $, there holds
\begin{align}\label{D222}
\|\dot S_{j_0+1}  \dot{R}(v,u)\|_{\dot{B}_{2,1}^{0}}
\lesssim\|\dot{R}(v,u)\|_{\dot{B}_{{p}/{2},1}^{\frac {4}{p}-1}}
\end{align}
from which and Lemma \ref{fangji}, we get
\begin{align}\label{D323}
\|\dot S_{j_0+1}  \dot{R}(v,u)\|_{\dot{B}_{2,1}^{0}}
\lesssim\|v\|_{\dot{B}_{p,1}^{\frac {2}{p}-1}}\|u\|_{\dot{B}_{p,1}^{\frac{2}{p}}}.
\end{align}
By Lemma 6.1 in \cite{helingbing},  the term with the commutator can be bounded
\begin{align}\label{D11}
\|[\dot S_{j_0+1} ,\dot{T}_{u}]v\|_{\dot{B}_{2,1}^{0}}\lesssim&\|\nabla u\|_{\dot{B}_{p^*,1}^{\frac {2}{p^*}-1}}\|v\|_{\dot{B}_{p,1}^{\frac {2}{p}-1}}
\lesssim\|v\|_{\dot{B}_{p,1}^{\frac {2}{p}-1}}\|u\|_{\dot{B}_{p,1}^{\frac {2}{p}}}.
\end{align}
Thus, the combination of (\ref{D9})--(\ref{D11}) shows the validity of  \eqref{pengyou}.
\end{proof}

We also need the following classical commutator's estimate.
\begin{lemma}{\rm(\cite[Lemma 2.100]{bcd})}\label{jiaohuanzi}
Let $1\leq p\leq \infty$, $-2\min\left\{\frac1p,1-\frac{1}{p}\right\}<s\leq \frac 2p$. For any
$v\in \dot{B}_{p,1}^{s}(\R^2)$ and $\nabla u\in \dot{B}_{p,1}^{\frac {2}{p}}(\R^2)$,
 there holds
$$
\big\|[\dot{\Delta}_j,u\cdot \nabla ]v\big\|_{L^p}\lesssim d_j 2^{-js}\|\nabla u\|_{\dot{B}_{p,1}^{\frac {2}{p}}}\|v\|_{\dot{B}_{p,1}^{s}}
$$
where $(d_j)_{\ell^1}=1$.
\end{lemma}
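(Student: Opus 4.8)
The plan is to run the standard paradifferential analysis of a transport commutator. Writing $u\cdot\nabla v=u^{k}\partial_{k}v$ (the sum over $k=1,2$ being understood below), I would apply Bony's decomposition \eqref{bony} both to $u^{k}\partial_{k}v$ and to $u^{k}\partial_{k}\dot\Delta_j v$ and subtract; since $\dot\Delta_j$ commutes with $\partial_k$, the two paraproducts in which the low-frequency factor carries the extra block $\dot\Delta_j$ merge into a single commutator, leaving
\begin{align*}
[\dot\Delta_j,u\cdot\nabla]v
&=\underbrace{[\dot\Delta_j,\dot T_{u^{k}}]\partial_{k}v}_{\mathrm{I}_j}
+\underbrace{\dot\Delta_j\dot T_{\partial_{k}v}u^{k}}_{\mathrm{II}_j}
+\underbrace{\dot\Delta_j\dot R(u^{k},\partial_{k}v)}_{\mathrm{III}_j}\\
&\quad-\underbrace{\dot T_{\partial_{k}\dot\Delta_j v}u^{k}}_{\mathrm{IV}_j}
-\underbrace{\dot R(u^{k},\partial_{k}\dot\Delta_j v)}_{\mathrm{V}_j}.
\end{align*}
With the normalisation $\|\dot\Delta_j v\|_{L^p}=d_j2^{-js}\|v\|_{\dot B^{s}_{p,1}}$, $(d_j)\in\ell^1$ (and the analogous one for $\nabla u$ in $\dot B^{2/p}_{p,1}$; from now on $(d_j)$ denotes a generic $\ell^1$ sequence that may change from line to line), it suffices to bound each of the five pieces in $L^p$ by a quantity of the form $d_j\,2^{-js}\|\nabla u\|_{\dot B^{2/p}_{p,1}}\|v\|_{\dot B^{s}_{p,1}}$; summing the resulting five sequences and dividing by their total $\ell^1$ norm then produces the $d_j$ of the statement.

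The paraproduct-commutator term $\mathrm{I}_j$ is the only one requiring the genuine commutator structure. Realising $\dot\Delta_j$ as convolution against $2^{2j}h(2^{j}\cdot)$ for a fixed Schwartz function $h$, one has, for the finitely many $j'$ with $|j'-j|\lesssim1$,
\begin{align*}
[\dot\Delta_j,\dot S_{j'-1}u^{k}]\dot\Delta_{j'}\partial_{k}v(x)
&=\int_{\R^2}2^{2j}h(2^{j}z)\big(\dot S_{j'-1}u^{k}(x-z)-\dot S_{j'-1}u^{k}(x)\big)\dot\Delta_{j'}\partial_{k}v(x-z)\,dz ,
\end{align*}
so a first-order Taylor expansion together with Young's inequality give $\|[\dot\Delta_j,\dot S_{j'-1}u^{k}]\dot\Delta_{j'}\partial_{k}v\|_{L^p}\lesssim2^{-j}\|\nabla u\|_{L^\infty}\,2^{j'}\|\dot\Delta_{j'}v\|_{L^p}$; summing over $j'$ and invoking the embedding $\dot B^{2/p}_{p,1}\hookrightarrow L^\infty$ and the normalisation of $v$ bounds $\mathrm{I}_j$ as required, with no restriction on $s$. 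The three remaining terms $\mathrm{II}_j,\mathrm{IV}_j,\mathrm{V}_j$ are dealt with by crude Bernstein estimates together with $\|\dot\Delta_{j'}u\|_{L^p}\lesssim d_{j'}2^{-j'(1+2/p)}\|\nabla u\|_{\dot B^{2/p}_{p,1}}$: the spectral supports force the summation in $\mathrm{IV}_j$ to run over $j'\gtrsim j$ and the one in $\mathrm{V}_j$ over $j'\sim j$, so the corresponding series converge since $1+2/p>0$; and in $\mathrm{II}_j$ one bounds $\|\dot S_{j'-1}\partial_{k}v\|_{L^\infty}$ by $\sum_{\ell\le j'-2}2^{\ell(1+2/p-s)}d_\ell\|v\|_{\dot B^{s}_{p,1}}$, a series that converges and is $\lesssim2^{j'(1+2/p-s)}\|v\|_{\dot B^{s}_{p,1}}$ precisely because $s\le 2/p<1+2/p$. (The endpoint restriction $s\le2/p$ is in any case the natural one, being exactly what makes $\dot B^{s}_{p,1}(\R^2)$ a genuine space of tempered distributions.)

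The main obstacle is $\mathrm{III}_j$: the remainder $\dot R(u^{k},\partial_{k}v)=\sum_{j'}\dot\Delta_{j'}u^{k}\,\widetilde{\dot\Delta}_{j'}\partial_{k}v$ receives contributions from \emph{all} scales $j'\ge j-C$ while the available regularity is tight. Using $\|\dot\Delta_{j'}u^{k}\|_{L^p}\lesssim d_{j'}2^{-j'(1+2/p)}\|\nabla u\|_{\dot B^{2/p}_{p,1}}$ and $\|\widetilde{\dot\Delta}_{j'}\partial_{k}v\|_{L^p}\lesssim d_{j'}2^{j'(1-s)}\|v\|_{\dot B^{s}_{p,1}}$, I would estimate the product of these two dyadic pieces in an auxiliary Lebesgue space matched to $p$ — the product lies in $L^{p/2}$ and is returned to $L^p$ by Bernstein when $p\ge2$, while when $p\le2$ both factors are first estimated in $L^2$ (via Bernstein, since $p\le2$), their product lies in $L^1$, and $\dot\Delta_j\colon L^1\to L^p$ is used — so that the loss of $\kappa:=2\min\{1/p,1-1/p\}$ derivatives in that Bernstein step leads to
\begin{align*}
\|\mathrm{III}_j\|_{L^p}\lesssim\Big(\sum_{j'\ge j-C}d_{j'}\,2^{(j-j')(s+\kappa)}\Big)2^{-js}\,\|\nabla u\|_{\dot B^{2/p}_{p,1}}\|v\|_{\dot B^{s}_{p,1}} .
\end{align*}
The bracketed quantity is the value at $j$ of the convolution of the $\ell^1$ sequence $(d_{j'})$ with the sequence $m\mapsto2^{m(s+\kappa)}\mathbf{1}_{\{m\le C\}}$, and the latter belongs to $\ell^1$ exactly when $s+\kappa>0$, that is, under the hypothesis $s>-2\min\{1/p,1-1/p\}$; hence $\mathrm{III}_j$ also satisfies the required bound. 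The delicate point is precisely this last step — picking the auxiliary exponent correctly in the two cases $p\ge2$ and $p\le2$ and checking that the resulting discrete convolution stays in $\ell^1$ uniformly in $j$ — after which summing $\mathrm{I}_j$ through $\mathrm{V}_j$ and renormalising finishes the proof.
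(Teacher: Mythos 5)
The paper does not prove this lemma but simply cites \cite[Lemma 2.100]{bcd}, and your argument is a correct reconstruction of the standard proof given there: the five-term Bony splitting of the commutator, the first-order Taylor expansion of the convolution kernel for the paraproduct commutator, Bernstein-type bounds for the paraproduct and truncated-remainder pieces, and the auxiliary Lebesgue exponent in the remainder term that produces exactly the threshold $s>-2\min\{1/p,1-1/p\}$. No gaps; this matches the cited reference's approach.
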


Finally, we recall a composition result and the parabolic regularity estimate for the
heat equation to end this section.
  \begin{lemma}\label{fuhe}{\rm(\cite{bcd})}
   Let $G$ with $G(0)=0$ be a smooth function defined on an open interval $I$
of $\R$ containing~$0.$
Then  the following estimates
$$
\|G(f)\|_{\dot B^{s}_{p,1}}\lesssim\|f\|_{\dot B^s_{p,1}}\quad\hbox{and}\quad
\|G(f)\|_{\widetilde{L}^q_T(\dot B^{s}_{p,1})}\lesssim\|f\|_{\widetilde{L}^q_T(\dot B^s_{p,1})}
$$
hold true for  $s>0,$ $1\leq p,\, q\leq \infty$ and
 $f$  valued in a bounded interval $J\subset I.$
\end{lemma}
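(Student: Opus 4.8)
The plan is to reduce the composition estimate to the standard paralinearization inequality plus the scalar algebra structure of Besov spaces. First I would recall the paralinearization theorem for composition (Bony's decomposition applied to $G(f)$): one writes $G(f) = \dot T_{G'(f)}f + \mathcal{R}$, where the remainder $\mathcal{R}$ gains regularity. The key is that since $f$ takes values in a bounded interval $J$ on which $G$ and all its derivatives are bounded, $G'(f)$ is a bounded function, and moreover $G(f)$ itself is bounded, so both are legitimate tempered distributions whose low frequencies are controlled. The paraproduct estimate from Lemma \ref{fangji} gives $\|\dot T_{G'(f)}f\|_{\dot B^s_{p,1}} \lesssim \|G'(f)\|_{L^\infty}\|f\|_{\dot B^s_{p,1}}$, and $\|G'(f)\|_{L^\infty}$ is bounded by a constant depending only on $G$ and $J$.

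Next I would handle the remainder term $\mathcal{R} = G(f) - \dot T_{G'(f)}f$. The standard approach is to telescope: write $\dot\Delta_j G(f)$ using the fact that $\dot S_{j'+1}f - \dot S_{j'}f = \dot\Delta_{j'}f$, expand $G(\dot S_{j'+1}f) - G(\dot S_{j'}f)$ via the fundamental theorem of calculus as $\big(\int_0^1 G'(\dot S_{j'}f + \tau\dot\Delta_{j'}f)\,d\tau\big)\dot\Delta_{j'}f$, and compare with the paraproduct. The difference involves terms like $[G'(\dot S_{j'}f) - \dot S_{j'}G'(f)]\dot\Delta_{j'}f$ plus a quadratically small Taylor correction; one estimates these using Bernstein's lemma (Lemma \ref{bernstein}), the boundedness of $G'$ and $G''$ on $J$, and the fact that $\|\dot\Delta_{j'}f\|_{L^\infty}$ is summable-weighted so that no derivative loss occurs for $s>0$. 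Summing the dyadic pieces with the $\ell^1$ weight $2^{js}$ and using $s>0$ to make the low-frequency geometric series converge yields $\|\mathcal{R}\|_{\dot B^s_{p,1}} \lesssim \|f\|_{\dot B^s_{p,1}}$, with constant depending on $\sup_J(|G'|+|G''|)$. For the Chemin--Lerner version $\widetilde L^q_T(\dot B^s_{p,1})$, the same decomposition works verbatim: one takes the $L^q_T L^p$ norm of each $\dot\Delta_j$ piece before summing in $j$, using Hölder in time where a product of a bounded (in $t,x$) coefficient and a dyadic block of $f$ appears, so the $L^q$ norm falls entirely on the $f$-factor.

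The main obstacle is the bookkeeping in the remainder: one must verify that every term produced by comparing the fundamental-theorem-of-calculus expansion with the paraproduct either (i) has a spectral support in an annulus of size $\sim 2^j$ so Bernstein applies cleanly, or (ii) is lower order and summable, and in particular that the term $[\dot S_{j'}G'(f) - G'(\dot S_{j'}f)]$ is controlled in $L^\infty$ by $\|f\|_{\dot B^{s}_{p,1}}$-type quantities times a bounded function of $f$ — this is where the hypothesis $s>0$ (ensuring $\dot B^s_{p,1} \hookrightarrow L^\infty$ after the appropriate shift, hence $f \in L^\infty$ consistently) and the $\ell^1$ summability are genuinely used. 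Since this is the classical composition lemma from \cite{bcd}, I would in fact just cite the relevant statement there and only indicate the Chemin--Lerner adaptation, which requires no new idea beyond inserting the time-Lebesgue norm at the dyadic level and applying Hölder in $t$.
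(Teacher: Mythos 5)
The paper does not prove this lemma at all: it is quoted verbatim from \cite{bcd} (the continuity result for left composition, together with its Chemin--Lerner analogue), so there is no in-paper argument to compare yours against. Your sketch is the standard proof from that reference --- Meyer's first linearization $G(f)=\sum_{j'}\bigl(G(\dot S_{j'+1}f)-G(\dot S_{j'}f)\bigr)=\sum_{j'}m_{j'}\dot\Delta_{j'}f$ with $m_{j'}=\int_0^1G'(\dot S_{j'}f+\tau\dot\Delta_{j'}f)\,d\tau$, splitting the sum in $\dot\Delta_jG(f)$ according to $j'\le j$ versus $j'>j$, using $\|m_{j'}\|_{L^\infty}\le\sup_J|G'|$ for the high-frequency part and Bernstein plus $\|\nabla m_{j'}\|_{L^\infty}\lesssim 2^{j'}\sup_J|G''|\,\|f\|_{L^\infty}$ for the low-frequency part --- and it is correct in outline; the Chemin--Lerner version indeed follows by taking $L^q_T L^p$ at the dyadic level before summing, since the coefficients $m_{j'}$ are uniformly bounded in $(t,x)$. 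Two small remarks: (i) your opening paraproduct decomposition $G(f)=\dot T_{G'(f)}f+\mathcal R$ is redundant here, since the telescoping alone proves the stated estimate and the paralinearization theorem is a strictly stronger result established by the same telescoping; (ii) the Bernstein step for the $j'\le j$ block closes directly only for $0<s<1$ (the factor $2^{(j-j')(s-1)}$ must be summable over $j'\le j$), and for general $s>0$ one must iterate, e.g.\ by induction on $\lfloor s\rfloor$ after differentiating $G(f)$, which is the extra step hidden in your ``bookkeeping'' paragraph. Since you ultimately defer to \cite{bcd}, exactly as the paper does, neither point is a genuine gap.
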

\begin{lemma} [\cite{bcd}]\label{heat}
Let $\sigma\in \R$,  $T>0$, $1\leq p,r\leq\infty$ and $1\leq q_{2}\leq q_{1}\leq\infty$.  Let $u$  satisfy the heat equation
$$\partial_tu-\Delta u=f,\quad
u|_{t=0}=u_0.$$
Then  there holds the following a priori estimate
\begin{align*}
\|u\|_{\widetilde L_{T}^{q_1}(\dot B^{\sigma+\frac 2{q_1}}_{p,r})}\lesssim
\|u_0\|_{\dot B^\sigma_{p,r}}+\|f\|_{\widetilde L^{q_2}_{T}(\dot B^{\sigma-2+\frac 2{q_2}}_{p,r})}.
\end{align*}
\end{lemma}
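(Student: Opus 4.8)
\textbf{Proof strategy for Theorem \ref{dingli4}.}
The plan is to exploit the decoupling already noted in the text. Setting $\Pi:=P+\tfrac12 b^2$ and using $\div\u=0$ to rewrite $\div(b\u)=\u\cdot\nabla b$, the system \eqref{nmm2} splits into the inhomogeneous incompressible Navier--Stokes system
\begin{align*}
\begin{cases}
\partial_t\rho+\u\cdot\nabla\rho=0,\\
\rho(\partial_t\u+\u\cdot\nabla\u)-\mu\Delta\u+\nabla\Pi=0,\\
\div\u=0,\\
(\rho,\u)|_{t=0}=(\rho_0,\u_0),
\end{cases}
\end{align*}
for the triple $(\rho,\u,\nabla\Pi)$ --- in which $b$ does not appear --- together with the linear transport equation
\begin{align*}
\partial_t b+\u\cdot\nabla b=0,\qquad b|_{t=0}=b_0 .
\end{align*}
So I would first construct the global solution $(a,\u,\nabla\Pi)$ with $a=\rho-1$, and then recover $b$ by solving the transport equation along the flow of $\u$.

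For the first step I would invoke (and, if a self-contained exposition is wanted, reproduce) the now-classical global well-posedness theory for the $2$-D inhomogeneous incompressible Navier--Stokes equations in critical Besov spaces, in the spirit of Abidi \cite{abidi}, Danchin, and Xu--Zhai \cite{xuhuan}: for $p\in(1,4)$, $a_0\in\dot B^{\frac2p}_{p,1}(\R^2)$ with $1+a_0$ bounded away from zero and $\u_0\in\dot B^{\frac2p-1}_{p,1}(\R^2)$ divergence free, there is a unique global solution with $a\in \mathcal C(\R^+;\dot B^{\frac2p}_{p,1})\cap\widetilde L^\infty(\dot B^{\frac2p}_{p,1})$, $\u\in \mathcal C(\R^+;\dot B^{\frac2p-1}_{p,1})\cap\widetilde L^\infty(\dot B^{\frac2p-1}_{p,1})\cap L^1(\dot B^{\frac2p+1}_{p,1})$ and $\nabla\Pi\in L^1(\dot B^{\frac2p-1}_{p,1})$. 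The construction mirrors Proposition \ref{dingli1} (local existence via a contraction mapping in the same spaces, here with the $\tfrac12\nabla(b+1)^2$ term dropped) followed by a global continuation based on a priori bounds: the two-dimensional energy identity $\frac{d}{dt}\int\rho|\u|^2+2\mu\int|\nabla\u|^2=0$, the transport bound for $\rho$ in $L^\infty$ and in $\dot B^{\frac2p}_{p,1}$, a Stokes-system analogue of the maximal regularity estimate in Lemma \ref{heat} (applied after projecting onto $\mathbb P$ and $\mathbb Q$), together with logarithmic interpolation inequalities in $\R^2$. The $t^{1/2}$ in the final bound enters through Cauchy--Schwarz, $\|\nabla\u\|_{L^1_tL^2}\le t^{1/2}\|\nabla\u\|_{L^2_tL^2}$; the inner exponential through the transport estimate for the density (the $\dot B^{\frac2p}_{p,1}$-norm of $a$ grows like $\exp(C\int_0^t\|\nabla\u\|_{\dot B^{\frac2p}_{p,1}}\,d\tau)$, by Lemma \ref{fuhe}-type arguments); and the outer exponential through a Gronwall step on the momentum equation. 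Altogether one obtains $\|(a,\nabla\u,\nabla\Pi)\|_{(\cdots)}\le C\exp\!\big(C\exp(Ct^{1/2})\big)$.

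For the second step, since $\u\in L^1_t(\dot B^{\frac2p+1}_{p,1})\hookrightarrow L^1_t(\dot B^1_{\infty,1})\hookrightarrow L^1_t(\mathrm{Lip})$ and $\div\u=0$, the standard transport estimate in Besov spaces (e.g. \cite[Theorem~3.14]{bcd}) yields a unique $b\in\widetilde{\mathcal C}_b(\R^+;\dot B^{\frac2p}_{p,1})$ solving $\partial_t b+\u\cdot\nabla b=0$ with
\begin{align*}
\|b\|_{\widetilde L^\infty_t(\dot B^{\frac2p}_{p,1})}\le \|b_0\|_{\dot B^{\frac2p}_{p,1}}\exp\!\Big(C\int_0^t\|\nabla\u\|_{\dot B^{\frac2p}_{p,1}}\,d\tau\Big),
\end{align*}
whose right-hand side is again of size $\exp(C\exp(Ct^{1/2}))$ by the first step; moreover $\|b(t)\|_{L^\infty}=\|b_0\|_{L^\infty}$. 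Time-continuity of $b$ in $\dot B^{\frac2p}_{p,1}$ and of $\u$ in $\dot B^{\frac2p-1}_{p,1}$ follows from the equations, $a\in\mathcal C(\R^+;\dot B^{\frac2p}_{p,1})$ is part of the inhomogeneous Navier--Stokes output, and uniqueness of $(a,\u,\nabla\Pi,b)$ reduces to uniqueness for the inhomogeneous Navier--Stokes system (valid for $p<4$ in $2$-D) followed by uniqueness for the linear transport equation defining $b$. Combining the three norm bounds then gives the stated double-exponential estimate.

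The genuinely delicate ingredient is not new to this paper: it is the global-in-time control of the critical norms in the $2$-D inhomogeneous incompressible Navier--Stokes step --- in particular combining the (dimensionally critical) energy estimate with the transport structure of the density and the low-regularity pressure estimate, which is precisely where the double-exponential growth is produced. Once that input is granted, the magnetic field $b$ is a spectator, passively transported by $\u$, and the only remaining work is bookkeeping of the Chemin--Lerner norms and noting that $1+a$ stays bounded away from zero (automatic, since $a$ is transported by a field that is Lipschitz in space, integrable in time). Accordingly I would organize the write-up as: (i) the decoupling; (ii) the inhomogeneous Navier--Stokes solution with its double-exponential bound (cited, with a sketch of the continuation argument); (iii) the transport estimate for $b$; (iv) assembling continuity and uniqueness --- consistent with the paper's remark that a detailed proof is omitted.
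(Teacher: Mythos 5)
Your proposal does not address the statement it was supposed to prove. The statement is Lemma \ref{heat}: the a priori estimate
\begin{align*}
\|u\|_{\widetilde L_{T}^{q_1}(\dot B^{\sigma+\frac 2{q_1}}_{p,r})}\lesssim
\|u_0\|_{\dot B^\sigma_{p,r}}+\|f\|_{\widetilde L^{q_2}_{T}(\dot B^{\sigma-2+\frac 2{q_2}}_{p,r})}
\end{align*}
for the linear heat equation in Chemin--Lerner spaces, quoted from \cite{bcd}. What you wrote is a proof strategy for Theorem \ref{dingli4} (global well-posedness of the reduced inhomogeneous incompressible MHD system \eqref{nmm2}). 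That is an entirely different assertion, and in fact your argument \emph{uses} "a Stokes-system analogue of the maximal regularity estimate in Lemma \ref{heat}" as one of its ingredients, so it presupposes the very statement under consideration rather than establishing it. Nothing in your text bears on the heat estimate itself: there is no frequency localization, no use of the heat semigroup, and no appeal to the structure of the linear equation $\partial_t u-\Delta u=f$.

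For the record, the proof of Lemma \ref{heat} (as in \cite{bcd}) goes as follows: apply $\dot\Delta_j$ to the equation and write Duhamel's formula $\dot\Delta_j u(t)=e^{t\Delta}\dot\Delta_j u_0+\int_0^t e^{(t-\tau)\Delta}\dot\Delta_j f(\tau)\,d\tau$; use the smoothing estimate $\|e^{t\Delta}\dot\Delta_j v\|_{L^p}\le C e^{-c t2^{2j}}\|\dot\Delta_j v\|_{L^p}$, valid because $\dot\Delta_j v$ is spectrally supported in an annulus of size $2^j$; take the $L^{q_1}(0,T)$ norm in time, treating the Duhamel term by Young's convolution inequality in $t$ with exponents satisfying $1+\frac{1}{q_1}=\frac{1}{q_2}+\frac{1}{q_3}$, which produces the factors $2^{-2j/q_1}$ and $2^{2j(1-1/q_3)}=2^{2j(\frac1{q_2}-\frac1{q_1})}$ accounting for the regularity indices $\sigma+\frac{2}{q_1}$ and $\sigma-2+\frac{2}{q_2}$; finally multiply by $2^{j(\sigma+2/q_1)}$ and take the $\ell^r(\Z)$ norm. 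Your discussion of Theorem \ref{dingli4} may well be a reasonable outline of that other result (the decoupling into inhomogeneous Navier--Stokes plus a passive transport equation for $b$ is indeed the route the paper indicates), but as a proof of Lemma \ref{heat} it is vacuous.
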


\bigskip

\section{The proof of Proposition \ref{dingli1}}

We prove Proposition \ref{dingli1} by a fixed point theorem under the  Lagrangian coordinates. We follow the paper \cite{danchin2014} closely and only give the sketch of the proof.

{\bf Step 1.} First, we convert \eqref{0mm2} into is Lagrangian formulation. For this, we need to introduce some notations. For a vector $\w=\w(x)=(w_1,w_2)$, $\nabla_x\w$ denotes the matrix $(\partial_{x_i}w_j)_{ij}$ and $D_x\w=(\nabla_x\w)^{\intercal}$ (i.e., the transpose of $\nabla_x\w$). We may also frequently write $\nabla\w$ and $D\w$ when it is clear which space variable $\w$ depends on.

If $\u=\u(t,x)$ is a $C^1$ vector field, it uniquely determines a trajectory $X(t,\cdot)$, defined by the ODE
\begin{eqnarray}\label{define trajectory via Eulerian velocity}
\left\{\begin{aligned}
&\frac{d}{dt}X(t,y)=\u(t,X(t,y)),\\
&X(0,y)=y.
\end{aligned}\right.
\end{eqnarray}
Moreover, $X(t,\cdot)$ is a $C^1$-diffeomorphism over $\R^2$ for every $t\ge0$.

We are about to reformulate \eqref{0mm2} using the following unknowns in Lagrangian coordinates:
\begin{align}\label{from Eulerian to Lagrangian}
\bar{\rho}(t,y)\stackrel{\mathrm{def}}{=}\rho\big(t,X(t,y)\big),\quad\bar{m}(t,y)\stackrel{\mathrm{def}}{=}m\big(t,X(t,y)\big), \quad
\hbox{and}\quad\bar{\u}(t,y)\stackrel{\mathrm{def}}{=}\u\big(t,X(t,y)\big).
\end{align}
We may keep in mind that now $X$ only depends on $\bar{\u}$ since
\begin{align}\label{trj2}
X(t,y)=y+\int_0^t\bar{\u}(\tau,y)\,d\tau.
\end{align}

Next, let us introduce $J_{\bar{\u}}(t,y)=\det DX(t,y)$. Then $\eqref{0mm2}_1$ and $\eqref{0mm2}_3$ imply, respectively,
\begin{align}\label{2const}
J_{\bar{\u}}\bar{\rho}\equiv\rho_0,\ \ {\rm and}\ \ J_{\bar{\u}}\bar{m}\equiv m_0.
\end{align}
To reformulate $\eqref{0mm2}_2$, we further introduce
$A_{\bar{\u}}(t,y)=\big(DX(t,y)\big)^{-1}$ and $\mathscr{A}_{\bar{\u}}(t,y)={\mathop{\mbox{\rm adj}}} DX(t,y)$ (the adjugate of $D X$, i.e.,  $\mathscr{A}_{\bar{\u}}=J_{\bar{\u}}A_{\bar{\u}}$).
As in \cite{danchin2014}, evaluating $\eqref{0mm2}_2$ at $(t,X(t,y))$, multiplying the resulting equation by $J_{\bar{\u}}$, and using \eqref{2const}, we have
\begin{eqnarray}\label{Lagrangian formulation of compressible pressureless flow}
\left\{\begin{aligned}
&\rho_0\partial_t \bar{\u}-\mu\div(\mathscr{A}_{\bar{\u}}A_{\bar{\u}}^{\intercal}\nabla\bar{\u})
-(\mu+\lambda)\mathscr{A}_{\bar{\u}}^{\intercal}\nabla\mathrm{Tr}(A_{\bar{\u}}D\bar{\u})+\mathscr{A}_{\bar{\u}}^{\intercal}\nabla (P(J_{\bar{\u}}^{-1}\rho_0)+\frac12(J_{\bar{\u}}^{-1}m_0)^2)=0,\\
&\bar{\u}|_{t=0}=\u_0.
\end{aligned}\right.
\end{eqnarray}
Here, $\mathrm{Tr}$ denotes the trace of square matrices.

{\bf Step 2.} Linearized system. Note that \eqref{Lagrangian formulation of compressible pressureless flow} is already a determined system with $\bar{\u}$ the only unknown. Since it is fully nonlinear, we need to reformulate it as
\begin{eqnarray}\label{linearized}
\left\{\begin{aligned}
&\rho_0\partial_t \bar{\u}-\mu\Delta\bar{\u}-(\mu+\lambda)\nabla\div\bar{\u}=f(\bar{\u}),\\
&\bar{\u}|_{t=0}=\u_0,
\end{aligned}\right.
\end{eqnarray}
where
\begin{align*}
f(\bar{\u})=&\mu\div((\mathscr{A}_{\bar{\u}}A_{\bar{\u}}^{\intercal}-I)\nabla\bar{\u})
+(\mu+\lambda)(\mathscr{A}_{\bar{\u}}^{\intercal}-I)\nabla\mathrm{Tr}(A_{\bar{\u}}D\bar{\u})\\
&+(\mu+\lambda)\nabla\mathrm{Tr}((A_{\bar{\u}}-I)D\bar{\u})-\mathscr{A}_{\bar{\u}}^{\intercal}\nabla (P(J_{\bar{\u}}^{-1}\rho_0)+\frac12(J_{\bar{\u}}^{-1}m_0)^2)
\end{align*}
and $I$ is the identity matrix.

We need the following well-posedness result for the linearized system.
\begin{theorem}[See \cite{xhjee}]\label{wellposedness for linear system}
Let $1<p<4$. Assume  $\u_0\in \dot{B}_{p,1}^{\frac 2p-1}(\R^2)$,  $\rho_0-1\in \dot{B}_{p,1}^{\frac 2p}(\R^2)$, and $\inf\rho_0>0$. If $f\in L^{1}
([0,T];{\dot{B}}_{p,1}^{\frac2p-1})$ for some positive time $T$, then the system
\begin{eqnarray*}
\left\{\begin{aligned}
&\rho_0\partial_t \bar{\u}-\mu\Delta\bar{\u}-(\mu+\lambda)\nabla\div\bar{\u}=f,\\
&\bar{\u}|_{t=0}=\u_0
\end{aligned}\right.
\end{eqnarray*}
has a unique solution $\bar{\u}$ in the class $C([0,T ];{\dot{B}}_{p,1}^{\frac {2}{p}-1})\cap L^{1}
([0,T];{\dot{B}}_{p,1}^{\frac 2p+1})$.

Moreover, we have the global estimate
\begin{align*}
\|\bar{\u}\|_{L_T^\infty(\dot{B}_{p,1}^{\frac{2}{p}-1})}+\|\partial_t\bar{\u},\Delta\bar{\u}\|_{L_T^1(\dot{B}_{p,1}^{\frac{2}{p}-1})}\le C\|\u_0\|_{\dot{B}_{p,1}^{\frac{2}{p}-1}}+C\|f\|_{L_T^1(\dot{B}_{p,1}^{\frac{2}{p}-1})},
\end{align*}
where $C$ depends on $p,\inf\rho_0,\mu,\lambda,\|\rho_0-1\|_{\dot{B}_{p,1}^{\frac2p}}$ but $T$.
\end{theorem}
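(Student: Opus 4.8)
The whole content of the theorem is the a priori estimate; once it is available with a constant independent of $T$, existence will follow from a Friedrichs frequency-truncation scheme together with a compactness argument, and uniqueness as well as the continuity in time will be read off from the same estimate applied to the difference of two solutions. So the plan is to concentrate on proving the estimate.

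First I would treat the model case $\rho_0\equiv1$. Writing $\bar{\u}=\mathbb P\bar{\u}+\mathbb Q\bar{\u}$ and using that $\div\mathbb P\bar{\u}=0$ while $\nabla\div\mathbb Q\bar{\u}=\Delta\mathbb Q\bar{\u}$ (as $\mathbb Q\bar{\u}$ is a gradient), the two components solve decoupled heat equations with positive diffusivities $\mu$ and $\nu:=2\mu+\lambda$, namely $\partial_t\mathbb P\bar{\u}-\mu\Delta\mathbb P\bar{\u}=\mathbb Pf$ and $\partial_t\mathbb Q\bar{\u}-\nu\Delta\mathbb Q\bar{\u}=\mathbb Qf$. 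After a time rescaling, Lemma \ref{heat} (with $\sigma=\frac2p-1$ and $q_2=1$, taking $q_1=\infty$ and $q_1=1$ respectively), together with the boundedness of $\mathbb P,\mathbb Q$ on the relevant spaces, yields $\|\bar{\u}\|_{\widetilde L^\infty_T(\dot B^{\frac2p-1}_{p,1})}+\|\bar{\u}\|_{\widetilde L^1_T(\dot B^{\frac2p+1}_{p,1})}\lesssim\|\u_0\|_{\dot B^{\frac2p-1}_{p,1}}+\|f\|_{L^1_T(\dot B^{\frac2p-1}_{p,1})}$, uniformly in $T$; then $\Delta\bar{\u}$ is controlled since $\|\Delta\bar{\u}\|_{\dot B^{\frac2p-1}_{p,1}}\simeq\|\bar{\u}\|_{\dot B^{\frac2p+1}_{p,1}}$, and $\partial_t\bar{\u}$ from the equation itself.

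Next, for the genuinely variable density I write $\rho_0=1+a_0$ with $a_0\in\dot B^{\frac2p}_{p,1}$ and $\inf(1+a_0)>0$, and set $\mathcal A\bar{\u}:=\mu\Delta\bar{\u}+(\mu+\lambda)\nabla\div\bar{\u}$. The idea is a frequency-localized $L^p$ energy estimate: apply $\dot{\Delta}_j$ to $\rho_0\partial_t\bar{\u}-\mathcal A\bar{\u}=f$, use that $\rho_0$ is time-independent to recast the commutator as $[\dot{\Delta}_j,\rho_0]\partial_t\bar{\u}=\partial_t([\dot{\Delta}_j,\rho_0]\bar{\u})$, test the localized equation against $|\dot{\Delta}_j\bar{\u}|^{p-2}\dot{\Delta}_j\bar{\u}$, and invoke the $L^p$-coercivity of the Lamé operator on spectrally localized vector fields (a standard ingredient of the Besov analysis of compressible Navier--Stokes, cf.\ \cite{danchin2000,helingbing}, where $\mu>0$ and $\nu>0$ are used). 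This produces, for each $j$,
$$\frac{d}{dt}\|\dot{\Delta}_j\bar{\u}\|_{L^p}+c\,2^{2j}\|\dot{\Delta}_j\bar{\u}\|_{L^p}\lesssim\|\dot{\Delta}_jf\|_{L^p}+\big\|[\dot{\Delta}_j,\rho_0]\partial_t\bar{\u}\big\|_{L^p}+(\text{lower-order terms}).$$
Integrating in time, multiplying by $2^{j(\frac2p-1)}$, summing over $j$, estimating the commutator with Lemma \ref{jiaohuanzi} (this is exactly where $\frac2p-1>-2\min\{\frac1p,1-\frac1p\}$, i.e.\ $p<4$, is needed) and substituting $\partial_t\bar{\u}=\tfrac1{\rho_0}(\mathcal A\bar{\u}+f)$ while controlling $\tfrac1{\rho_0}-1$ through the composition Lemma \ref{fuhe}, one is led to an inequality of the shape $\|\bar{\u}\|_{\widetilde L^\infty_T(\dot B^{\frac2p-1}_{p,1})}+\|\bar{\u}\|_{\widetilde L^1_T(\dot B^{\frac2p+1}_{p,1})}\lesssim\|\u_0\|_{\dot B^{\frac2p-1}_{p,1}}+\|f\|_{L^1_T(\dot B^{\frac2p-1}_{p,1})}+\Phi(a_0)\,\|\bar{\u}\|_{L^1_T(\dot B^{\frac2p+1}_{p,1})}$, with $\Phi$ depending on $\|a_0\|_{\dot B^{\frac2p}_{p,1}}$ and $\inf(1+a_0)$.

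The hard part is that this last term cannot be absorbed directly, since $\Phi(a_0)$ need not be small: the variable-coefficient error is of the \emph{same differential order} as the principal term. The resolution — which is the technical heart of the statement and is carried out in \cite{xhjee} — is to split $a_0=a_0^{\flat}+a_0^{\sharp}$ into a smooth, spectrally truncated part $a_0^{\flat}$ and a high-frequency tail $a_0^{\sharp}$ with $\|a_0^{\sharp}\|_{\dot B^{\frac2p}_{p,1}}$ as small as desired; the contribution of $a_0^{\sharp}\partial_t\bar{\u}$ is then genuinely small and absorbable, while the smooth-coefficient problem governed by $1+a_0^{\flat}$ is handled as a perturbation of the constant-coefficient estimate above, at the price of a final constant depending on $\|a_0\|_{\dot B^{\frac2p}_{p,1}}$, exactly as stated. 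Feeding this back closes the estimate uniformly in $T$. Finally, with the a priori bound in hand, existence in $C([0,T];\dot B^{\frac2p-1}_{p,1})\cap L^1([0,T];\dot B^{\frac2p+1}_{p,1})$ follows by a Friedrichs scheme and compactness, the time-continuity from $\partial_t\bar{\u}\in L^1_T(\dot B^{\frac2p-1}_{p,1})$, and uniqueness from the estimate applied to the difference $w$ of two solutions with the same data (which solves $\rho_0\partial_tw-\mathcal Aw=0$, $w|_{t=0}=0$, hence $w\equiv0$).
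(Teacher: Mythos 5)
This theorem is not proved in the paper: it is quoted from \cite{xhjee}, where the estimate is obtained by constructing the fundamental matrix of the Lam\'e operator with rough (time-independent) coefficients and proving Gaussian bounds on it; the time-independence of $C$ comes from those global-in-time kernel bounds. (The paper only adds the remark that the case $p=2$, not treated in \cite{xhjee}, follows from the same argument because $\rho_0-1\in\dot B^{2/p}_{p,1}$ is more regular than what \cite{xhjee} requires.) Your energy-method program is therefore a genuinely different route, and its first half is fine: the $\mathbb P$/$\mathbb Q$ decoupling into heat equations with diffusivities $\mu$ and $2\mu+\lambda$, together with Lemma \ref{heat}, does give the constant-coefficient estimate uniformly in $T$.

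The gap is in the variable-coefficient step, and it sits exactly at the point you identify as ``the technical heart.'' After writing $a_0=a_0^{\flat}+a_0^{\sharp}$, the tail $a_0^{\sharp}$ is indeed absorbable, but the claim that the problem with coefficient $1+a_0^{\flat}$ is ``handled as a perturbation of the constant-coefficient estimate'' does not go through: $a_0^{\flat}$ is smooth but has no smallness in $L^\infty$ or in the relevant multiplier norm, so the term $a_0^{\flat}\partial_t\bar{\u}$ (equivalently $I(a_0^{\flat})\mathcal A\bar{\u}$ after substituting the equation) is of the same size and order as the left-hand side and cannot be absorbed. The standard way out is to freeze the coefficient on small balls where $a_0^{\flat}$ oscillates little and patch; but the patching produces lower-order terms that are closed by Gronwall, and this is precisely what yields a constant depending on $T$ --- the situation of \cite{danchin2014} that the theorem is specifically designed to avoid. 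So your sketch asserts the $T$-independent conclusion without supplying the mechanism that delivers it; that mechanism is the content of \cite{xhjee}. Two minor further points: Lemma \ref{jiaohuanzi} is a transport-commutator estimate for $[\dot\Delta_j,\u\cdot\nabla]$ and does not directly bound $[\dot\Delta_j,\rho_0]\partial_t\bar{\u}$; and the restriction $p<4$ enters through the product law $\dot B^{2/p}_{p,1}\cdot\dot B^{2/p-1}_{p,1}\hookrightarrow\dot B^{2/p-1}_{p,1}$ (Lemma \ref{law} with $s_1+s_2=\frac4p-1>0$), not through the commutator lemma.
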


In fact, the author in \cite{xhjee} did not discuss the case $p=2$. However, our regularity of the initial density $\rho_0$ is much higher than that in \cite{xhjee}. Then one can follow the argument in \cite{xhjee} to show Theorem \ref{wellposedness for linear system} for $p=2$. On the other hand, we can also use the linear theory established in \cite{danchin2014} to show the local well-posedness of \eqref{Lagrangian formulation of compressible pressureless flow}. But in \cite{danchin2014}, the constant $C$ in the linear estimate depends on $T$.

{\bf Step 3.} Fixed point argument. We shall perform the fixed point theorem in the Banach space $E_p(T)$ defined as
\begin{align*}
E_p(T)\stackrel{\mathrm{def}}{=}\left\{\bar{\u}\in C_b([0,T];\Dot{B}_{p,1}^{\frac{2}{p}-1})|\partial_t\bar{\u}\in L^1([0,T];\Dot{B}_{p,1}^{\frac{2}{p}-1}),\bar{\u}\in L^1([0,T];\Dot{B}_{p,1}^{\frac2p+1})\right\}
\end{align*}
endowed with the norm
\begin{align*}
\|\bar{\mathbf{u}}\|_{E_p}\stackrel{\mathrm{def}}{=}
\|\bar{\u}\|_{L_T^\infty(\Dot{B}_{p,1}^{\frac{2}{p}-1})}+\|\partial_t\bar{\u},\Delta \bar{\u}\|_{L_T^1(\Dot{B}_{p,1}^{\frac{2}{p}-1})}.
\end{align*}
We need the nonlinear estimates for $f(\bar{\u})$ when $\bar{\u}\in E_p(T)$ satisfying $\|\Delta \bar{\u}\|_{L_T^1(\Dot{B}_{p,1}^{\frac{2}{p}-1})}$ sufficiently small. So as in \cite{danchin2014}, we use the estimates in the appendix therein and product laws in Besov spaces to get
\begin{align}\label{nonlinear estimate 1}
\|f(\bar{\u})\|_{L_T^1(\Dot{B}_{p,1}^{\frac{2}{p}-1})}\le C\|\Delta\bar{\u}\|_{L_T^1(\Dot{B}_{p,1}^{\frac{2}{p}-1})}^2+CT\|\rho_0-1,m_0-1\|_{\Dot{B}_{p,1}^{\frac2p}}
\end{align}
Similarly, if both $\|\Delta \bar{\u}\|_{L_T^1(\Dot{B}_{p,1}^{\frac{2}{p}-1})}$ and $\|\Delta \bar{\w}\|_{L_T^1(\Dot{B}_{p,1}^{\frac{2}{p}-1})}$ are small, and if $T$ is also small, it holds that
\begin{align}\label{nonlinear estimate 2}
\|f(\bar{\u})-f(\bar{\w})\|_{L_T^1(\Dot{B}_{p,1}^{\frac{2}{p}-1})}\le \delta\|\bar{\u}-\bar{\w}\|_{E_p}
\end{align}
where $\delta$ is a small number.

Based on \eqref{nonlinear estimate 1}, \eqref{nonlinear estimate 2} and Theorem \ref{wellposedness for linear system}, the standard contraction mapping theorem guarantees a unique solution $\bar{\u}$ of \eqref{linearized} (hence \eqref{Lagrangian formulation of compressible pressureless flow}) in $E_p(T)$ provided $T$ is sufficiently small.

{\bf Step 4.} Back to the Euler coordinates. We can go back to the Euler coordinates through the inverse of $X$, where $X$ is defined by \eqref{trj2}. This gives the existence part of Proposition \ref{dingli1}. The uniqueness part can be proved by repeating Step 1-Step 3.

\bigskip

\section{The proof of  Theorem \ref{dingli2}}
In this section, we complete the proof of Theorem \ref{dingli2} in the following three subsections.
To find the hidden dissipation of the system \eqref{0mm2} and to avoid tedious calculations we may assume
that $\gamma=2$ (the case that $\gamma=1$ is much more easier), since the other cases can be essentially reduced to this case. We introduce an  unknown good function $\varphi$
as
\begin{align}\label{tade}
\varphi\stackrel{\mathrm{def}}{=}P +\frac12 m^2-\frac32.
\end{align}
Direct calculations show that
$(\varphi, \u)$ satisfies
\begin{eqnarray}\label{mm3}
\left\{\begin{aligned}
&\partial_t\varphi+3\div \u+\u\cdot\nabla \varphi+2\varphi\,\div \u=0,\\
&\partial_t\u + \u\cdot \nabla \u -  \mu\Delta \u - (\lambda+\mu)\nabla \div \u+\nabla \varphi =\mathbf{F}(a,\u,{\varphi}),\\
&\varphi|_{t=0}=\varphi_0\stackrel{\mathrm{def}}{=}a_0^2+2a_0 +\frac12 b_0^2+b_0,\quad \u|_{t=0}=\u_0,
\end{aligned}\right.
\end{eqnarray}
where
\begin{align}\label{}
\mathbf{F}(a,\u,{\varphi})\stackrel{\mathrm{def}}{=}&I(a)\nabla{\varphi}-I(a)(\mu\Delta \u+(\lambda+\mu)\nabla\div \u)\quad\hbox{with}\quad I(a)\stackrel{\mathrm{def}}{=}\frac{a}{1+a}.
\end{align}

Throughout we make the assumption that
\begin{equation}\label{eq:smalla}
\sup_{t\in\R_+,\, x\in\R^2} |a(t,x)|\leq \frac12
\end{equation}
which will enable us to use freely the composition estimate stated in Lemma \ref{fuhe}.
Note that as $\dot B^{\frac2p}_{p,1}(\R^2)\hookrightarrow L^\infty(\R^2),$ Condition \eqref{eq:smalla} will be ensured by the fact that the constructed solution has small norm in $\dot B^{\frac2p}_{p,1}(\R^2)$.

\subsection{Low-frequency estimates}
To study the coupling among $a,\varphi$ and $\q\u,$ it
is convenient to set  $$\q\u=-\Lambda^{-1}\nabla d.$$
 Since
${d}$ and $\mathbb Q \u =\nabla\Delta^{-1}\div \u$ can be converted into each other
by a zeroth-order homogeneous Fourier multiplier, it suffices to bound ${d}$ in order to
control $\mathbb Q \u$.  Now  one can  infer from \eqref{mm3} that
\begin{eqnarray}\label{ping3}
\left\{\begin{aligned}
&\partial_t\varphi+3\Lambda {d}=f_1, \\
&\partial_t{d} - 2\Delta {d}-\Lambda \varphi = f_2
\end{aligned}\right.
\end{eqnarray}
where
\begin{align*}
f_1\stackrel{\mathrm{def}}{=}&-\u\cdot\nabla \varphi-2\varphi\,\div \u,\quad
f_2\stackrel{\mathrm{def}}{=}\Lambda^{-1}\div(-(\u\cdot\nabla \u)+\mathbf{F}(a,\u,{\varphi})).
\end{align*}

In this subsection, we prove the following crucial lemma.
\begin{lemma}\label{dipinlemma}
 For any $t\ge0$, there holds that
\begin{align}\label{ping3-1}
&\norm{(\varphi^{\ell},{d}^{\ell})}_{\widetilde{L}^{\infty}_t(\dot{B}^{0}_{2,1})}
+\norm{(\varphi^{\ell},{d}^{\ell})}_{L^1_t(\dot{B}^{2}_{2,1})}\notag\\
&\quad\lesssim\norm{(\varphi^{\ell}_0,{d}^{\ell}_0)}_{\dot{B}^{0}_{2,1}}
+\norm{((f_{1})^{\ell},(f_{2})^{\ell})}_{L^1_t(\dot{B}^{0}_{2,1})}.
\end{align}
\end{lemma}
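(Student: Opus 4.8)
The estimate \eqref{ping3-1} is a dyadically localized parabolic smoothing estimate for the linear system \eqref{ping3}, in which dissipation is visible only in the $d$-equation (through $-2\Delta d$) while $\varphi$ carries no damping of its own. The gain of two derivatives on $\varphi^\ell$ — passing from $\dot B^0_{2,1}$ to $L^1_t(\dot B^2_{2,1})$ — must therefore come entirely from the essentially skew-symmetric coupling $3\Lambda d\leftrightarrow-\Lambda\varphi$, which at low frequencies transfers the dissipation of $d$ onto $\varphi$: the $2\times2$ symbol of \eqref{ping3} has both eigenvalues of real part comparable to $-|\xi|^2$ precisely when $|\xi|$ is small (one of them degenerates to a constant at high frequencies, which is why the high modes are treated separately). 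The plan is to capture this transfer by building a Lyapunov functional at the level of each dyadic block $\dot\Delta_j$ with $j\le j_0$.

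First I would apply $\dot\Delta_j$ to \eqref{ping3}, set $\varphi_j:=\dot\Delta_j\varphi$, $d_j:=\dot\Delta_j d$, $f_{i,j}:=\dot\Delta_j f_i$, and take $L^2(\R^2)$ inner products to get
\begin{align*}
&\tfrac12\tfrac{d}{dt}\|\varphi_j\|_{L^2}^2=-3\langle\Lambda d_j,\varphi_j\rangle+\langle f_{1,j},\varphi_j\rangle,\\
&\tfrac12\tfrac{d}{dt}\|d_j\|_{L^2}^2+2\|\Lambda d_j\|_{L^2}^2=\langle\Lambda\varphi_j,d_j\rangle+\langle f_{2,j},d_j\rangle.
\end{align*}
Since $\langle\Lambda d_j,\varphi_j\rangle=\langle\Lambda\varphi_j,d_j\rangle$, the combination $\|\varphi_j\|_{L^2}^2+3\|d_j\|_{L^2}^2$ cancels the cross terms and satisfies $\frac{d}{dt}(\|\varphi_j\|_{L^2}^2+3\|d_j\|_{L^2}^2)+12\|\Lambda d_j\|_{L^2}^2=2\langle f_{1,j},\varphi_j\rangle+6\langle f_{2,j},d_j\rangle$, but it still sees no dissipation of $\varphi_j$. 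To recover it I would bring in the cross term $\langle\varphi_j,\Lambda^{-1}d_j\rangle$; using \eqref{ping3} and $\Lambda^{-1}\Delta=-\Lambda$ one computes
\[
\tfrac{d}{dt}\langle\varphi_j,\Lambda^{-1}d_j\rangle=\|\varphi_j\|_{L^2}^2-3\|d_j\|_{L^2}^2-2\langle\varphi_j,\Lambda d_j\rangle+\langle f_{1,j},\Lambda^{-1}d_j\rangle+\langle\varphi_j,\Lambda^{-1}f_{2,j}\rangle,
\]
so that, for a small parameter $\eta>0$, the functional $\mathcal{L}_j^2:=\|\varphi_j\|_{L^2}^2+3\|d_j\|_{L^2}^2-\eta\,2^{2j}\langle\varphi_j,\Lambda^{-1}d_j\rangle$ obeys
\[
\tfrac{d}{dt}\mathcal{L}_j^2+12\|\Lambda d_j\|_{L^2}^2+\eta2^{2j}\|\varphi_j\|_{L^2}^2=3\eta2^{2j}\|d_j\|_{L^2}^2+2\eta2^{2j}\langle\varphi_j,\Lambda d_j\rangle+\mathcal{R}_j,
\]
with $\mathcal{R}_j$ collecting the five source pairings.

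The next step is routine bookkeeping, and it is here that the restriction $j\le j_0$ is used. By Bernstein one has $\|\Lambda d_j\|_{L^2}\sim2^j\|d_j\|_{L^2}$ and $\|\Lambda^{-1}d_j\|_{L^2}\sim2^{-j}\|d_j\|_{L^2}$, so, choosing $\eta$ (equivalently $2^{j_0}$) small: the terms $3\eta2^{2j}\|d_j\|_{L^2}^2$ and, after Young, $2\eta2^{2j}|\langle\varphi_j,\Lambda d_j\rangle|$ are absorbed into $12\|\Lambda d_j\|_{L^2}^2+\tfrac12\eta2^{2j}\|\varphi_j\|_{L^2}^2$; the cross term in $\mathcal{L}_j^2$ satisfies $|\eta2^{2j}\langle\varphi_j,\Lambda^{-1}d_j\rangle|\le\tfrac12(\|\varphi_j\|_{L^2}^2+3\|d_j\|_{L^2}^2)$, so $\mathcal{L}_j^2\sim\|\varphi_j\|_{L^2}^2+\|d_j\|_{L^2}^2$; and every term of $\mathcal{R}_j$ is bounded by $C(\|f_{1,j}\|_{L^2}+\|f_{2,j}\|_{L^2})(\|\varphi_j\|_{L^2}+\|d_j\|_{L^2})\le C(\|f_{1,j}\|_{L^2}+\|f_{2,j}\|_{L^2})\mathcal{L}_j$. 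This gives $\frac{d}{dt}\mathcal{L}_j^2+c\,2^{2j}\mathcal{L}_j^2\le C(\|f_{1,j}\|_{L^2}+\|f_{2,j}\|_{L^2})\mathcal{L}_j$, hence (dividing by $\mathcal{L}_j$, e.g.\ after replacing it by $(\mathcal{L}_j^2+\varepsilon^2)^{1/2}$ and letting $\varepsilon\to0$) $\frac{d}{dt}\mathcal{L}_j+c'2^{2j}\mathcal{L}_j\le C(\|f_{1,j}\|_{L^2}+\|f_{2,j}\|_{L^2})$. Integrating in time (the dissipative term has a good sign) and using $\mathcal{L}_j\sim\|\varphi_j\|_{L^2}+\|d_j\|_{L^2}$ yields, for each $j\le j_0$,
\[
\sup_{\tau\in[0,t]}\!\big(\|\varphi_j\|_{L^2}+\|d_j\|_{L^2}\big)+2^{2j}\!\!\int_0^t\!\!\big(\|\varphi_j\|_{L^2}+\|d_j\|_{L^2}\big)\,d\tau\lesssim\|\varphi_j(0)\|_{L^2}+\|d_j(0)\|_{L^2}+\int_0^t\!\!\big(\|f_{1,j}\|_{L^2}+\|f_{2,j}\|_{L^2}\big)\,d\tau.
\]

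Finally I would multiply by $2^{0\cdot j}=1$ and sum over $j\le j_0$. The $\widetilde L^\infty$ norm is exactly an $\ell^1_j$ of $\sup_\tau$, and Fubini turns $\sum_j\int_0^t$ into $\int_0^t\sum_j$, so (using that $\dot\Delta_j$ agrees with the low-frequency truncation up to the harmless overlap recorded in the footnote) the left side becomes $\|(\varphi^\ell,d^\ell)\|_{\widetilde L^\infty_t(\dot B^0_{2,1})}+\|(\varphi^\ell,d^\ell)\|_{L^1_t(\dot B^2_{2,1})}$ and the right side becomes $\|(\varphi^\ell_0,d^\ell_0)\|_{\dot B^0_{2,1}}+\|((f_1)^\ell,(f_2)^\ell)\|_{L^1_t(\dot B^0_{2,1})}$, which is \eqref{ping3-1}. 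The one genuinely non-mechanical step is the choice of $\mathcal{L}_j$: the cross term must carry exactly the weight $2^{2j}$ so that it produces the full parabolic gain on $\varphi$ while all its by-products remain absorbable, and this — together with the fact that absorption forces $\eta2^{2j_0}\ll1$ — is precisely why the argument must live on the low modes.
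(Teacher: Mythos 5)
Your proposal is correct and follows essentially the same route as the paper: a dyadic energy argument built on the cancellation in $\|\varphi_j\|_{L^2}^2+3\|d_j\|_{L^2}^2$, augmented by a cross term that transfers the dissipation of $d$ onto $\varphi$ at low frequencies, followed by the equivalence $\mathcal{L}_j\sim\|(\varphi_j,d_j)\|_{L^2}$, division by $\mathcal{L}_j$, time integration and summation over $j\le j_0$. The only (harmless) difference is the choice of corrector: you use $-\eta\,2^{2j}\langle\varphi_j,\Lambda^{-1}d_j\rangle$ with a small frequency-dependent weight, whereas the paper uses $-\langle d_k,\Lambda\varphi_k\rangle$ together with an extra $\|\Lambda\varphi_k\|_{L^2}^2$ term to cancel the resulting $\langle\Delta d_k,\Lambda\varphi_k\rangle$ contribution; both yield the same Lyapunov inequality \eqref{ping9}.
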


\begin{proof}
Let $k_0$ be some integer.
Setting
$f_k=\dot{\Delta}_kf$, applying the operator $\dot{\Delta}_k\dot{S}_{k_0}$ to the  equations in \eqref{ping3}, then multiplying
$(\ref{ping3})_1$ by $\varphi_{k}^{\ell}/3$, $(\ref{ping3})_2$ by ${d}_{k}^{\ell}$, respectively, we obtain
\begin{align}\label{ping3+1}
&\frac12\frac{d}{dt}\Big(\norm{\varphi_{k}^{\ell}}^2_{L^2}/3+\norm{{d}_k^{\ell}}^2_{L^2}\Big)+2\norm{\Lambda {d}_k^{\ell}}^2_{L^2}=\big\langle(f_{1})^{\ell}_k,\varphi^{\ell}_{k}/3\big\rangle+\big\langle(f_{2})^{\ell}_k ,{d}_k^{\ell}\big\rangle
\end{align}
where we have used the following cancellation
\begin{align}\label{}
\big\langle3\La{d}_k^{\ell},\varphi_k^{\ell}/3\big\rangle-\big\langle\La \varphi_k^{\ell},{d}_k^{\ell}\big\rangle=0.
\end{align}
To capture the dissipation of $\varphi $, we need to consider the time derivative of the mixed terms involved in $\big\langle{d}_k^{\ell},\Lambda \varphi_{k}^{\ell}\big\rangle$
\begin{align}\label{ping4}
&-\frac{d}{dt}\big\langle{d}_k^{\ell},\Lambda \varphi_{k}^{\ell}\big\rangle+\norm{\Lambda \varphi_{k}^{\ell}}^2_{L^2}-3\norm{\Lambda {d}_k^{\ell}}^2_{L^2}\notag\\
&\quad=-2\big\langle\Delta {d}_k^{\ell}, \Lambda \varphi_{k}^{\ell}\big\rangle-\big\langle(f_{2})^{\ell}_k ,\Lambda \varphi_{k}^{\ell}\big\rangle-\big\langle\Lambda (f_{1})^{\ell}_k ,{d}_k^{\ell}\big\rangle.
\end{align}
To eliminate the highest order terms on the right-hand sides of \eqref{ping4},
we next estimate $\norm{\Lambda \varphi_{k}^{\ell}}^2_{L^2}$.
 From \eqref{ping3}, we have
\begin{align}\label{aa}
\partial_t\Lambda \varphi_{k}^{\ell}+3\Lambda^2 {d}_{k}^{\ell}=\Lambda(f_1)_{k}^{\ell}.
\end{align}
Testing (\ref{aa}) by $2\Lambda \varphi_{k}^{\ell}/3$ yields
\begin{align}\label{ping6}
\frac13\frac{d}{dt}\norm{\Lambda \varphi_{k}^{\ell}}^2_{L^2}=
\big\langle2\Delta {d}_k^{\ell}, \Lambda \varphi_{k}^{\ell}\big\rangle+\big\langle(f_{1})^{\ell}_k ,2\Lambda^2 \varphi_{k}^{\ell}/3\big\rangle.
\end{align}
Denote
$$\mathcal{L}^2_k\stackrel{\mathrm{def}}{=}3\norm{\varphi_{k}^{\ell}}^2_{L^2}
+9\norm{{d}_k^{\ell}}^2_{L^2}-\big\langle{d}_k^{\ell},\Lambda \varphi_{k}^{\ell}\big\rangle
+\frac23\norm{\Lambda \varphi_{k}^{\ell}}^2_{L^2}.$$
Summing up $\eqref{ping3+1}\times9$, \eqref{ping4}, and $\eqref{ping6}$,  we obtain
\begin{align}\label{ping8}
&\frac12\frac{d}{dt}\mathcal{L}^2_k+\frac{33}{2}\norm{\Lambda {d}_k^{\ell}}^2_{L^2}+\frac12\norm{\Lambda \varphi_{k}^{\ell}}^2_{L^2}\notag\\
&\quad=3\big\langle(f_{1})^{\ell}_k ,a^{\ell}_{k}\big\rangle+9\big\langle(f_{2})^{\ell}_k ,{d}_k^{\ell} \big\rangle-\big\langle(f_{2})^{\ell}_k ,\Lambda \varphi_{k}^{\ell}\big\rangle-\big\langle\Lambda (f_{1})^{\ell}_k ,{d}_k^{\ell}\big\rangle+\frac23\big\langle(f_{1})^{\ell}_k ,\Lambda^2 \varphi_{k}^{\ell}\big\rangle.
\end{align}
It's straightforward to  deduce from the low-frequency cut-off and Young's inequality that
\begin{align*}
\mathcal{L}^2_k\thickapprox\norm{(\varphi_{k}^{\ell},\Lambda \varphi^{\ell}_k, {d}^{\ell}_k )}^2_{L^2}\thickapprox\norm{(\varphi_{k}^{\ell}, {d}^{\ell}_k)}^2_{L^2},
\end{align*}
which leads to
\begin{align}\label{ping9}
\frac12\frac{d}{dt}\mathcal{L}^2_k+2^{2k}\mathcal{L}^2_k\lesssim\norm{((f_{1})^{\ell}_k,(f_{2})^{\ell}_k )}_{L^2}\mathcal{L}_k.
\end{align}
 Dividing by $\mathcal{L}_k$ formally on both hand sides of \eqref{ping9}, and then integrating from $0$ to $t$, we finally get desired estimate \eqref{ping3-1} by summing up over $k\leq k_0$.
This proves the lemma.
\end{proof}

\vskip .1in
From Lemma \ref{dipinlemma} and the definitions of $\varphi$, $d$, $f_1$ and $f_2$,  the low frequency part of $(\varphi,\q \u)$ can be bounded by
\begin{align}\label{ping11}
&\norm{(\varphi^{\ell},\q \u^{\ell})}_{\widetilde{L}^{\infty}_t(\dot{B}^{0}_{2,1})}
+\norm{(\varphi^{\ell},\q \u^{\ell})}_{L^1_t(\dot{B}^{2}_{2,1})}\notag\\
&\quad\lesssim\norm{(\varphi^{\ell}_0,\q \u^{\ell}_0)}_{\dot{B}^{0}_{2,1}}
+\norm{(\u\cdot\nabla\varphi)^{\ell}}_{L^1_t(\dot{B}^{0}_{2,1})}+\norm{(\varphi\div\u)^{\ell}}_{L^1_t(\dot{B}^{0}_{2,1})}\notag\\
&\quad\quad
+\norm{(\u\cdot\nabla \u)^{\ell}}_{L^1_t(\dot{B}^{0}_{2,1})}
+\norm{(\mathbf{F}(a,\u,{\varphi})^{\ell}}_{L^1_t(\dot{B}^{0}_{2,1})}.
\end{align}
In the following, we estimate successively each of  terms on the right hand side of \eqref{ping11}.
To simplify the writing, we introduce the following notation:
\begin{align*}
&\mathcal{E}_\infty(t)\stackrel{\mathrm{def}}{=}\norm{(\varphi,\q\u)^\ell}_{\dot{B}_{2,1}^{0}}
+\norm{\varphi^h}_{\dot{B}_{p,1}^{\frac {2}{p}}}
+\norm{(\p\u,\q\u^h)}_{\dot{B}_{p,1}^{\frac {2}{p}-1}},
\nonumber\\
&\mathcal{E}_1(t)\stackrel{\mathrm{def}}{=}\norm{(\varphi,\q\u)^\ell}_{\dot{B}_{2,1}^{2}}
+\norm{\varphi^h}_{\dot{B}_{p,1}^{\frac {2}{p}}}
+\norm{(\p\u,\q\u^h)}_{\dot{B}_{p,1}^{\frac {2}{p}+1}}.
\end{align*}
First of all, in view of Lemma \ref{daishu}, there holds
\begin{align}\label{ping13}
&\norm{(\u\cdot\nabla\varphi)^{\ell}}_{\dot{B}^{0}_{2,1}}+\norm{(\varphi\div\u)^{\ell}}_{\dot{B}^{0}_{2,1}}\notag\\
&\quad\lesssim \norm{\u}_{\dot{B}^{\frac {2}{p}}_{p,1}}(\norm{\varphi^{\ell}}_{\dot{B}^{1}_{2,1}}+\norm{\varphi^h}_{\dot{B}^{\frac {2}{p}}_{p,1}}
)+\norm{\varphi}_{\dot{B}^{\frac {2}{p}}_{p,1}}
(\norm{\q\u^{\ell}}_{\dot{B}^{1}_{2,1}}+\norm{\q\u^h}_{\dot{B}^{\frac {2}{p}}_{p,1}})\notag\\
&\quad\lesssim \norm{\p\u}^2_{\dot{B}^{\frac {2}{p}}_{p,1}}+\norm{\q\u^{\ell}}^2_{\dot{B}^{1}_{2,1}}+\norm{\q\u^h}^2_{\dot{B}^{\frac {2}{p}}_{p,1}}
+\norm{\varphi^{\ell}}^2_{\dot{B}^{1}_{2,1}}+\norm{\varphi^h}^2_{\dot{B}^{\frac {2}{p}}_{p,1}}
\notag\\
&\quad\lesssim\norm{\p\u}_{\dot{B}^{\frac {2}{p}-1}_{p,1}}\norm{\p\u}_{\dot{B}^{\frac {2}{p}+1}_{p,1}}+\norm{\varphi^h}^2_{\dot{B}^{\frac {2}{p}}_{p,1}}\nn\\
&\qquad+ \Big(\norm{(\varphi^{\ell},\q\u^{\ell})}_{\dot{B}^{0}_{2,1}}+\norm{\q\u^h}_{\dot{B}^{\frac {2}{p}-1}_{p,1}}\Big)
\Big(\norm{(\varphi^{\ell},\q\u^{\ell})}_{\dot{B}^{2}_{2,1}}+\norm{\q\u^h}_{\dot{B}^{\frac {2}{p}+1}_{p,1}}\Big)\nn\\
&\quad\lesssim\e(t)\ee(t).
\end{align}
Next, to bound $\norm{(\u\cdot\nabla \u)^{\ell}}_{\dot{B}^{0}_{2,1}}$, we obtain from the decomposition $\u=\p\u+\q\u$  and Lemma \ref{daishu} that
\begin{align}\label{ping12}
\norm{(\u\cdot\nabla \u)^{\ell}}_{\dot{B}^{0}_{2,1}}
\lesssim& \norm{(\p\u\cdot\nabla \u)^{\ell}}_{\dot{B}^{0}_{2,1}}+\norm{(\q\u\cdot\nabla \u)^{\ell}}_{\dot{B}^{0}_{2,1}}\nn\\
\lesssim&\norm{\p\u}_{\dot{B}^{\frac {2}{p}-1}_{p,1}}\norm{\nabla \u}_{\dot{B}^{\frac {2}{p}}_{p,1}}+(\norm{\q\u^{\ell}}_{\dot{B}^{0}_{2,1}}+\norm{\q\u^h}_{\dot{B}^{\frac {2}{p}-1}_{p,1}})
\norm{\nabla \u}_{\dot{B}^{\frac {2}{p}}_{p,1}}.
\end{align}
Due to
\begin{align*}
\norm{\nabla \u}_{\dot{B}^{\frac {2}{p}}_{p,1}}\lesssim\norm{\p\u}_{\dot{B}^{\frac {2}{p}+1}_{p,1}}
+\norm{\q\u^{\ell}}_{\dot{B}^{2}_{2,1}}+\norm{\q\u^h}_{\dot{B}^{\frac {2}{p}+1}_{p,1}}\lesssim\ee(t),
\end{align*}
 we infer from \eqref{ping12} that
\begin{align}\label{ping12+1}
\norm{(\u\cdot\nabla \u)^{\ell}}_{\dot{B}^{0}_{2,1}}
\lesssim\e(t)\ee(t).
\end{align}
We now turn to bound the terms involving composition functions in $ \mathbf{F}(a,\u,{\varphi})$.
Keeping in mind that  $$I(a)=a-aI(a),$$  we first use Lemmas \ref{daishu} and \ref{fuhe} to get
\begin{align}\label{zijie1}
\norm{(I(a))^{\ell}}_{\dot{B}^{0}_{2,1}}
\lesssim&\norm{a^{\ell}}_{\dot{B}^{0}_{2,1}}+\norm{(aI(a))^{\ell}}_{\dot{B}^{0}_{2,1}}\nn\\
\lesssim&\norm{a^{\ell}}_{\dot{B}^{0}_{2,1}}+ \norm{I(a)}_{\dot{B}^{\frac {2}{p}}_{p,1}}(\norm{a^{\ell}}_{\dot{B}^{0}_{2,1}}
+\norm{a^h}_{\dot{B}^{\frac {2}{p}-1}_{p,1}})\notag\\
\lesssim&\norm{a^{\ell}}_{\dot{B}^{0}_{2,1}}+ \norm{a}_{\dot{B}^{\frac {2}{p}}_{p,1}}(\norm{a^{\ell}}_{\dot{B}^{0}_{2,1}}
+\norm{a^h}_{\dot{B}^{\frac {2}{p}}_{p,1}})\notag\\
\lesssim&\norm{a^{\ell}}_{\dot{B}^{0}_{2,1}}+ (\norm{a^{\ell}}_{\dot{B}^{0}_{2,1}}
+\norm{a^h}_{\dot{B}^{\frac {2}{p}}_{p,1}})^2\nn\\
\lesssim&(1+\e(t))\e(t).
\end{align}
Similarly, we can infer from Lemma \ref{law} and Lemma \ref{fuhe} that
\begin{align}\label{zijie2}
\|I(a)\|_{\dot{B}_{p,1}^{\frac {2}{p}-1}}
\lesssim&\|a\|_{\dot{B}_{p,1}^{\frac {2}{p}-1}}+\|aI(a)\|_{\dot{B}_{p,1}^{\frac {2}{p}-1}}\nn\\
\lesssim&(\norm{a^{\ell}}_{\dot{B}^{0}_{2,1}}
+\norm{a^h}_{\dot{B}^{\frac {2}{p}}_{p,1}})+\|a\|_{\dot{B}_{p,1}^{\frac {2}{p}-1}}\|I(a)\|_{\dot{B}_{p,1}^{\frac {2}{p}}}\nn\\
\lesssim&(\norm{a^{\ell}}_{\dot{B}^{0}_{2,1}}
+\norm{a^h}_{\dot{B}^{\frac {2}{p}}_{p,1}})+\|a\|_{\dot{B}_{p,1}^{\frac {2}{p}-1}}\|a\|_{\dot{B}_{p,1}^{\frac {2}{p}}}\nn\\
\lesssim&(1+\norm{a^{\ell}}_{\dot{B}^{0}_{2,1}}
+\norm{a^h}_{\dot{B}^{\frac {2}{p}}_{p,1}})(\norm{a^{\ell}}_{\dot{B}^{0}_{2,1}}
+\norm{a^h}_{\dot{B}^{\frac {2}{p}}_{p,1}})\nn\\
\lesssim&(1+\e(t))\e(t).
\end{align}
Now, for the first term $\norm{(I(a)\nabla {\varphi})^\ell}_{\dot{B}^{0}_{2,1}}$ in $ \mathbf{F}(a,\u,{\varphi})$, in view of the fact that ${\varphi}={\varphi}^\ell+{\varphi}^h$, we can write
\begin{align}\label{zijie3}
\norm{(I(a)\nabla {\varphi})^\ell}_{\dot{B}^{0}_{2,1}}\lesssim\norm{(I(a)\nabla {\varphi}^\ell)^{\ell}}_{\dot{B}^{0}_{2,1}}+\norm{(I(a)\nabla {\varphi}^h)^{\ell}}_{\dot{B}^{0}_{2,1}}.
\end{align}
Thanks to Lemma \ref{daishu} again, we have
\begin{align*}
\norm{(I(a)\nabla {\varphi}^\ell)^{\ell}}_{\dot{B}^{0}_{2,1}}
\lesssim& \norm{\nabla {\varphi}^\ell}_{\dot{B}^{\frac {2}{p}}_{p,1}}(\norm{(I(a))^{\ell}}_{\dot{B}^{0}_{2,1}}
+\norm{(I(a))^h}_{\dot{B}^{\frac {2}{p}-1}_{p,1}})\notag\\
\lesssim& \norm{\varphi^\ell}_{\dot{B}^{2}_{2,1}}(\norm{(I(a))^{\ell}}_{\dot{B}^{0}_{2,1}}
+\norm{(I(a))^h}_{\dot{B}^{\frac {2}{p}-1}_{p,1}})
\end{align*}
which combines \eqref{zijie1} and \eqref{zijie2} leads to
\begin{align}\label{zijie6}
\norm{(I(a)\nabla {\varphi}^\ell)^{\ell}}_{\dot{B}^{0}_{2,1}}
\lesssim& \norm{ {\varphi}^\ell}_{\dot{B}^{2}_{2,1}}(1+\e(t))\e(t).
\end{align}
For the term
$\norm{(I(a)\nabla {\varphi}^h)^{\ell}}_{\dot{B}^{0}_{2,1}}$ in \eqref{zijie3}, we use Bony's decomposition to write
\begin{align}\label{zijie7}
\dot S_{j_0+1} (I(a)\nabla {\varphi}^h)=&\dot{T}_{I(a)}\dot S_{j_0+1}  \nabla {\varphi}^h+[\dot S_{j_0+1} ,\dot{T}_{I(a)}]\nabla {\varphi}^h\nn\\
&+\dot S_{j_0+1} \bigl(\dot{T}_{\nabla {\varphi}^h} I(a)+ \dot{R}(I(a),\nabla {\varphi}^h)\bigr)
.
\end{align}
Applying Lemma \ref{fangji}, there holds
\begin{align}\label{zijie8}
\|\dot{T}_{I(a)}\dot S_{j_0+1}  \nabla {\varphi}^h\|_{\dot{B}_{2,1}^{0}}\lesssim&\|I(a)\|_{\dot{B}_{\infty,\infty}^{-1}}\|\dot S_{j_0+1}  \nabla {\varphi}^h\|_{\dot{B}_{2,1}^{1}}
\lesssim\|I(a)\|_{\dot{B}_{p,1}^{\frac {2}{p}-1}}\|\varphi\|^\ell_{\dot{B}_{2,1}^{2}},
\end{align}
from which and \eqref{zijie2}, we can further get
\begin{align}\label{zijie9}
\|\dot{T}_{I(a)}\dot S_{j_0+1}  \nabla {\varphi}^h\|_{\dot{B}_{2,1}^{0}}
\lesssim(1+\e(t))\e(t)\|\varphi\|^\ell_{\dot{B}_{2,1}^{2}}.
\end{align}
The last two terms in \eqref{zijie7} can be estimated the same as   \eqref{D116} and \eqref{D11}   so that
\begin{align}\label{zijie10}
&\|\dot S_{j_0+1} \bigl(\dot{T}_{\nabla {\varphi}^h} I(a)+ \dot{R}(I(a),\nabla {\varphi}^h)\bigr)\|_{\dot{B}_{2,1}^{0}}
+
\|[\dot S_{j_0+1} ,\dot{T}_{I(a)}]\nabla {\varphi}^h\|_{\dot{B}_{2,1}^{0}}\nn\\
&\quad\lesssim\|\nabla {\varphi}^h\|_{\dot{B}_{p,1}^{\frac {2}{p}-1}}\|I(a)\|_{\dot{B}_{p,1}^{\frac {2}{p}}} \nn\\
&\quad\lesssim\| {\varphi}^h\|_{\dot{B}_{p,1}^{\frac {2}{p}}}\|a\|_{\dot{B}_{p,1}^{\frac {2}{p}}} \nn\\
&\quad\lesssim(\norm{a^{\ell}}_{\dot{B}^{0}_{2,1}}
+\norm{a^h}_{\dot{B}^{\frac {2}{p}}_{p,1}})\| {\varphi}^h\|_{\dot{B}_{p,1}^{\frac {2}{p}}}\nn\\
&\quad\lesssim\e(t)\| {\varphi}^h\|_{\dot{B}_{p,1}^{\frac {2}{p}}}
\end{align}
this together with \eqref{zijie9} give rise to
\begin{align}\label{zijie11}
\norm{(I(a)\nabla {\varphi}^h)^{\ell}}_{\dot{B}^{0}_{2,1}}
\lesssim(1+\e(t))\e(t)(\|\varphi\|^\ell_{\dot{B}_{2,1}^{2}}+\| {\varphi}^h\|_{\dot{B}_{p,1}^{\frac {2}{p}}}).
\end{align}

Plugging \eqref{zijie6} and \eqref{zijie11}  into \eqref{zijie3} yields
\begin{align}\label{zijie12}
\norm{(I(a)\nabla {\varphi})}_{\dot{B}^{0}_{2,1}}\lesssim(1+\e(t))\e(t)(\|\varphi\|^\ell_{\dot{B}_{2,1}^{2}}+\| {\varphi}^h\|_{\dot{B}_{p,1}^{\frac {2}{p}}}).
\end{align}
For the last term in $ \mathbf{F}(a,\u,{\varphi})$, as we set $\p\u$  in the $L^p$ type spaces, we cannot use Lemma \ref{daishu} directly to bound this term. For an integer $j_0\ge 0$,
we use Bony's decomposition to rewrite this term into
\begin{align}\label{ping17}
\dot S_{j_0+1}\q (I(a)\Delta \u)=&\dot S_{j_0+1}\q \bigl(\dot{T}_{\Delta \u} I(a)+ \dot{R}(\Delta \u,I(a))\bigr)\nonumber\\
&+\dot{T}_{I(a)}\Delta\dot S_{j_0+1}\q  \u
+[\dot S_{j_0+1}\q ,\dot{T}_{I(a)}]\Delta \u.
\end{align}
The first term can be bounded by Lemmas    \ref{fangji} and \ref{fuhe},
\begin{align}\label{ping18}
&\norm{(\dot S_{j_0+1}\q \bigl(\dot{T}_{\Delta \u} I(a)+ \dot{R}(\Delta \u,I(a))\bigr)  )^\ell }_{\dot B^{0}_{2,1}}\nonumber\\
&\quad\lesssim \norm{I(a)}_{\dot B^{\frac {2}{p}}_{p,1}}\norm{\Delta \u}_{\dot B^{\frac {2}{p}-1}_{p,1}}
\lesssim \norm{a}_{\dot B^{\frac {2}{p}}_{p,1}}\norm{ \u}_{\dot B^{\frac {2}{p}+1}_{p,1}}\nonumber\\
&\quad\lesssim (\norm{a^\ell}_{\dot B^{0}_{2,1}}+\norm{a^h}_{\dot B^{\frac {2}{p}}_{p,1}})(\norm{\q \u^\ell} _{\dot B^{2}_{2,1}}+\norm{(\p \u,\q \u^h)}_{\dot B^{\frac {2}{p}+1}_{p,1}} ).
\end{align}
Similarly, we have
\begin{align}\label{ping19}
\norm{(\dot{T}_{I(a)}\Delta\dot S_{j_0+1}\q  \u  )^\ell  }_{\dot B^{0}_{2,1}}
\lesssim& \norm{I(a)}_{L^\infty}\norm{\Delta\dot S_{j_0+1}\q  \u}_{\dot B^{0}_{2,1}}
\nonumber\\
\lesssim& \norm{a}_{\dot B^{\frac {2}{p}}_{p,1}}\norm{ \q \u^\ell}_{\dot B^{2}_{2,1}}\nonumber\\
\lesssim& (\norm{a^\ell}_{\dot B^{0}_{2,1}}+\norm{a^h}_{\dot B^{\frac {2}{p}}_{p,1}})\norm{\q \u^\ell} _{\dot B^{2}_{2,1}}.
\end{align}
The commutator term is estimated by using  Lemma  6.1 in \cite{helingbing} that
\begin{align}\label{ping20}
\norm{[\dot S_{j_0+1}\q ,T_{I(a)}]\Delta \u}_{\dot B^{0}_{2,1}}
\lesssim& \norm{\nabla I(a)}_{\dot B^{\frac{2}{p^*}-1}_{p^*,1}}\norm{\nabla^2\u}_{ \dot B^{\frac {2}{p}-1}_{p,1}},\quad\quad\left(\frac{1}{p^*}+\frac1p=\frac12\right), \nonumber\\
\quad\lesssim& \norm{\nabla I(a)}_{\dot B^{\frac {2}{p}-1}_{p,1}}\norm{ \u}_{\dot B^{\frac {2}{p}+1}_{p,1}} \nonumber\\
\lesssim& \norm{a}_{\dot B^{\frac {2}{p}}_{p,1}}\norm{ \u}_{\dot B^{\frac {2}{p}+1}_{p,1}}\nonumber\\
\quad\lesssim& (\norm{a^\ell}_{\dot B^{0}_{2,1}}+\norm{a^h}_{\dot B^{\frac {2}{p}}_{p,1}})(\norm{\q \u^\ell} _{\dot B^{2}_{2,1}}+\norm{(\p \u,\q \u^h)}_{\dot B^{\frac {2}{p}+1}_{p,1}} ),
\end{align}
where  we have used the embedding  $\dot B^{\frac {2}{p}-1}_{p,1}(\R^2)\hookrightarrow \dot B^{\frac{2}{p^*}-1}_{p^*,1}(\R^2), \ 2\le p<4.$

The term $I(a)\nabla\div \u$ can be estimated in a similar manner. As a result, we have
\begin{align}\label{ping22}
\norm{(I(a)(\Delta \u+\nabla\div \u))^{\ell}}_{\dot{B}^{0}_{2,1}}
\lesssim\e(t)\ee(t).
\end{align}
Plugging \eqref{ping13}, \eqref{ping12+1}, \eqref{zijie12}, and \eqref{ping22} into \eqref{ping11} gives
\begin{align}\label{ping24}
&\norm{(\varphi^{\ell},\q\u^{\ell})}_{\widetilde{L}^{\infty}_t(\dot{B}^{0}_{2,1})}
+\norm{(\varphi^{\ell},\q\u^{\ell})}_{L^1_t(\dot{B}^{2}_{2,1})}\notag\\
&\quad\lesssim\norm{(\varphi^{\ell}_0,\q\u^{\ell}_0)}_{\dot{B}^{0}_{2,1}}
+\int^t_0(1+\e(\tau))\e(\tau)\ee(\tau)\,d\tau.
\end{align}
Finally, we shall derive the bound of $\norm{a^{\ell}}_{\widetilde{L}^{\infty}_t(\dot{B}^{0}_{2,1})}.$
Due to the appearance of the  term $\div \u$ in the first equation of \eqref{333mm3}, we cannot obtain the bound
$\norm{a^{\ell}}_{\widetilde{L}^{\infty}_t(\dot{B}^{0}_{2,1})}$ directly.
To break the  barrier, we define
\begin{align}\label{ggug1}
\delta\stackrel{\mathrm{def}}{=}\varphi-3a
\end{align}
which satisfies the following  transport equation
\begin{align}\label{ggug12}
\partial_t\delta+\u\cdot\nabla \delta+\delta\,\div \u+\varphi\,\div \u=0.
\end{align}
Now applying $\ddj$ to the above equation and using a commutator's argument give rise to
\begin{align*}
\partial_t\ddj{\delta}+\u\cdot\nabla \ddj \delta+[\ddj,\u\cdot\nabla] \delta+\ddj(\delta\div \u)+\ddj(\varphi\div \u)=0.
\end{align*}
Taking  $L^2$ inner product of the resulting equation with $\dot{\Delta}_{j}\delta$,
applying the H\"older inequality and
integrating the resultant inequality over $[0, t]$, then   summing up  $j\le j_0$, we arrive at
\begin{align}\label{you2}
\|\delta^\ell\|_{ \widetilde{L}_t^\infty(\dot B^{0}_{2,1})}
\lesssim&\|\delta^\ell_0\|_{\dot B^{0}_{2,1}}
+ \|(\delta\div\u)^\ell\|_{L^1_t(\dot B^{0}_{2,1})}+ \|(\varphi\div\u)^\ell\|_{L^1_t(\dot B^{0}_{2,1})}\nn\\
&+\int_0^t\|\div \u\|_{L^\infty}\|\delta^\ell\|_{\dot B^{0}_{2,1}}\,d\tau+\int_0^t\sum_{j\le j_0}\|[\ddj,\u\cdot\nabla]\delta\|_{L^2}\,d\tau.
\end{align}
By Lemma \ref{daishu}, there holds
\begin{align}\label{you3}
\|(\delta\div\u)^\ell\|_{\dot B^{0}_{2,1}}
\lesssim&(\|\delta^\ell\|_{\dot B^{0}_{2,1}}+\|\delta^h\|_{\dot B^{\frac  2p-1}_{p,1}})\|\div\u\|_{\dot B^{\frac  2p}_{p,1}}\nn\\
\lesssim&(\|(a^\ell,\varphi^\ell)\|_{\dot B^{0}_{2,1}}+\|(a^h,\varphi^h)\|_{\dot B^{\frac  2p}_{p,1}})(\|\q\u^\ell\|_{\dot B^{2}_{2,1}}+\|\q\u^h\|_{\dot B^{\frac  2p+1}_{p,1}}).
\end{align}
Similarly,
\begin{align}\label{you3232}
\|(\varphi\div\u)^\ell\|_{\dot B^{0}_{2,1}}
\lesssim&(\|\varphi^\ell\|_{\dot B^{0}_{2,1}}+\|\varphi^h\|_{\dot B^{\frac  2p}_{p,1}})(\|\q\u^\ell\|_{\dot B^{2}_{2,1}}+\|\q\u^h\|_{\dot B^{\frac  2p+1}_{p,1}}).
\end{align}

With the aid of the embedding relation ${\dot B^{\frac  2p}_{p,1}}(\R^2)\hookrightarrow L^\infty(\R^2)$ and Lemma \ref{jiaohuanzi}, we can bound the forth term on the right hand side of \eqref{you2} as
\begin{align}\label{you5}
\|\div \u\|_{L^\infty}\|\delta^\ell\|_{\dot B^{0}_{2,1}}
\lesssim&\|\delta^\ell\|_{\dot B^{0}_{2,1}}\|\q\u\|_{\dot B^{\frac  2p+1}_{p,1}}\nn\\
\lesssim&(\|(a^\ell,\varphi^\ell)\|_{\dot B^{0}_{2,1}}+\|(a^h,\varphi^h)\|_{\dot B^{\frac  2p}_{p,1}})(\|\q\u^\ell\|_{\dot B^{2}_{2,1}}+\|\q\u^h\|_{\dot B^{\frac  2p+1}_{p,1}}).
\end{align}
The last term in \eqref{you2} can be bounded by a  similarly derivation of  (4.9)  in  \cite{huangjingchi} that
\begin{align}\label{you6}
\sum_{j\le j_0}\|[\ddj,\u\cdot\nabla]\delta\|_{L^2}
\lesssim&(\big\|\q\u^\ell\big\|_{\dot{B}^{2}_{2,1}}+\big\|(\p\u,\u^h)\big\|_{\dot{B}^{\frac{2}{p}+1}_{p,1}})
(\|(a^\ell,\varphi^\ell)\|_{\dot B^{0}_{2,1}}+\|(a^h,\varphi^h)\|_{\dot B^{\frac  2p}_{p,1}}).
\end{align}
Taking \eqref{you3}--\eqref{you6} into \eqref{you2}, we obtain
\begin{align}\label{youjia2}
\|\delta^\ell\|_{ \widetilde{L}_t^\infty(\dot B^{0}_{2,1})}
\lesssim&\|(a^\ell_0,\varphi^\ell_0)\|_{\dot B^{0}_{2,1}}+
\int^t_0\e(\tau)\ee(\tau)\,d\tau
\end{align}
which combines the definition $a=\frac13(\varphi-\delta)$ leads to
\begin{align}\label{youjia2222}
\|a^\ell\|_{ \widetilde{L}_t^\infty(\dot B^{0}_{2,1})}
\lesssim&\|\delta^\ell\|_{ \widetilde{L}_t^\infty(\dot B^{0}_{2,1})}+\|\varphi^\ell\|_{ \widetilde{L}_t^\infty(\dot B^{0}_{2,1})}\nn\\
\lesssim&
\|(a^\ell_0,\varphi^\ell_0)\|_{\dot B^{0}_{2,1}}+\|\varphi^\ell\|_{ \widetilde{L}_t^\infty(\dot B^{0}_{2,1})}+
\int^t_0\e(\tau)\ee(\tau)\,d\tau.
\end{align}
In the same manner, we can infer from forth equation of \eqref{333mm3} that
\begin{align}\label{youjia22232}
\|b^\ell\|_{ \widetilde{L}_t^\infty(\dot B^{0}_{2,1})}
\lesssim&
\|(b^\ell_0,\varphi^\ell_0)\|_{\dot B^{0}_{2,1}}+\|\varphi^\ell\|_{ \widetilde{L}_t^\infty(\dot B^{0}_{2,1})}+
\int^t_0\e(\tau)\ee(\tau)\,d\tau.
\end{align}
Consequently, combining with \eqref{ping24}, \eqref{youjia2222} and \eqref{youjia22232}, we finally arrive at
\begin{align}\label{ping246}
&\norm{(a^{\ell},b^{\ell},\q\u^{\ell})}_{\widetilde{L}^{\infty}_t(\dot{B}^{0}_{2,1})}
+\norm{(\varphi^{\ell},\q\u^{\ell})}_{L^1_t(\dot{B}^{2}_{2,1})}\notag\\
&\quad\lesssim\norm{(a^{\ell}_0,b^{\ell}_0,\varphi^{\ell}_0,\q\u^{\ell}_0)}_{\dot{B}^{0}_{2,1}}
+\int^t_0(1+\e(\tau))\e(\tau)\ee(\tau)\,d\tau.
\end{align}

\subsection{High-frequency estimates}
In this subsection, we shall introduce the so called  effective velocity to capture the damping effect of $\varphi$ in the high frequency part.
\subsubsection{Estimates for auxiliary unknowns}
 First, we infer from \eqref{mm3} that $(\varphi,\q\u)$ satisfies
\begin{eqnarray}\label{han1}
\left\{\begin{aligned}
&\partial_t{\varphi}+{3}\div \u=-\varphi\,\div \u-\div(\varphi\u),\\
&\partial_t\mathbb{Q}\u - 2\Delta \mathbb{Q}\u +\nabla {\varphi} =-\mathbb{Q}(\u\cdot\nabla \u)+\mathbb{Q}\mathbf{F}(a,\u,{\varphi}).
\end{aligned}\right.
\end{eqnarray}
Now, we define the  effective velocity ${\mathbf{G}}$ as follows
\begin{align}\label{han2}
{\mathbf{G}}\stackrel{\mathrm{def}}{=} \mathbb{Q}\u-\frac12\Delta^{-1}\nabla {\varphi}.
\end{align}
Then ${\mathbf{G}}$ satisfies
\begin{align}\label{han3}
\partial_t{\mathbf{G}}-2\Delta {\mathbf{G}}=\frac32{\mathbf{G}}+\frac32\Delta^{-1}\nabla {\varphi}+\frac12\Delta^{-1}\nabla(\varphi\,\div \u)+\frac12\mathbb{Q}({\varphi} \u)-\mathbb{Q}(\u,\nabla \u)+\mathbb{Q}\mathbf{F}(a,\u,{\varphi}).
\end{align}
Applying the heat estimate \eqref{heat} for the high frequencies of $\mathbf{G}$ only, we get
\begin{align}\label{han5}
&\norm{\mathbf{G}^h}_{ \widetilde{L}_t^\infty(\dot B^{\frac  2p-1}_{p,1})}+ \norm{\mathbf{G}^h}_{L^1_t(\dot B^{\frac  2p+1}_{p,1})}\nn\\
&\quad\lesssim \norm{\mathbf{G}_0^h}_{\dot B^{\frac  2p-1}_{p,1}}
+ \norm{\mathbf{G}^h}_{L^1_t(\dot B^{\frac  2p-1}_{p,1})}+ \norm{\varphi^h}_{L^1_t(\dot B^{\frac  2p-2}_{p,1})}+ \norm{(\varphi\div\u)^h}_{L^1_t(\dot B^{\frac  2p-2}_{p,1})}\nonumber\\
&\qquad+ \norm{\mathbb{Q}({\varphi} \u)^h}_{L^1_t(\dot B^{\frac  2p-1}_{p,1})}+\norm{\mathbb{Q}(\u,\nabla \u)^h}_{L^1_t(\dot B^{\frac  2p-1}_{p,1})}+\norm{\mathbb{Q}\mathbf{F}(a,\u,{\varphi})^h}_{L^1_t(\dot B^{\frac  2p-1}_{p,1})}.
\end{align}
The important point is that, owing to the high frequency cut-off at $|\xi|\sim 2^{j_0},$
$$
\norm{\mathbf{G}^h}_{L^1_t(\dot B^{\frac  2p-1}_{p,1})}\lesssim 2^{-2j_0}\norm{\mathbf{G}^h}_{L^1_t(\dot B^{\frac  2p+1}_{p,1})}
\quad\hbox{and}\quad
\norm{\varphi^h}_{L^1_t(\dot B^{\frac  2p-2}_{p,1})}\lesssim  2^{-2j_0}\norm{\varphi^h}_{L^1_t(\dot B^{\frac  2p}_{p,1})}.
$$
Hence, if $j_0$ is large enough then the term $\norm{\mathbf{G}^h}_{L^1_t(\dot B^{\frac  2p-1}_{p,1})}$ may be absorbed by the right hand side.

In view of \eqref{han2}, we have
 ${\varphi}$ satisfies
\begin{align}\label{han4}
\partial_t{\varphi}+\frac32{\varphi}+\u\cdot\nabla \varphi=-3\div {\mathbf{G}}-2{\varphi}\,\div \u.
\end{align}
Applying $\ddj$ to \eqref{han4} and using a commutator argument give rise to
\begin{align}\label{han4+1}
\partial_t\ddj{\varphi}+\frac32\ddj{\varphi}+\u\cdot\nabla \ddj \varphi=-[\ddj,\u\cdot\nabla] \varphi-3\ddj\div {\mathbf{G}}-2\ddj({\varphi}\,\div \u).
\end{align}
Taking  $L^2$ inner product of \eqref{han4+1} with $\frac1p|\dot{\Delta}_{j}\varphi|^{p-2}\dot{\Delta}_{j}\varphi$,
applying the H\"older inequality and
integrating the resultant inequality over $[0, t]$  lead to
\begin{align}\label{han12}
&\|\ddj \varphi(t)\|_{L^p}+\int_0^t\|\ddj \varphi\|_{L^p}\,d\tau\nonumber\\
&\quad\lesssim\|\ddj \varphi_0\|_{L^p}
+\frac 1p\int_0^t\norm{\div \u}_{L^\infty}\|\ddj \varphi\|_{L^p}\,d\tau
\nonumber\\
&\quad\quad+\int_0^t\|[\ddj,\u\cdot\nabla] \varphi\|_{L^p}\,d\tau+\int_0^t\|\ddj\div \mathbf{G}\|_{L^p}\,d\tau+\int_0^t\|\ddj(\varphi\,\div  \u)\|_{L^p}\,d\tau
\end{align}
from which we can further get
\begin{align}\label{han6}
&\norm{\varphi^h}_{\widetilde{L}^\infty_t(\dot B^{\frac  2p}_{p,1})}+\frac32\norm{\varphi^h}_{ L^1_t(\dot B^{\frac  2p}_{p,1})}\notag \\
&\quad\lesssim \norm{\varphi_0^h}_{\dot B^{\frac  2p}_{p,1}}+3\norm{\mathbf{G}^h}_{L^1_t(\dot B^{\frac  2p+1}_{p,1})}+\int_0^t\norm{ \nabla\u}_{\dot B^{\frac  2p}_{p,1}}\norm{\varphi}_{\dot B^{\frac  2p}_{p,1}}\,d\tau.
\end{align}
Multiplying \eqref{han5} by a suitable large constant and adding to \eqref{han6}, we obtain
\begin{align}\label{han7}
&\norm{{\mathbf{G}}^h}_{\widetilde{L}^{\infty}_t(\dot{B}^{\frac {2}{p}-1}_{p,1})}+\norm{{\varphi}^h}_{\widetilde{L}^{\infty}_t(\dot{B}^{\frac {2}{p}}_{p,1})}
+\norm{{\mathbf{G}}^h}_{{L}^{1}_t(\dot{B}^{\frac {2}{p}+1}_{p,1})}+\norm{{\varphi}^h}_{{L}^{1}_t(\dot{B}^{\frac {2}{p}}_{p,1})}\notag\\
&\quad\lesssim \norm{{\mathbf{G}}^h_{0}}_{\dot{B}^{\frac {2}{p}-1}_{p,1}}+\norm{{\varphi}^h_{0}}_{\dot{B}^{\frac {2}{p}}_{p,1}}
+\int_0^t\norm{\nabla \u}_{\dot B^{\frac  2p}_{p,1}}\norm{\varphi}_{\dot B^{\frac  2p}_{p,1}}\,d\tau\notag\\
&\quad\quad +\int^t_0(
\norm{\u\cdot\nabla \u}_{\dot{B}^{\frac {2}{p}-1}_{p,1}}+\norm{({\varphi} \u)^h}_{\dot{B}^{\frac {2}{p}-1}_{p,1}})\,d\tau
+\int^t_0\norm{(\mathbf{F}(a,\u,{\varphi}))^h}_{\dot{B}^{\frac {2}{p}-1}_{p,1}}\,d\tau.
\end{align}
\subsubsection{Recovering estimates for a, b}
In this subsection, we shall recover the estimates for $a,b$, and  $\u$.
On the one hand,
in view of ${\mathbf{G}}\stackrel{\mathrm{def}}{=} \mathbb{Q}\u-\frac12\Delta^{-1}\nabla {\varphi}$ and the embedding relation in the high frequency, there holds
\begin{align*}
\norm{\q \u^h}_{\widetilde{L}^{\infty}_t(\dot{B}^{\frac {2}{p}-1}_{p,1})}
\lesssim& \norm{\mathbf{G}^h}_{\widetilde{L}^{\infty}_t(\dot{B}^{\frac {2}{p}-1}_{p,1})}
+\norm{ \varphi^h}_{\widetilde{L}^{\infty}_t(\dot{B}^{\frac {2}{p}}_{p,1})},
\end{align*}
\begin{align*}
\norm{\q \u^h}_{{L}^{1}_t(\dot{B}^{\frac {2}{p}+1}_{p,1})}
\lesssim & \norm{\mathbf{G}^h}_{{L}^{1}_t(\dot{B}^{\frac {2}{p}+1}_{p,1})}
+\norm{ \varphi^h}_{{L}^{1}_t(\dot{B}^{\frac {2}{p}}_{p,1})}.
\end{align*}
As a result, we can rewrite \eqref{han7} into
\begin{align}\label{han21}
&\norm{\varphi^h}_{\widetilde{L}^{\infty}_t(\dot{B}^{\frac {2}{p}}_{p,1})}
+\norm{\q\u^h}_{\widetilde{L}^{\infty}_t(\dot{B}^{\frac {2}{p}-1}_{p,1})}+\norm{\varphi^h}_{{L}^{1}_t(\dot{B}^{\frac {2}{p}}_{p,1})}
+\norm{{\q\u}^h}_{{L}^{1}_t(\dot{B}^{\frac {2}{p}+1}_{p,1})}\nn\\
&\quad\lesssim \norm{{\varphi}^h_{0}}_{\dot{B}^{\frac {2}{p}}_{p,1}}+
\norm{{\q\u}^h_{0}}_{\dot{B}^{\frac {2}{p}-1}_{p,1}}
+\int_0^t\norm{\nabla \u}_{\dot B^{\frac  2p}_{p,1}}\norm{\varphi}_{\dot B^{\frac  2p}_{p,1}}\,d\tau\notag\\
&\quad\quad +\int^t_0(
\norm{\u\cdot\nabla \u}_{\dot{B}^{\frac {2}{p}-1}_{p,1}}+\norm{({\varphi} \u)^h}_{\dot{B}^{\frac {2}{p}-1}_{p,1}})\,d\tau
+\int^t_0\norm{(\mathbf{F}(a,\u,{\varphi}))^h}_{\dot{B}^{\frac {2}{p}-1}_{p,1}}\,d\tau.
\end{align}
Finally, we estimates the incompressible part of the velocity field.
Applying the operator $\p$ to the second equation of \eqref{mm3}, we find that $\p\u$ satisfies the heat equation
\begin{align}\label{ping1}
\partial_t {\p \u}-\Delta {\p \u}=-{\p (\u\cdot \nabla \u)}+\p \mathbf{F}(a,\u,{\varphi}) .
\end{align}
By Lemma \ref{heat}, we can get
\begin{align}\label{ping2}
&\norm{{\p \u}}_{\widetilde{L}^{\infty}_t(\dot{B}^{\frac {2}{p}-1}_{p,1})}
+\norm{{\p \u}}_{L^1_t(\dot{B}^{\frac {2}{p}+1}_{p,1})}
\nn\\
&\quad\lesssim\norm{\p \u_0}_{\dot{B}^{\frac {2}{p}-1}_{p,1}}
+\norm{(\u\cdot\nabla \u)}_{L^1_t(\dot{B}^{\frac {2}{p}-1}_{p,1})}+\norm{(\mathbf{F}(a,\u,{\varphi})}_{L^1_t(\dot{B}^{\frac {2}{p}-1}_{p,1})}.
\end{align}
Combining  \eqref{han21} with \eqref{ping2} gives
\begin{align}\label{keyaguji}
&\norm{\varphi^h}_{\widetilde{L}^{\infty}_t(\dot{B}^{\frac {2}{p}}_{p,1})}
+\norm{(\p\u,\q\u^h)}_{\widetilde{L}^{\infty}_t(\dot{B}^{\frac {2}{p}-1}_{p,1})}
\nn\\
&\qquad+\norm{\varphi^h}_{{L}^{1}_t(\dot{B}^{\frac {2}{p}}_{p,1})}
+\norm{(\p\u,\q\u^h)}_{{L}^{1}_t(\dot{B}^{\frac {2}{p}+1}_{p,1})}\nn\\
&\quad\lesssim \norm{{\varphi}^h_{0}}_{\dot{B}^{\frac {2}{p}}_{p,1}}+
\norm{(\p\u_0,{\q\u}^h_{0})}_{\dot{B}^{\frac {2}{p}-1}_{p,1}}
+\int_0^t\norm{\nabla \u}_{\dot B^{\frac  2p}_{p,1}}\norm{\varphi}_{\dot B^{\frac  2p}_{p,1}}\,d\tau\notag\\
&\quad\quad +\int^t_0(
\norm{\u\cdot\nabla \u}_{\dot{B}^{\frac {2}{p}-1}_{p,1}}+\norm{({\varphi} \u)^h}_{\dot{B}^{\frac {2}{p}-1}_{p,1}})\,d\tau
+\int^t_0\norm{\mathbf{F}(a,\u,{\varphi})}_{\dot{B}^{\frac {2}{p}-1}_{p,1}}\,d\tau.
\end{align}
We now bound the terms on the right hand side of \eqref{keyaguji}.
First, it's obvious that
\begin{align}\label{}
\norm{\nabla \u}_{\dot B^{\frac  2p}_{p,1}}\norm{\varphi}_{\dot B^{\frac  2p}_{p,1}}
\lesssim&\big(
\|\q\u^\ell\|_{\dot{B}^{2}_{2,1}}+\|(\p\u,\q\u^h)\|_{\dot{B}^{\frac {2}{p}+1}_{p,1}})(\|\varphi^\ell\|_{\dot{B}^{ 0}_{2,1}}+\|\varphi^h\|_{\dot{B}^{\frac {2}{p}}_{p,1}}\big).
\end{align}
Then, according to Lemma \ref{law}, there holds
\begin{align}\label{}
\norm{\u\cdot\nabla \u}_{\dot{B}^{\frac {2}{p}-1}_{p,1}}
\lesssim&\norm{\u}_{\dot{B}^{\frac {2}{p}}_{p,1}}^2\nn\\
\lesssim&\norm{\p\u}_{\dot{B}^{\frac {2}{p}}_{p,1}}^2+\norm{\q\u^\ell}_{\dot{B}^{1}_{2,1}}^2
+\norm{\q\u^h}_{\dot{B}^{\frac {2}{p}}_{p,1}}^2\nn\\
\lesssim&\norm{\p\u}_{\dot{B}^{\frac {2}{p}-1}_{p,1}}\norm{\p\u}_{\dot{B}^{\frac {2}{p}+1}_{p,1}}
+\norm{\q\u^\ell}_{\dot{B}^{0}_{2,1}}\norm{\q\u^\ell}_{\dot{B}^{2}_{2,1}}
+\norm{\q\u^h}_{\dot{B}^{\frac {2}{p}-1}_{p,1}}\norm{\q\u^h}_{\dot{B}^{\frac {2}{p}+1}_{p,1}}\nn\\
\lesssim&\e(t)\ee(t).
\end{align}
By the embedding relation in high frequency and the Young inequality, we get
\begin{align}\label{}
\norm{(\varphi \u)^h}_{\dot{B}^{\frac {2}{p}-1}_{p,1}}
\lesssim\norm{\varphi \u}_{\dot{B}^{\frac {2}{p}}_{p,1}}
\lesssim\norm{\varphi }_{\dot{B}^{\frac {2}{p}}_{p,1}}^2+\norm{ \u}_{\dot{B}^{\frac {2}{p}}_{p,1}}^2
\lesssim\e(t)\ee(t).
\end{align}
At last, we deal with each term in $\mathbf{F}(a,\u,{\varphi})$.
In view of Lemmas \ref{law}, \ref{fuhe}, there holds
\begin{align}\label{}
\norm{I(a)\nabla{\varphi}}_{\dot{B}^{\frac {2}{p}-1}_{p,1}}
\lesssim&\norm{I(a)}_{\dot{B}^{\frac {2}{p}-1}_{p,1}}\norm{\nabla{\varphi}^\ell}_{\dot{B}^{\frac {2}{p}}_{p,1}}+\norm{I(a)}_{\dot{B}^{\frac {2}{p}}_{p,1}}\norm{\nabla{\varphi}^h}_{\dot{B}^{\frac {2}{p}-1}_{p,1}}\nn\\
\lesssim&\norm{I(a)}_{\dot{B}^{\frac {2}{p}-1}_{p,1}}\norm{{\varphi}^\ell}_{\dot{B}^{2}_{2,1}}+\norm{a}_{\dot{B}^{\frac {2}{p}}_{p,1}}\norm{{\varphi^h}}_{\dot{B}^{\frac {2}{p}}_{p,1}}\nn\\
\lesssim&\norm{I(a)}_{\dot{B}^{\frac {2}{p}-1}_{p,1}}\norm{{\varphi}^\ell}_{\dot{B}^{2}_{2,1}}+(\norm{a^\ell}_{\dot{B}^{ 0}_{2,1}}+\norm{a^h}_{\dot{B}^{\frac {2}{p}}_{p,1}})\norm{{\varphi^h}}_{\dot{B}^{\frac {2}{p}}_{p,1}}
\end{align}
from which and \eqref{zijie2}, we can further get
\begin{align}\label{}
\norm{I(a)\nabla{\varphi}}_{\dot{B}^{\frac {2}{p}-1}_{p,1}}
\lesssim
(1+\e(t))\e(t)(\|\varphi\|^\ell_{\dot{B}_{2,1}^{2}}+\| {\varphi}^h\|_{\dot{B}_{p,1}^{\frac {2}{p}}}).
\end{align}
The last term in $\mathbf{F}(a,\u,{\varphi})$ can be bounded  in the same manner.
Hence, we have
\begin{align}\label{}
\norm{(I(a)(\Delta \u+\nabla\div \u))^h}_{\dot{B}^{\frac {2}{p}-1}_{p,1}}
\lesssim&\norm{I(a)}_{\dot{B}^{\frac {2}{p}}_{p,1}}\norm{\u}_{\dot{B}^{\frac {2}{p}+1}_{p,1}}\nn\\
\lesssim&\norm{a}_{\dot{B}^{\frac {2}{p}}_{p,1}}(\norm{\q\u^\ell}_{\dot{B}^{2}_{2,1}}+\norm{(\p\u^h,\q\u^h)}_{\dot{B}^{\frac {2}{p}+1}_{p,1}})\nn\\
\lesssim&\e(t)\ee(t).
\end{align}
Collecting the estimates above, we  get from \eqref{keyaguji} that
\begin{align}\label{er33}
&\norm{\varphi^h}_{\widetilde{L}^{\infty}_t(\dot{B}^{\frac {2}{p}}_{p,1})}
+\norm{(\p\u,\q\u^h)}_{\widetilde{L}^{\infty}_t(\dot{B}^{\frac {2}{p}-1}_{p,1})}
\nn\\
&\qquad+\norm{\varphi^h}_{{L}^{1}_t(\dot{B}^{\frac {2}{p}}_{p,1})}
+\norm{(\p\u,\q\u^h)}_{{L}^{1}_t(\dot{B}^{\frac {2}{p}+1}_{p,1})}\nn\\
&\quad\lesssim \norm{{\varphi}^h_{0}}_{\dot{B}^{\frac {2}{p}}_{p,1}}+
\norm{(\p\u_0,{\q\u}^h_{0})}_{\dot{B}^{\frac {2}{p}-1}_{p,1}}
+\int^t_0(1+\e(\tau))\e(\tau)\ee(\tau)\,d\tau.
\end{align}
For the term $\norm{a^h}_{\widetilde{L}^\infty_t(\dot B^{\frac  2p}_{p,1})}$, we get by a similar derivation of \eqref{han6} that
\begin{align}\label{han6aa}
\norm{a^h}_{\widetilde{L}^\infty_t(\dot B^{\frac  2p}_{p,1})}
\lesssim& \norm{a_0^h}_{\dot B^{\frac  2p}_{p,1}}+\norm{\u^h}_{L^1_t(\dot B^{\frac  2p+1}_{p,1})}+\int_0^t\norm{ \nabla\u}_{\dot B^{\frac  2p}_{p,1}}\norm{a}_{\dot B^{\frac  2p}_{p,1}}\,d\tau\nn\\
\lesssim& \norm{a_0^h}_{\dot B^{\frac  2p}_{p,1}}+\norm{(\p\u,\q\u^h)}_{L^1_t(\dot B^{\frac  2p+1}_{p,1})}\nn\\
&+\int_0^t(
\norm{\q\u^\ell}_{\dot{B}^{2}_{2,1}}+\norm{(\p\u,\q\u^h)}_{\dot{B}^{\frac {2}{p}+1}_{p,1}})(\norm{a^\ell}_{\dot{B}^{ 0}_{2,1}}+\norm{a^h}_{\dot{B}^{\frac {2}{p}}_{p,1}})\,d\tau\nn\\
\lesssim& \norm{a_0^h}_{\dot B^{\frac  2p}_{p,1}}+\norm{(\p\u,\q\u^h)}_{L^1_t(\dot B^{\frac  2p+1}_{p,1})}+\int^t_0\e(\tau)\ee(\tau)\,d\tau.
\end{align}
In the same manner, we can infer that
\begin{align}\label{han6bb}
\norm{b^h}_{\widetilde{L}^\infty_t(\dot B^{\frac  2p}_{p,1})}
\lesssim& \norm{b_0^h}_{\dot B^{\frac  2p}_{p,1}}+\norm{(\p\u,\q\u^h)}_{L^1_t(\dot B^{\frac  2p+1}_{p,1})}+\int^t_0\e(\tau)\ee(\tau)\,d\tau.
\end{align}
Multiplying by a suitable large constant on both sides of \eqref{er33} and
then pulsing  \eqref{han6aa} and \eqref{han6bb}, we can  finally get
\begin{align}\label{er3356}
&\norm{(a^h,b^h)}_{\widetilde{L}^{\infty}_t(\dot{B}^{\frac {2}{p}}_{p,1})}
+\norm{(\p\u,\q\u^h)}_{\widetilde{L}^{\infty}_t(\dot{B}^{\frac {2}{p}-1}_{p,1})}
\nn\\
&\qquad+\norm{\varphi^h}_{{L}^{1}_t(\dot{B}^{\frac {2}{p}}_{p,1})}
+\norm{(\p\u,\q\u^h)}_{{L}^{1}_t(\dot{B}^{\frac {2}{p}+1}_{p,1})}\nn\\
&\quad\lesssim \norm{({a}^h_{0},{b}^h_{0},{\varphi}^h_{0})}_{\dot{B}^{\frac {2}{p}}_{p,1}}+
\norm{(\p\u_0,{\q\u}^h_{0})}_{\dot{B}^{\frac {2}{p}-1}_{p,1}}
+\int^t_0(1+\e(\tau))\e(\tau)\ee(\tau)\,d\tau.
\end{align}

\subsection{Proof of Theorem \ref{dingli2}}
In this subsection, we shall give the proof of Theorem \ref{dingli2} by the local existence result and the continuation argument. Denote
\begin{align}\label{han22}
&\mathcal{X}(t)\stackrel{\mathrm{def}}{=}
\norm{(a^{\ell},\q\u^{\ell},{b}^{\ell})}_{\widetilde{L}^{\infty}_t(\dot{B}^{0}_{2,1})}
+\norm{(a^h,{b}^h)}_{\widetilde{L}^{\infty}_t(\dot{B}^{\frac {2}{p}}_{p,1})}
+\norm{(\p\u,\q\u^h)}_{\widetilde{L}^{\infty}_t(\dot{B}^{\frac {2}{p}-1}_{p,1})}
\notag\\
&\quad\quad\quad\quad+\norm{(\varphi^{\ell},\q\u^{\ell})}_{L^1_t(\dot{B}^{2}_{2,1})}
+\norm{\varphi^h}_{L^{1}_t(\dot{B}^{\frac {2}{p}}_{p,1})}
+\norm{(\p\u,\q\u^h)}_{L^1_t(\dot{B}^{\frac {2}{p}+1}_{p,1})},\notag\\
&\mathcal{X}_0\stackrel{\mathrm{def}}{=} \norm{(a^{\ell}_0,\q\u^{\ell}_0,{b}^{\ell}_0)}_{\dot{B}^{0}_{2,1}}
+\norm{(a^h_0,{b}^h_0)}_{\dot{B}^{\frac {2}{p}}_{p,1}}+\norm{(\p\u_0,\q\u^h_0)}_{\dot{B}^{\frac {2}{p}-1}_{p,1}}.
\end{align}
It follows from Lemmas \ref{daishu} and \ref{law}that
\begin{align}\label{tadi}
\norm{\varphi^{\ell}_0}_{\dot{B}^{0}_{2,1}}+\norm{\varphi^{h}_0}_{\dot{B}^{\frac {2}{p}}_{p,1}}
\lesssim&\norm{(a^{\ell}_0,b^{\ell}_0)}_{\dot{B}^{0}_{2,1}}+\norm{(a^{h}_0,b^{h}_0)}_{\dot{B}^{\frac {2}{p}}_{p,1}}\nn\\
&+\norm{(a^{\ell}_0,b^{\ell}_0)}_{\dot{B}^{0}_{2,1}}
(\norm{(a^{\ell}_0,b^{\ell}_0)}_{\dot{B}^{0}_{2,1}}
+\norm{(a^{h}_0,b^{h}_0)}_{\dot{B}^{\frac {2}{p}}_{p,1}})\nn\\
\lesssim&(1+\mathcal{X}_0)\mathcal{X}_0.
\end{align}
Now, summing up \eqref{ping246} and \eqref{er3356}, we get
\begin{align}\label{han23}
\mathcal{X}(t)\leq (1+\mathcal{X}_0)\mathcal{X}_0+C(\mathcal{X}(t))^2(1+C\mathcal{X}(t)).
\end{align}
Under the setting of initial data in Theorem \ref{dingli2},  there exists a positive constant $C_0$ such that
$\mathcal{X}_0\leq C_0 \epsilon$. Due to the local existence result which has been  achieved by Proposition \ref{dingli1}, there exists a positive time $T$ such that
\begin{equation}\label{re}
 \mathcal{X} (t) \leq 2 C_0\ \epsilon , \quad  \forall \; t \in [0, T].
\end{equation}
Let $T^{*}$ be the largest possible time of $T$ for what \eqref{re} holds. Now, we only need to show $T^{*} = \infty$.  By the estimate of \eqref{han23}, we can use
 a standard continuation argument to prove that $T^{*} = \infty$ provided that $\epsilon$ is small enough.  We omit the details here. This finishes the proof of Theorem \ref{dingli2}. $\quad\square$

\bigskip

\section{The proof of Theorem \ref{dingli3}}

In this section, we shall follow the method (independent of the spectral analysis) used in \cite{guoyan} and \cite{xujiang2021jde}  to get the decay rate of the solutions constructed in the previous section.
From the proof of Theorem \ref{dingli2}, we can get the following inequality (see the derivation of \eqref{ping24} and \eqref{er33} for more details):
 \begin{align}\label{sa1}
&\frac{d}{dt}(\norm{(\varphi,\u)}^\ell_{\dot{B}_{2,1}^{0}}
+\norm{\varphi}^h_{\dot{B}_{2,1}^{1}}+\norm{\u}^h_{\dot{B}_{2,1}^{0}})
+\norm{(\varphi,\u)}^\ell_{\dot{B}_{2,1}^{2}}
+\norm{\varphi}^h_{\dot{B}_{2,1}^{1}}+\norm{\u}^h_{\dot{B}_{2,1}^{2}}
\nonumber\\
&\quad\lesssim(1+\norm{(a,\u,b)}^\ell_{\dot{B}_{2,1}^{0}}
+\norm{(a,b)}^h_{\dot{B}_{2,1}^{1}}+\norm{\u}^h_{\dot{B}_{2,1}^{0}})\\
&\qquad\times(\norm{(a,\u,b)}^\ell_{\dot{B}_{2,1}^{0}}
+\norm{(a,b)}^h_{\dot{B}_{2,1}^{1}}+\norm{\u}^h_{\dot{B}_{2,1}^{0}})(\norm{(\varphi,\u)}^\ell_{\dot{B}_{2,1}^{2}}
+\norm{\varphi}^h_{\dot{B}_{2,1}^{1}}+\norm{\u}^h_{\dot{B}_{2,1}^{2}}).\nn
\end{align}
By Theorem \ref{dingli2},
\begin{align}\label{sa2}
&\norm{(a,\u,b)}^\ell_{\dot{B}_{2,1}^{0}}
+\norm{(a,b)}^h_{\dot{B}_{2,1}^{1}}+\norm{\u}^h_{\dot{B}_{2,1}^{0}}\leq c_0.
\end{align}
Inserting (\ref{sa2}) into  \eqref{sa1} yields
\begin{align}\label{sa3}
&\frac{d}{dt}(\norm{(\varphi,\u)}^\ell_{\dot{B}_{2,1}^{0}}
+\norm{\varphi}^h_{\dot{B}_{2,1}^{1}}+\norm{\u}^h_{\dot{B}_{2,1}^{0}})+\bar{c}(\norm{(\varphi,\u)}^\ell_{\dot{B}_{2,1}^{2}}
+\norm{\varphi}^h_{\dot{B}_{2,1}^{1}}+\norm{\u}^h_{\dot{B}_{2,1}^{2}})\le0.
\end{align}
In order to derive the decay estimate of the solutions given in Theorem \ref{dingli2}, we need  to get  a Lyapunov-type differential inequality from \eqref{sa3}. According to \eqref{sa2} and the embedding relation in the high frequency,
it's obvious  for any $\beta>0$ that
\begin{align}\label{sa52}
\norm{\varphi}^h_{\dot{B}_{2,1}^{1}}\ge C \big(\norm{\varphi}^h_{\dot{B}_{2,1}^{1}}\big)^{1+\beta},
\end{align}
and
\begin{align}\label{sa4}
\norm{\u}^h_{\dot{B}_{2,1}^{2}}
\ge C(\norm{\u}^h_{\dot{B}_{2,1}^{0}}
)^{1+\beta}.
\end{align}
Thus, to get the Lyapunov-type inequality of the solutions, we only need  to control the norm of $\norm{\varphi}^\ell_{\dot{B}_{2,1}^{2}}$.
This process can be obtained from  the fact that   the  solutions constructed in Theorem \ref{dingli2} can
propagate the regularity of the
initial data in Besov space with low regularity, see the following Proposition \ref{propagate}. This will ensure that one can use interpolation to get the desired Lyapunov-type inequality.

\begin{proposition}\label{propagate}
Let $(a,\u,b) $ be the  solutions constructed in Theorem \ref{dingli2} with $p=2$. Assume further that $(a_0^{\ell},\u_0^{\ell},b_0^{\ell})\in {\dot{B}^{{-\sigma}}_{2,\infty}}(\R^2)$ for some $0<\sigma\le1$.
Then there exists a constant $C_0>0$ depending on the norm of the initial data
such that for all $t\geq0$,
 \begin{eqnarray}\label{sa39-1}
\norm{(a,b,\u,\varphi)(t,\cdot)}^{\ell}_{\dot B^{{-\sigma}}_{2,\infty}}\leq C_0.
\end{eqnarray}
\end{proposition}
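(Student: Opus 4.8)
The plan is to propagate the negative Besov regularity frequency by frequency, using the parabolic (enhanced–dissipation) structure of the coupled system for $(\varphi,\q\u)$, the heat equation for $\p\u$, and the transport structure for $a$ and $b$, and then to close a single Gr\"onwall inequality for
\[
Y(t)\stackrel{\mathrm{def}}{=}\norm{(\varphi,\u,a,b)(t,\cdot)}^{\ell}_{\dot B^{-\sigma}_{2,\infty}}.
\]
First I would check that the data is admissible. Since $\varphi_0=a_0^2+2a_0+\f12 b_0^2+b_0$ and, for $p=2$, $(a_0,b_0)\in\dot B^{1}_{2,1}(\R^2)$ (because $(a_0^\ell,b_0^\ell)\in\dot B^0_{2,1}$, hence also in $\dot B^1_{2,1}$ at low frequencies, and $(a_0^h,b_0^h)\in\dot B^{1}_{2,1}$), while by hypothesis $(a_0^\ell,b_0^\ell)\in\dot B^{-\sigma}_{2,\infty}$, the product law of Lemma \ref{guangyidaishu} (together with the elementary fact that a high–frequency piece in $\dot B^1_{2,1}$ lies in $\dot B^{-\sigma}_{2,\infty}$ for $0<\sigma\le1$) gives $\varphi_0\in\dot B^{-\sigma}_{2,\infty}$, hence $\varphi_0^\ell\in\dot B^{-\sigma}_{2,\infty}$ and $\delta_0^\ell=\varphi_0^\ell-3a_0^\ell\in\dot B^{-\sigma}_{2,\infty}$; thus $Y(0)<\infty$.

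Next, set $d=-\Lambda^{-1}\div\u$, so that $(\varphi,d)$ solves \eqref{ping3}. Dividing the differential inequality \eqref{ping9} by $\mathcal L_k$ gives, for every $k\le k_0$, $\f{d}{dt}\mathcal L_k+c\,2^{2k}\mathcal L_k\lesssim\norm{\dot\Delta_k(f_1,f_2)}_{L^2}$; since $\mathcal L_k\thickapprox\norm{\dot\Delta_k(\varphi,d)}_{L^2}$ at low frequencies, Duhamel's formula yields
\[
\norm{\dot\Delta_k(\varphi,d)(t)}_{L^2}\lesssim e^{-c2^{2k}t}\norm{\dot\Delta_k(\varphi_0,d_0)}_{L^2}+\int_0^t e^{-c2^{2k}(t-\tau)}\norm{\dot\Delta_k(f_1,f_2)(\tau)}_{L^2}\,d\tau.
\]
Multiplying by $2^{-k\sigma}$, using $e^{-c2^{2k}(t-\tau)}\le1$ and taking the supremum over $k\le k_0$ (and exchanging the supremum with the time integral) gives
\[
\norm{(\varphi,d)(t)}^{\ell}_{\dot B^{-\sigma}_{2,\infty}}\lesssim\norm{(\varphi_0,d_0)}^{\ell}_{\dot B^{-\sigma}_{2,\infty}}+\int_0^t\norm{(f_1,f_2)(\tau)}^{\ell}_{\dot B^{-\sigma}_{2,\infty}}\,d\tau,
\]
and, $\q\u$ being a zeroth–order Fourier multiplier of $d$, this controls $\norm{\q\u}^{\ell}_{\dot B^{-\sigma}_{2,\infty}}$ as well. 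The same Duhamel argument applied to the heat equation \eqref{ping1} for $\p\u$, and to the transport equation \eqref{ggug12} for $\delta=\varphi-3a$ (and its analogue for $b$ coming from \eqref{333mm3}, which has the same structure) — the latter requiring a Bony–paraproduct commutator bound of the type of Lemma \ref{jiaohuanzi} adapted to $\dot B^{-\sigma}_{2,\infty}$ — combined with $a=\f13(\varphi-\delta)$, yields the matching control of $\norm{\p\u}^{\ell}_{\dot B^{-\sigma}_{2,\infty}}$ and $\norm{(a,b)}^{\ell}_{\dot B^{-\sigma}_{2,\infty}}$ in terms of $Y(0)$, $\norm{\varphi(t)}^\ell_{\dot B^{-\sigma}_{2,\infty}}$ and time integrals of the nonlinearities.

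The decisive step is then the uniform–in–time bilinear bound: all the forcing terms — $f_1$, $f_2$, $\u\cdot\nabla\u$, $\mathbf F(a,\u,\varphi)$, and the transport sources $\delta\,\div\u$, $\varphi\,\div\u$, $[\dot\Delta_j,\u\cdot\nabla]\delta$ — can be estimated by $g(\tau)\big(Y(\tau)+c_0\big)$ with $\int_0^\infty g(\tau)\,d\tau\lesssim\mathcal X(\infty)\lesssim c_0$, where $g$ collects factors such as $\norm{\u}_{\dot B^2_{2,1}}$, $\norm{\nabla\varphi^\ell}_{\dot B^1_{2,1}}$, $\norm{\varphi^h}_{\dot B^1_{2,1}}$ and $\norm{\div\u}_{\dot B^1_{2,1}}$, all of which lie in $L^1(\R_+)$ by Theorem \ref{dingli2}. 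What makes this close is structural: every nonlinear term carries a factor of $\u$, $\nabla\u$ or $\nabla\varphi$ — never a bare low–frequency $a^\ell$ or $\varphi^\ell$ in the ``unknown'' role without such a partner — so one may always place that factor in an $L^1_t$–controlled Besov space and the remaining factor in $\dot B^{-\sigma}_{2,\infty}$, after a careful low/high–frequency splitting of $a$, $\varphi$ and $\u$ and using Lemmas \ref{guangyidaishu}--\ref{law}, the composition bound for $I(a)$ (Lemma \ref{fuhe}), and the facts that, since $0<\sigma\le1$, at low frequencies a larger Besov index is dominated by $\dot B^{-\sigma}_{2,\infty}$ while at high frequencies the $\dot B^{-\sigma}_{2,\infty}$ norm is dominated by the small norms entering $\mathcal X$ (which is what produces the additive $c_0$). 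Summing the four estimates yields $Y(t)\lesssim Y(0)+c_0+\int_0^t g(\tau)\big(Y(\tau)+c_0\big)\,d\tau$ with $\int_0^\infty g<\infty$, and Gr\"onwall's lemma gives $Y(t)\le C_0$ for all $t\ge0$, i.e. \eqref{sa39-1}. I expect the genuine work to be exactly this bilinear bookkeeping in the endpoint space $\dot B^{-\sigma}_{2,\infty}$ — in particular establishing the commutator estimate there and treating the borderline value $\sigma=1$ — rather than anything conceptual.
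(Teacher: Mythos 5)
Your proposal is correct and mirrors the paper's proof in all essential respects: a Duhamel/Gr\"onwall argument on the low-frequency parabolic block $(\varphi,d)$ driven by \eqref{ping9}, a commutator estimate in $\dot B^{-\sigma}_{2,\infty}$ for the transported quantities $\delta=\varphi-3a$ and $b$, the endpoint product laws \eqref{key}--\eqref{key3} deduced from Lemma~\ref{guangyidaishu}, and a linear Gr\"onwall closure exploiting the fact that $\mathcal{D}_1\in L^1(\R_+)$ and $\mathcal{D}_\infty\lesssim c_0$ from Theorem~\ref{dingli2}. Your explicit inclusion of the heat-equation bound for $\p\u^\ell$ in $\dot B^{-\sigma}_{2,\infty}$ simply makes precise a step the paper leaves implicit when it writes $\u$ (rather than $\q\u$) in \eqref{sa6}.
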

\begin{proof}
It follows from \eqref{ping3+1} that
\begin{align}\label{sa5}
&\frac12\frac{d}{dt}\Big(\norm{\varphi_{k}^{\ell}}^2_{L^2}/3+\norm{{d}_k^{\ell}}^2_{L^2}\Big)+2\norm{\Lambda {d}_k^{\ell}}^2_{L^2}=\big\langle(f_{1})^{\ell}_k,\varphi^{\ell}_{k}/3\big\rangle+\big\langle(f_{2})^{\ell}_k ,{d}_k^{\ell}\big\rangle
\end{align}
By performing a routine procedure, one obtains
\begin{align}\label{sa6}
\norm{(\varphi,\u)}^{\ell}_{\dot{B}^{{-\sigma}}_{2,\infty}}
\lesssim\norm{(\varphi_0,\u_0)}^{\ell}_{\dot{B}^{{-\sigma}}_{2,\infty}}
+\int_0^t\norm{(f_1,f_2)}^{\ell}_{\dot{B}^{{-\sigma}}_{2,\infty}}\,d\tau.
\end{align}
To control $\norm{a}^{\ell}_{\dot{B}^{{-\sigma}}_{2,\infty}}$, we first
get by taking $\ddj$ on  both hand side of \eqref{ggug12} that
\begin{align}\label{sa7}
\partial_t\ddj \delta+\u\cdot\nabla \ddj \delta+[\ddj,\u\cdot\nabla] \delta =\ddj f_3
\end{align}
with $f_3\stackrel{\mathrm{def}}{=}\delta\,\div \u+\varphi\,\div \u$.

Then, we get by a similar derivation of \eqref{you2} that
\begin{align}\label{sa8}
\|\delta^{\ell}\|_{\dot{B}^{-\sigma}_{2,\infty}}
\lesssim&\| \delta_0^\ell\|_{\dot{B}^{-\sigma}_{2,\infty}} +\int_0^t\| \div\u\|_{L^\infty}\| \delta^\ell\|_{\dot{B}^{-\sigma}_{2,\infty}}\,d\tau\nn\\
&+\int_0^t\| \nabla\u\|_{\dot{B}^{1}_{2,1}}\| \delta\|_{\dot{B}^{-\sigma}_{2,\infty}}\,d\tau+\int_0^t\| f_3\|_{\dot{B}^{-\sigma}_{2,\infty}}\,d\tau
\end{align}
in which we have used the  Lemma 2.100 of \cite{bcd} to  deal withe the commutator.

With the aid of the embedding relation ${\dot B^{1}_{2,1}}(\R^2)\hookrightarrow L^\infty(\R^2)$ and Lemma \ref{jiaohuanzi}, we  have
\begin{align}\label{sa9}
\| \div\u\|_{L^\infty}\| \delta^\ell\|_{\dot{B}^{-\sigma}_{2,\infty}}+\| \nabla\u\|_{\dot{B}^{1}_{2,1}}\| \delta\|_{\dot{B}^{-\sigma}_{2,\infty}}
\lesssim&\| \nabla\u\|_{\dot{B}^{1}_{2,1}}(\| \delta^\ell\|_{\dot{B}^{-\sigma}_{2,\infty}}+\| \delta^h\|_{\dot{B}^{1}_{2,1}})\nn\\
\lesssim&\|\u\|_{\dot{B}^{2}_{2,1}}(\| \delta^\ell\|_{\dot{B}^{-\sigma}_{2,\infty}}+\| \delta^h\|_{\dot{B}^{1}_{2,1}})\nn\\
\lesssim&\|\u\|_{\dot{B}^{2}_{2,1}}(\| (a^\ell,\varphi^\ell)\|_{\dot{B}^{-\sigma}_{2,\infty}}+\| (a^h,\varphi^h)\|_{\dot{B}^{1}_{2,1}}).
\end{align}
For  the las term in \eqref{sa8}, we   use Lemma \ref{guangyidaishu} directly to get
\begin{align}\label{sa10}
\| f_3\|_{\dot{B}^{-\sigma}_{2,\infty}}
\lesssim&\| \nabla\u\|_{\dot{B}^{1}_{2,1}}\| (\delta,\varphi)\|_{\dot{B}^{-\sigma}_{2,\infty}}\nn\\
\lesssim&\|\u\|_{\dot{B}^{2}_{2,1}}(\| (a^\ell,\varphi^\ell)\|_{\dot{B}^{-\sigma}_{2,\infty}}+\| (a^h,\varphi^h)\|_{\dot{B}^{1}_{2,1}}).
\end{align}
Inserting \eqref{sa9} and \eqref{sa10} into \eqref{sa8} yields
\begin{align}\label{sa11}
\|\delta^{\ell}\|_{\dot{B}^{-\sigma}_{2,\infty}}
\lesssim&\| \delta_0^\ell\|_{\dot{B}^{-\sigma}_{2,\infty}} +\int_0^t\|\u\|_{\dot{B}^{2}_{2,1}}(\| (a^\ell,\varphi^\ell)\|_{\dot{B}^{-\sigma}_{2,\infty}}+\| (a^h,\varphi^h)\|_{\dot{B}^{1}_{2,1}})\,d\tau
\end{align}
from which and \eqref{sa6}, we obtain
\begin{align}\label{sa12}
\norm{(a,\varphi,\u)}^{\ell}_{\dot{B}^{{-\sigma}}_{2,\infty}}
\lesssim&\norm{(a_0,\varphi_0,\u_0)}^{\ell}_{\dot{B}^{{-\sigma}}_{2,\infty}}
+\int_0^t\norm{(f_1,f_2)}^{\ell}_{\dot{B}^{{-\sigma}}_{2,\infty}}\,d\tau\nn\\
&+\int_0^t\|\u\|_{\dot{B}^{2}_{2,1}}(\| (a^\ell,\varphi^\ell)\|_{\dot{B}^{-\sigma}_{2,\infty}}+\| (a^h,\varphi^h)\|_{\dot{B}^{1}_{2,1}})\,d\tau.
\end{align}
In the same manner, we also can get
\begin{align}\label{sa13}
\norm{(b,\varphi,\u)}^{\ell}_{\dot{B}^{{-\sigma}}_{2,\infty}}
\lesssim&\norm{(b_0,\varphi_0,\u_0)}^{\ell}_{\dot{B}^{{-\sigma}}_{2,\infty}}
+\int_0^t\norm{(f_1,f_2)}^{\ell}_{\dot{B}^{{-\sigma}}_{2,\infty}}\,d\tau\nn\\
&+\int_0^t\|\u\|_{\dot{B}^{2}_{2,1}}(\| (b^\ell,\varphi^\ell)\|_{\dot{B}^{-\sigma}_{2,\infty}}+\| (b^h,\varphi^h)\|_{\dot{B}^{1}_{2,1}})\,d\tau
\end{align}
which combines with \eqref{sa12} leads to
\begin{align}\label{sa14}
\norm{(a,b,\varphi,\u)}^{\ell}_{\dot{B}^{{-\sigma}}_{2,\infty}}
\lesssim&\norm{(a_0,b_0,\varphi_0,\u_0)}^{\ell}_{\dot{B}^{{-\sigma}}_{2,\infty}}
+\int_0^t\norm{(f_1,f_2)}^{\ell}_{\dot{B}^{{-\sigma}}_{2,\infty}}\,d\tau\nn\\
&+\int_0^t\|\u\|_{\dot{B}^{2}_{2,1}}(\| (a^\ell,b^\ell,\varphi^\ell)\|_{\dot{B}^{-\sigma}_{2,\infty}}+\| (a^h,b^h,\varphi^h)\|_{\dot{B}^{1}_{2,1}})\,d\tau.
\end{align}

To bound the nonlinear terms in $f_1, f_2$, we need  the following two crucial
estimates which can be obtained from Lemma \ref{guangyidaishu} directly.

\begin{align}\label{key}
\bullet\qquad\|fg\|_{\dot{B}_{2,\infty}^{{-{s}}}}^\ell
\lesssim&\|f\|_{\dot{B}_{2,1}^{1}}
\|g\|_{\dot{B}_{2,\infty}^{{-{s}}}},\quad -1<{{s}}\le1,
\end{align}
\begin{align}\label{key3}
\bullet\qquad\|fg\|_{\dot{B}_{2,\infty}^{{-{s}}}}^\ell
\lesssim&\|f\|_{\dot{B}_{2,1}^{0}}
\|g\|_{\dot{B}_{2,\infty}^{{-{s}}+1}},\quad 0<{{s}}\le1.
\end{align}
To simplify the notation, we set
\begin{align*}
&\mathcal{D}_\infty(t)\stackrel{\mathrm{def}}{=}\norm{(a,\u,b)}^\ell_{\dot{B}_{2,1}^{0}}
+\norm{(a,b)}^h_{\dot{B}_{2,1}^{1}}+\norm{\u}^h_{\dot{B}_{2,1}^{0}},
\nonumber\\
&\mathcal{D}_1(t)\stackrel{\mathrm{def}}{=}\norm{(\varphi,\u)}^\ell_{\dot{B}_{2,1}^{2}}
+\norm{\varphi}^h_{\dot{B}_{2,1}^{1}}+\norm{\u}^h_{\dot{B}_{2,1}^{2}}
.
\end{align*}
From \eqref{key}, one has
\begin{align}\label{asa15}
&\norm{\u\cdot\nabla \varphi^{\ell}}^{\ell}_{\dot{B}^{{-\sigma}}_{2,\infty}}+\norm{\varphi\,\mathrm{div}\,\u^{\ell}}^{\ell}_{\dot{B}^{{-\sigma}}_{2,\infty}}
\nonumber\\&\quad
\lesssim\norm{\u^\ell}_{\dot{B}^{-\sigma}_{2,\infty}} \norm{\nabla \varphi^\ell}_{\dot{B}^1_{2,1}}
+\norm{\u}^h_{\dot B^1_{2,1}}\norm{\nabla \varphi^{\ell}}_{\dot B^{-\sigma}_{2,\infty}}
+\norm{\varphi^\ell}_{\dot B^{-\sigma}_{2,\infty}}\norm{\mathrm{div}\,\u^\ell}_{\dot B^1_{2,1}}
\nonumber \\
&\qquad +\norm{\varphi^h}_{\dot B^1_{2,1}}\norm{\mathrm{div}\,\u^{\ell}}_{\dot B^{-\sigma}_{2,\infty}}\nonumber\\
&\quad\lesssim\norm{\varphi^{\ell}}_{\dot{B}^{2}_{2,1}}\norm{\u^{\ell}}_{\dot{B}^{{-\sigma}}_{2,\infty}}+\norm{\u}^{h}_{\dot{B}^{2}_{2,1}}\norm{\varphi^{\ell}}_{\dot{B}^{{-\sigma}}_{2,\infty}}
+\norm{\u}^{\ell}_{\dot{B}^{2}_{2,1}}\norm{\varphi}^{\ell}_{\dot{B}^{{-\sigma}}_{2,\infty}}+\norm{\varphi}^{h}_{\dot{B}^{1}_{2,1}}\norm{\u}^{\ell}_{\dot{B}^{{-\sigma}}_{2,\infty}}\nonumber\\
&\quad\lesssim\er\norm{(\varphi^{\ell},\u^{\ell})}_{\dot{B}^{{-\sigma}}_{2,\infty}}.
\end{align}
Thanks to \eqref{key3}, we have
\begin{align}\label{asa16}
&\norm{\u\cdot\nabla \varphi^h}_{\dot B^{{-\sigma}}_{2,\infty}}^\ell+\norm{\varphi\mathrm{div}\u^{h}}_{\dot B^{{-\sigma}}_{2,\infty}}^\ell
\nonumber\\
&\quad\lesssim \norm{\u}^{\ell}_{\dot B^{{-\sigma}+1}_{2,\infty}}\norm{\nabla \varphi}^h_{\dot B^{0}_{2,1}}+\norm{\nabla \varphi}^h_{\dot B^{0}_{2,1}}\norm{\u}^h_{\dot B^{1}_{2,1}}
 +(\norm{\varphi}^\ell_{\dot B^{0}_{2,1}}+\norm{\varphi}^h_{\dot B^{0}_{2,1}})\norm{\mathrm{div}\u}^h_{\dot B^{1}_{2,1}}\nonumber
\\
&\quad\lesssim \norm{\varphi}^h_{\dot B^{1}_{2,1}}\norm{\u}^{\ell}_{\dot B^{{-\sigma}}_{2,\infty}}+(\norm{\varphi}^{\ell}_{\dot B^{0}_{2,1}}+\norm{\varphi}^h_{\dot B^{1}_{2,1}})\norm{\u}^h_{\dot B^{2}_{2,1}}\nn\\
&\quad\lesssim\er\norm{\u^{\ell}}_{\dot{B}^{{-\sigma}}_{2,\infty}}+\mathcal{D}_\infty(t)\er
\end{align}
 which, together with \eqref{asa15},
gives
\begin{align}\label{asa18}
\norm{f_1}^{\ell}_{\dot{B}^{{-\sigma}}_{2,\infty}}
\lesssim\er\norm{(\varphi^{\ell},\u^{\ell})}_{\dot{B}^{{-\sigma}}_{2,\infty}}+\mathcal{D}_\infty(t)\er.
\end{align}
Next, we  bound the terms in $f_2$. The estimate of $\u\cdot\nabla \u$ follows from essentially the same procedures as $\mathrm{div}(\varphi\u)$ so that
\begin{align}\label{asa19}
\norm{\u\cdot \nabla \u}^{\ell}_{\dot{B}^{{-\sigma}}_{2,\infty}}\lesssim&\norm{\u\cdot \nabla \u^{\ell}}^{\ell}_{\dot{B}^{{-\sigma}}_{2,\infty}}+\norm{\u\cdot \nabla \u^{h}}_{\dot B^{{-\sigma}}_{2,\infty}}^\ell \nn\\
\lesssim &\big(\norm{\u}^{\ell}_{\dot{B}^{2}_{2,1}}+\norm{\u}^{h}_{\dot{B}^{2}_{2,1}}\big)\norm{\u}^{\ell}_{\dot{B}^{{-\sigma}}_{2,\infty}}+\big(\norm{\u}^{\ell}_{\dot B^{0}_{2,1}}+\norm{\u}^{h}_{\dot B^{0}_{2,1}}\big)\norm{\u}^{h}_{\dot B^{2}_{2,1}}\nn\\
\lesssim &\er\norm{\u^{\ell}}_{\dot{B}^{{-\sigma}}_{2,\infty}}+\mathcal{D}_\infty(t)\er.
\end{align}
For the term $I(a)(\Delta \u+\nabla\div \u)$, it follows from  \eqref{key3} that
\begin{align}\label{asa20}
\|I(a)(\Delta \u+\nabla\div \u)\|_{\dot{B}_{2,\infty}^{{-\sigma}}}^\ell
\lesssim&\|\Delta \u+\nabla\div \u\|_{\dot{B}_{2,1}^{0}}
\|I(a)\|_{\dot{B}_{2,\infty}^{{-\sigma}+1}}
\nn\\
\lesssim&\| \u\|_{\dot{B}_{2,1}^{2}}
(\|(I(a))^\ell\|_{\dot{B}_{2,\infty}^{{-\sigma}+1}}+\|(I(a))^h\|_{\dot{B}_{2,\infty}^{{-\sigma}+1}})\nn\\
\lesssim&(\| \u^\ell\|_{\dot{B}_{2,1}^{2}}+\| \u^h\|_{\dot{B}_{2,1}^{2}})
(\|(I(a))^\ell\|_{\dot{B}_{2,1}^{0}}+\|(I(a))^h\|_{\dot{B}_{2,1}^{1}}).
\end{align}
In view of  the previous estimates \eqref{zijie1} and \eqref{zijie2}, there holds
\begin{align}\label{asa21}
\|(I(a))^\ell\|_{\dot{B}_{2,1}^{0}}+\|(I(a))^h\|_{\dot{B}_{2,1}^{1}}
\lesssim(1+\es)\es.
\end{align}
Hence, we infer from \eqref{asa20} that
\begin{align}\label{asa22}
\|I(a)(\Delta \u+\nabla\div \u)\|_{\dot{B}_{2,\infty}^{{-\sigma}}}^\ell
\lesssim&(1+\es)\es\er.
\end{align}
For the last term in $f_2$, we exploit \eqref{key} and \eqref{key3}  to get
\begin{align}\label{asa23}
\|I(a)\nabla\varphi\|_{\dot{B}_{2,\infty}^{{-\sigma}}}^\ell
\lesssim&\|I(a)\|_{\dot{B}_{2,\infty}^{{-\sigma}}}\|\nabla\varphi^\ell\|_{\dot{B}_{2,1}^{1}}
+\|I(a)\|_{\dot{B}_{2,\infty}^{{-\sigma+1}}}\|\nabla\varphi^h\|_{\dot{B}_{2,1}^{0}}\nn\\
\lesssim&\|I(a)\|_{\dot{B}_{2,\infty}^{{-\sigma}}}\|\varphi^\ell\|_{\dot{B}_{2,1}^{2}}
+\|I(a)\|_{\dot{B}_{2,\infty}^{{-\sigma+1}}}\|\varphi^h\|_{\dot{B}_{2,1}^{1}}.
\end{align}
Keeping in mind that  $I(a)=a-aI(a),$ we use Lemma \ref{guangyidaishu} and \eqref{zijie1}, \eqref{zijie2} to write
\begin{align}\label{asa24}
\|I(a)\|_{\dot{B}_{2,\infty}^{{-\sigma}}}
\lesssim&\|a\|_{\dot{B}_{2,\infty}^{{-\sigma}}}+\|aI(a)\|_{\dot{B}_{2,\infty}^{{-\sigma}}}\nn\\
\lesssim&\|a\|_{\dot{B}_{2,\infty}^{{-\sigma}}}+\|a\|_{\dot{B}_{2,\infty}^{{-\sigma}}}\|I(a)\|_{\dot{B}_{2,1}^{1}} \nn\\
\lesssim&(1+\|(I(a))^\ell\|_{\dot{B}_{2,1}^{0}}+\|(I(a))^h\|_{\dot{B}_{2,1}^{1 }})(\|a^\ell\|_{\dot{B}_{2,\infty}^{{-\sigma}}}+\|a^h\|_{\dot{B}_{2,1}^{1}})\nn\\
\lesssim&(1+(\es)^2)\|a^\ell\|_{\dot{B}_{2,\infty}^{{-\sigma}}}+(1+(\es)^2)\es.
\end{align}
Similar to previous estimate, one has
\begin{align}\label{asa25}
\|I(a)\|_{\dot{B}_{2,\infty}^{{-\sigma+1}}}\lesssim\|(I(a))^\ell\|_{\dot{B}_{2,1}^{0}}+\|(I(a))^h\|_{\dot{B}_{p,1}^{\frac {2}{p}}}\lesssim(1+\es)\es.
\end{align}
Taking \eqref{asa24} and \eqref{asa25} into \eqref{asa23} gives
\begin{align}\label{asa26}
\|I(a)\nabla\varphi\|_{\dot{B}_{2,\infty}^{{-\sigma}}}^\ell
\lesssim&(1+(\es)^2)\er\|a^\ell\|_{\dot{B}_{2,\infty}^{{-\sigma}}}+(1+(\es)^2)\es\er.
\end{align}
Collecting the  estimates  \eqref{asa19}, \eqref{asa22},  and \eqref{asa26}, we obtain
\begin{align}\label{asa27}
\|f_2\|^{\ell}_{\dot{B}^{{-\sigma}}_{2,\infty}}
\lesssim&(1+(\es)^2)\er\|(a^\ell,\u^\ell)\|_{\dot{B}_{2,\infty}^{{-\sigma}}}+(1+\mathcal{D}_\infty(t))\mathcal{D}_\infty(t)\er.
\end{align}
Plugging   \eqref{asa18} and \eqref{asa27} into \eqref{sa14}, we finally arrive at
\begin{align}\label{asa28}
\norm{(a,b,\varphi,\u)}^{\ell}_{\dot{B}^{{-\sigma}}_{2,\infty}}
\lesssim&\norm{(a_0,b_0,\varphi_0,\u_0)}^{\ell}_{\dot{B}^{{-\sigma}}_{2,\infty}}
+\int_0^t(1+\mathcal{D}_\infty(\tau))\mathcal{D}_\infty(\tau)\mathcal{D}_1(\tau) \,d\tau\nonumber\\
&\quad\quad+\int_0^t
(1+(\mathcal{D}_\infty(\tau))^2)\mathcal{D}_1(\tau)\|(a,b,\varphi,\u)\|^{\ell}_{\dot{B}^{{-\sigma}}_{2,\infty}}\,d\tau.
\end{align}
Noticing that the definition of $\varphi_0$ in \eqref{mm3}, it is easy to deduce from Lemma \ref{guangyidaishu} that
\begin{align*}
\norm{\varphi_0}^{\ell}_{\dot{B}^{{-\sigma}}_{2,\infty}}
\lesssim&\norm{(a_0,b_0)}^{\ell}_{\dot{B}^{{-\sigma}}_{2,\infty}}+\norm{(a_0,b_0)}_{\dot{B}^{{-\sigma}}_{2,\infty}}
\norm{(a_0,b_0)}_{\dot{B}^{{1}}_{2,1}}\nonumber\\
\lesssim&\norm{(a_0,b_0)}^{\ell}_{\dot{B}^{{-\sigma}}_{2,\infty}}+(\norm{(a_0,b_0)}^{\ell}_{\dot{B}^{{-\sigma}}_{2,\infty}}+\norm{(a_0^h,b_0^h)}_{\dot{B}^{{1}}_{2,1}})
(\norm{(a_0^\ell,b_0^\ell)}_{\dot{B}^{{0}}_{2,1}}+\norm{(a_0^h,b_0^h)}_{\dot{B}^{{1}}_{2,1}}).
\end{align*}
Consequently, one can employ nonlinear generalisations of the Gronwall's inequality
to get
\begin{eqnarray}\label{sa39}
\norm{(a,b,\u,\varphi)(t,\cdot)}^{\ell}_{\dot B^{{-\sigma}}_{2,\infty}}\leq C_0
\end{eqnarray}
for all $t\geq0$, where $C_0>0$ depends on the norm of  $a_0,b_0,\u_0$.
This completes the proof of Proposition \ref{propagate}.
\end{proof}

\medskip
Now, we prove the  Lyapunov-type  inequality from \eqref{sa3}.
For any $0<{\sigma}\le1,$
it follows from an interpolation inequality that
\begin{align*}
\norm{(\varphi,\u)}^\ell_{\dot{B}_{2,1}^{0}}
\le&C\big(\norm{(\varphi,\u)}_{\dot{B}_{2,\infty}^{{-\sigma}}}\big)^{\alpha_1}\big(\norm{(\varphi,\u)}^\ell_{\dot{B}_{2,1}^{2}}\big)^{1-\alpha_1},
\quad \alpha_1=\frac{2}{2+{\sigma}}\in(0,1),
\end{align*}
which, together with Proposition \ref{propagate},  implies
\begin{align}\label{sa51}
\norm{(\varphi,\u)}^\ell_{\dot{B}_{2,1}^{2}}\ge  c_0\big(\norm{(\varphi,\u)}^\ell_{\dot{B}_{2,1}^{0}}\big)^{\frac{1}{1-\alpha_1}}.
\end{align}
Taking $\beta=1+\alpha_1>0$ in
\eqref{sa52} and \eqref{sa4} and combining with   \eqref{sa51}, we deduce from \eqref{sa3}
that
\begin{align}\label{sa58}
&\frac{d}{dt}(\norm{(\varphi,\u)}^\ell_{\dot{B}_{2,1}^{0}}
+\norm{\varphi}^h_{\dot{B}_{2,1}^{1}}+\norm{\u}^h_{\dot{B}_{2,1}^{0}})\nn\\
&\quad+\widetilde{c}_0(\norm{(\varphi,\u)}^\ell_{\dot{B}_{2,1}^{0}}
+\norm{\varphi}^h_{\dot{B}_{2,1}^{1}}+\norm{\u}^h_{\dot{B}_{2,1}^{0}})^{1+\frac{2}{{\sigma}}}\le0.
\end{align}
Solving this differential inequality directly, we obtain
\begin{align}\label{sa59}
\norm{(\varphi,\u)}^\ell_{\dot{B}_{2,1}^{0}}
+\norm{\varphi}^h_{\dot{B}_{2,1}^{1}}+\norm{\u}^h_{\dot{B}_{2,1}^{0}}
\le C(1+t)^{-\frac{{\sigma}}{2}}.
\end{align}
For any ${-\sigma}<\gamma_1<0,$ by the interpolation inequality, we have
\begin{align*}
\norm{(\varphi,\u)}^\ell_{\dot{B}_{2,1}^{\gamma_1}}
\le&C\big(\norm{(\varphi,\u)}^\ell_{\dot{B}_{2,\infty}^{{-\sigma}}}\big)^{\alpha_2} \big(\norm{(\varphi,\u)}^\ell_{\dot{B}_{2,1}^{0}}\big)^{1-\alpha_2},\quad \alpha_2=-\frac{\gamma_1}{{\sigma}}\in (0,1),
\end{align*}
which, together  with  Proposition \ref{propagate}, gives
\begin{align}\label{sa60}
\norm{(\varphi,\u)}^\ell_{\dot{B}_{2,1}^{\gamma_1}}
\le C(1+t)^{\frac{{-\sigma}(1-\alpha_2)}{2}}=C(1+t)^{-\frac{\gamma_1+{\sigma}}{2}}.
\end{align}
In  the light of
${-\sigma}<\gamma_1<0,$
 we see that
$$\norm{(\varphi^h,\u^h)}_{\dot{B}_{2,1}^{\gamma_1}}\le C(\norm{\varphi}^h_{\dot{B}_{2,1}^{1}}+\norm{\u}^h_{\dot{B}_{2,1}^{0 }})\le C(1+t)^{-\frac{{\sigma}}{2}},
$$
which, together with \eqref{sa60}, yields
\begin{align*}
\norm{(\varphi,\u)}_{\dot{B}_{2,1}^{\gamma_1}}
\le&C(\norm{(\varphi,\u)}^\ell_{\dot{B}_{2,1}^{\gamma_1}}+\norm{(\varphi,\u)}^h_{\dot{B}_{2,1}^{\gamma_1}})\nonumber\\
\le& C(1+t)^{-\frac{\gamma_1{+\sigma}}{2}}+C(1+t)^{-\frac{{\sigma}}{2}}\nonumber\\
\le& C(1+t)^{-\frac{\gamma_1{+\sigma}}{2}}.
\end{align*}
Hence,
thanks to the embedding relation
$\dot{B}^{0}_{2,1}(\R^2)\hookrightarrow L^2(\R^2)$, one infers that
\begin{align*}
\norm{\Lambda^{\gamma_1} (\varphi,\u)}_{L^2}
\le& C(1+t)^{-\frac{\gamma_1{+\sigma}}{2}}.
\end{align*}
This completes the proof of Theorem \ref{dingli3}. $\quad \square$

\bigskip
\section*{Acknowledgments}
Dong was partially supported by the National Natural Science Foundation of China under grant 11871346, the NSF of Guangdong Province under grant 2020A1515010530, NSF of Shenzhen City (Nos.JCYJ20180305125554234, 20200805101524001).
Wu was partially supported by the National Science Foundation of the USA under grant DMS 2104682, the Simons Foundation grant (Award number 708968) and the AT\&T Foundation at Oklahoma State University. Zhai was partially supported by the National Natural Science Foundation of China under grant11601533, the Guangdong Provincial Natural Science Foundation under grant 2022A1515011977 and the Science and Technology Program of Shenzhen under grant  20200806104726001.

  \bigskip

\noindent{\bf Data Availability Statement}: Our manuscript has no associated data.

\end{document}